\newtheorem{Theorem}{Theorem}[section]
\newtheorem{Proposition}[Theorem]{Proposition}
\newtheorem{Lemma}[Theorem]{Lemma}
\newtheorem{Corollary}[Theorem]{Corollary}
\theoremstyle{definition}
\newtheorem{Definition}[Theorem]{Definition}
\newtheorem{Question}[Theorem]{Question}
\newcommand{\rca}{\mathsf{RCA}_0}
\newcommand{\wkl}{\mathsf{WKL}_0}
\newcommand{\aca}{\mathsf{ACA}_0}
\newcommand{\atr}{\mathsf{ATR}_0}
\newcommand{\pica}{\Pi^1_1\mbox{-}\mathsf{CA}_0}
\newcommand{\iso}{\mathsf{I}\Sigma^0_1}
\DeclareMathOperator{\dom}{\mathrm{dom}}
\newcommand{\andd}{\wedge}
\newcommand{\la}{\langle}
\newcommand{\ra}{\rangle}
\newcommand{\imp}{\rightarrow}
\newcommand{\biimp}{\leftrightarrow}
\newcommand{\Nb}{\mathbb{N}}
\newcommand{\Qb}{\mathbb{Q}}
\newcommand{\Rb}{\mathbb{R}}
\newcommand{\cfp}{\mathrm{CFP}}
\newcommand{\field}[1]{{|{#1}|}}
\newcommand{\sel}{\langle}
\newcommand{\ser}{\rangle}
\newcommand{\vcode}[1]{\stackrel {#1}\rightharpoondown}
\newcommand{\fcode}[1]{\stackrel {#1}\to}
\newcommand{\ballsub}{\subsetplus}
\newcommand{\ballseq}{\subsetpluseq}
\newcommand{\secsub}{\sqsubset}
\newcommand{\secseq}{\sqsubseteq}
\newcommand{\ball}[2]{B_{#2}(#1)}
\newcommand{\spc}[1]{\mathcal{#1}}
   \def\MR#1{}
\title{Metric fixed point theory and partial impredicativity}
\author{David Fern\'andez-Duque}
\address{Department of Mathematics WE16\\
Ghent University\\
Ghent\\
Belgium
and
Institute of Computer Science\\
Czech Academy of Sciences\\
Prague\\
Czech Republic}
\email{fernandez@cs.cas.cz}
\urladdr{\url{https://users.ugent.be/~dfernnde/}}
\author{Paul Shafer}
\address{School of Mathematics\\
University of Leeds\\
Leeds\\
UK}
\email{p.e.shafer@leeds.ac.uk}
\urladdr{\url{http://www1.maths.leeds.ac.uk/~matpsh/}}
\author{Henry Towsner}
\address{Department of Mathematics,
University of Pennsylvania\\
Philadelphia\\
USA
}
\email{htowsner@math.upenn.edu}
\urladdr{\url{https://www.sas.upenn.edu/~htowsner/}}
\author{Keita Yokoyama}
\address{Mathematical Institute\\
Tohoku University\\
Sendai\\
Japan
}
\email{keita.yokoyama.c2@tohoku.ac.jp}
\urladdr{\url{http://www.math.tohoku.ac.jp/english/people/yokoyama-e.html}}
\keywords{computability theory, reverse mathematics, second-order arithmetic, fixed-point theorems, variational principles}
\begin{document}

\begin{abstract}
We show that the Priess-Crampe \& Ribenboim fixed point theorem is provable in $\rca$.  Furthermore, we show that Caristi's fixed point theorem for both Baire and Borel functions is equivalent to the transfinite leftmost path principle, which falls strictly between $\atr$ and $\pica$.  We also exhibit several weakenings of Caristi's theorem that are equivalent to $\wkl$ and to $\aca$.
\end{abstract}

\maketitle

\section{Introduction}

Metric fixed point theorems state that, under certain conditions, a
function $f\colon {\spc X}\to {\spc X}$ from a metric space to itself has a fixed
point, i.e.~there is an $x_* \in {\spc X}$ such that $f(x_*) = x_*$.
Such theorems have many applications in geometry, partial differential equations, etc., where  the function $f$ is typically continuous.
 
One fixed point theorem that does {\em not} require the continuity of $f$ is Caristi's fixed point theorem~\cite{Caristi}.
Instead, the function $f$ is `controlled' by a non-negative lower semi-continuous function.
Specifically, it applies to what we will call `Caristi systems'.
As is typical, we will notationally identify a metric space $(\spc X,d)$ with $\spc X$.
Recall that $ V\colon \spc X\to\Rb$ is lower semi-continuous if whenever $x_n\to x$, it follows that $V(x) \leq \displaystyle\liminf_{n\to\infty}V(x_n )$. 

\begin{Definition}
A {\em Caristi system} is a tuple $(\spc X,f,V)$, where $\spc X$ is a complete metric space, $f\colon \spc X\to \spc X$ is arbitrary, $V\colon \spc X\to [0,\infty)$ is lower semi-continuous, and for all $x\in \spc X$, $d(x,f(x))\leq V(x)-V(f(x))$.
\end{Definition}

\begin{Theorem}[Caristi~\cite{Caristi}]\label{TheoWeakCaristi}
Every Caristi system $(\spc X,f)$ has a fixed point; i.e.~there is $x_* \in \spc X$ such that $f(x_*)=x_*$.
\end{Theorem}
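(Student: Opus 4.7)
My plan is to prove Theorem~\ref{TheoWeakCaristi} by the classical Br{\o}ndsted argument: introduce a partial order on $\spc X$ in which the Caristi hypothesis says exactly $x \peq f(x)$ for every $x$, produce a maximal element, and observe that maximality together with $x_* \peq f(x_*)$ forces $f(x_*) = x_*$. Concretely, define $x \peq y$ iff $d(x,y) \leq V(x) - V(y)$. Reflexivity is immediate; antisymmetry follows because $x \peq y$ and $y \peq x$ give $d(x,y) = 0$; and transitivity reduces to adding the two defining inequalities and applying the triangle inequality, since if $x \peq y$ and $y \peq z$ then $d(x,z) \leq d(x,y) + d(y,z) \leq (V(x)-V(y)) + (V(y)-V(z)) = V(x) - V(z)$. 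The Caristi condition on $f$ translates directly to $x \peq f(x)$.

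Next I would produce a maximal element by sequential approximation. Start with any $x_0 \in \spc X$ and, given $x_n$, set $\alpha_n = \inf\{V(y) : x_n \peq y\}$ and pick $x_{n+1}$ with $x_n \peq x_{n+1}$ and $V(x_{n+1}) < \alpha_n + 2^{-n}$. The sequence $V(x_n)$ is nonnegative and nonincreasing, so it converges, and the telescoping estimate $d(x_n, x_m) \leq V(x_n) - V(x_m)$ (for $n \leq m$, via transitivity of $\peq$) shows that $(x_n)$ is Cauchy. Completeness provides a limit $x_*$. Passing to $m \to \infty$ in the same estimate, and using lower semi-continuity to bound $V(x_*) \leq \liminf_m V(x_m)$, yields $x_n \peq x_*$ for every $n$.

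To verify that $x_*$ is $\peq$-maximal, suppose $x_* \peq y$. Transitivity gives $x_n \peq y$, so $V(y) \geq \alpha_n$; since $\alpha_n \leq V(x_{n+1}) \leq \alpha_n + 2^{-n}$, the $\alpha_n$ converge to $\lim V(x_n)$, and the relations $x_n \peq x_*$ force $V(x_*) = \lim V(x_n)$. Therefore $V(y) \geq V(x_*)$, and then $x_* \peq y$ gives $d(x_*, y) \leq V(x_*) - V(y) \leq 0$, so $y = x_*$. Since $x_* \peq f(x_*)$ by hypothesis, this maximality yields $f(x_*) = x_*$.

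The delicate point is the selection of the sequence $(x_n)$: each $x_{n+1}$ is chosen from a set defined via the Br{\o}ndsted order, which itself refers to $V$, and one must approximate an infimum over this set. Classically this is a benign use of countable choice, but in the reverse-mathematics setting of the paper this selection step is exactly where the descriptive complexity of $V$ (lower semi-continuous, Baire, Borel, \dots) should control which set-existence axioms are needed, and I would expect it to be the principal obstacle in pinning down the axiomatic strength of the theorem.
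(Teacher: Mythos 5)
Your argument is correct and is essentially the route the paper takes: the maximal element of your Br{\o}ndsted order is exactly a critical point of $V$ in the sense of Ekeland's variational principle (Theorem~\ref{TheoEkeland}), from which the paper derives Theorem~\ref{TheoWeakCaristi} by the same observation that $x_* \peq f(x_*)$ together with maximality forces $f(x_*)=x_*$. The only difference is that you prove the Ekeland step (via the standard sequential construction) where the paper cites it, and your closing remark that the selection of $(x_n)$ is where the set-existence strength hides is consistent with the paper's point that Ekeland's principle is equivalent to $\pica$.
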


Caristi's theorem has various applications and generalizations in metric fixed point theory~\cites{CaristiApps,Kirk2003}.
We think of $V\colon \spc X\to [0,\infty)$ as a `potential', which diminishes after applying $f$; intuitively, after enough applications of $f$, no more potential is lost and we have reached a fixed point.
In Caristi's original proof, $f$ is iterated transfinitely, but this can be avoided by using Ekeland's variational principle.

\begin{Theorem}[Ekeland~\cite{ekeland1974}]\label{TheoEkeland}
Let $\spc X$ be a complete metric space and $V\colon \spc X\to [0,\infty)$ be lower semi-continuous.
Then there is an $x_*\in \spc X$ such that for all $x\in \spc X$, $ d(x_*,x )\leq V(x_*)-V(x ) $ implies that $x = x_*$.
\end{Theorem}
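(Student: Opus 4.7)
The plan is to recast the statement in terms of the Br\o{}ndsted partial order $\peq$ on $\spc X$ defined by $y \peq x \Biimp d(x,y) \leq V(x) - V(y)$. Reflexivity is immediate; antisymmetry holds because $x \peq y$ and $y \peq x$ together force $V(x)=V(y)$ and then $d(x,y)=0$; and transitivity follows from the triangle inequality combined with telescoping of the $V$-differences. Note that $y \peq x$ implies $V(y) \leq V(x)$, so the order is compatible with $V$, and a $\peq$-minimal element $x_*$ is exactly an $x_*$ as required by the theorem.

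I would construct a minimal element by iteration. Fix any $x_0 \in \spc X$ and, given $x_n$, set $S_n = \{ y \in \spc X : y \peq x_n \}$ and $v_n = \inf\{V(y) : y \in S_n\}$, which is a non-negative real since $V \geq 0$. Choose $x_{n+1} \in S_n$ with $V(x_{n+1}) \leq v_n + 2^{-n}$. Since $x_{n+1} \peq x_n$ we have $V(x_{n+1}) \leq V(x_n)$, so $(V(x_n))_{n \in \Nb}$ decreases to some $v \geq 0$. Transitivity of $\peq$ gives $x_m \peq x_n$ whenever $m > n$, so $d(x_n,x_m) \leq V(x_n) - V(x_m) \leq V(x_n) - v$, which tends to $0$. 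Hence $(x_n)_n$ is Cauchy and, by completeness of $\spc X$, converges to some $x_* \in \spc X$.

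Lower semi-continuity of $V$ gives $V(x_*) \leq \liminf_n V(x_n) = v$. Passing to the limit as $m \to \infty$ in $d(x_n, x_m) \leq V(x_n) - V(x_m)$ yields $d(x_n, x_*) \leq V(x_n) - v \leq V(x_n) - V(x_*)$, so $x_* \peq x_n$ for every $n$. To conclude, suppose $y \peq x_*$; by transitivity $y \peq x_n$, so $y \in S_n$ and $V(y) \geq v_n \geq V(x_{n+1}) - 2^{-n}$. Letting $n \to \infty$ we obtain $V(y) \geq v$, while $y \peq x_*$ gives $V(y) \leq V(x_*) \leq v$. Thus $V(y) = V(x_*)$, and $d(x_*, y) \leq V(x_*) - V(y) = 0$ forces $y = x_*$.

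The step I expect to be the genuine obstacle — and the one responsible for the nontrivial reverse-mathematical content of the theorem — is the selection of the witness $x_{n+1} \in S_n$ with $V(x_{n+1}) \leq v_n + 2^{-n}$. The set $S_n$ is carved out by the inequality $d(x_n,y) \leq V(x_n) - V(y)$, and since $V$ is only lower semi-continuous, membership in $S_n$ is in general $\Pi^0_1$ rather than decidable; even forming the real $v_n$ and producing a near-infimal element requires a choice principle whose strength depends on how $\spc X$ and $V$ are presented. Calibrating that cost is presumably the technical heart of the later sections.
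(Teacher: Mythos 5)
Your argument is correct, and it is essentially the standard proof of Ekeland's variational principle via the Br\o{}ndsted order: iteratively pass to near-infimal points of the descending slices $S_n$, use the telescoping bound $d(x_n,x_m)\leq V(x_n)-V(x_m)$ to get a Cauchy sequence, and use lower semi-continuity to show the limit is minimal. All the individual steps check out (in particular the final step, where $V(y)\geq v_n\geq V(x_{n+1})-2^{-n}$ squeezes $V(y)$ against $v$, is handled correctly). Note, however, that the paper itself gives no proof of this theorem: it is stated as background, attributed to Ekeland, and its reverse-mathematical analysis is delegated to the authors' earlier work \cite{EkelandSelecta}. So there is nothing in the paper to compare your route against; what the paper does instead is record the consequence it needs (any critical point of $V$ is a fixed point of $f$ in a Caristi system) and the fact that the full principle is equivalent to $\pica$. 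Your closing diagnosis is consistent with that: the selection of $x_{n+1}\in S_n$ with $V(x_{n+1})\leq v_n+2^{-n}$ is exactly where the argument leaves $\rca$, since $S_n$ is defined by a $\Pi^0_1$-type condition on a lower semi-continuous $V$ and producing near-infimal witnesses uniformly is what drives the strength up to $\pica$ in the general case (and down to $\aca$ or $\wkl$ under the compactness and continuity restrictions treated in Theorem~\ref{thm-CritPtsExist}).
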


We call such an $x_*$ a {\em critical point} of $V$.
Theorem \ref{TheoWeakCaristi} can be derived from Theorem \ref{TheoEkeland} by observing that any critical point for $V$ will also be a fixed point of $f$, by the assumption that $d(x_*,f(x_*)) \leq V(x_*) -V(f(x_*))$.
Note however that Ekeland's theorem is equivalent to $\pica$, so the transfinite methods are still hidden `under the hood' \cite{EkelandSelecta}.

Another fixed point theorem that was originally proven via infinitary methods (stated in terms of an explicit invocation of Zorn's lemma) is the Priess-Crampe \& Ribenboim theorem, which deals with spherically complete ultrametric spaces.  This result is partially motivated by logic programming~\cite{PriessCrampe2000} and has recently found applications in cut-elimination for ill-founded proofs~\cite{SavateevS21}.

\begin{Definition}
\begin{sloppypar}
A metric space $\spc X$ is an {\em ultrametric space} if for all $x,y,z\in \spc X$, $d(x,y) \leq \max \{d(x,z),d(z,y)\}$.
$\spc X$ is {\em spherically complete} if whenever $\la \overline B_{\rho_i}(x_i) \ra_{i\in\Nb}$ is a decreasing sequence of closed balls, it follows that $\bigcap _{i \in \Nb} \overline B_{\rho_i}(x_i) \not = \varnothing$.
\end{sloppypar}

A function $f\colon \spc X\to \spc X$ is {\em strictly contracting} if for all $x \not= y \in \spc X$, $d(f(x),f(y)) < d(x,y)$.
\end{Definition}

\begin{Theorem}[Priess-Crampe \& Ribenboim~\cite{priess-crampe2011}]\label{theoPriessCrampe}
Let $\spc X$ be a spherically complete ultrametric space and $f\colon \spc X \to \spc X$ be strictly contracting.
Then, $f$ has a unique fixed point. 
\end{Theorem}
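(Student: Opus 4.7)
The plan is to separate uniqueness (which is a one-line consequence of strict contraction) from existence, then prove existence by constructing a nested sequence of closed balls whose intersection is supplied by spherical completeness. The central observation is a ``self-containment'' lemma for orbit balls: writing $r(x):=d(x,f(x))$, for any $x\in\spc X$ and any $y\in\overline B_{r(x)}(x)$, the ultrametric inequality combined with $d(f(y),f(x))\le d(y,x)$ gives both $f(y)\in\overline B_{r(x)}(x)$ and $r(y)\le r(x)$; indeed $d(f(y),x)\le\max\{d(f(y),f(x)),d(f(x),x)\}\le r(x)$, and $d(y,f(y))\le\max\{d(y,x),d(x,f(y))\}\le r(x)$.

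Given this lemma, I would inductively construct points $y_n$ and balls $B_n:=\overline B_{r(y_n)}(y_n)$ starting from an arbitrary $y_0\in\spc X$, halting if some $r(y_n)=0$ produces a fixed point outright. At stage $n$, I search a computable dense sequence in $\spc X$ for some $y_{n+1}\in B_n$ whose $r$-value is within $2^{-n}$ of $\inf\{r(z):z\in B_n\}$. The key lemma guarantees $B_{n+1}\subseteq B_n$, so spherical completeness hands us a point $y_*\in\bigcap_n B_n$.

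To see that $y_*$ is a fixed point, I apply the lemma twice: $y_*\in B_{n+1}$ gives $r(y_*)\le r(y_{n+1})$, while $y_*\in B_n$ gives $f(y_*)\in B_n$. Near-minimality of $r(y_{n+1})$ then yields $r(y_{n+1})\le r(f(y_*))+2^{-n}$. Chaining these bounds for every $n$ gives $r(y_*)\le r(f(y_*))$, which forces $y_*=f(y_*)$ by strict contraction. Uniqueness is immediate, since two distinct fixed points $x_*\ne y_*$ would give $d(x_*,y_*)=d(f(x_*),f(y_*))<d(x_*,y_*)$.

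The main obstacle is to verify that the approximate minimisation step can be executed in $\rca$. This requires a presentation of $\spc X$ with a computable dense sequence, computability of $z\mapsto r(z)$ from the data of $f$ and $d$ (using that strict contraction makes $f$ $1$-Lipschitz and hence continuous), and an upper-approximation scheme for $\inf\{r(z):z\in B_n\}$ that is good enough to select $y_{n+1}$. The spherical-completeness hypothesis is used only at the final step to obtain $y_*$; the delicate work will be packaging the search into a $\rca$-definable total function so that the sequence $(y_n)$ forms a legitimate sequence of codes for points of $\spc X$ with the required near-minimality.
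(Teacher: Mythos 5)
Your mathematical skeleton is sound and in fact matches the paper's proof in its essentials: both arguments minimize $\rho(x)=d(x,f(x))$, build a nested sequence of closed balls $\overline B_{\rho(y_n)}(y_n)$ via exactly your self-containment computations (ultrametric inequality plus $d(f(y),f(x))\le d(y,x)$), invoke spherical completeness, and conclude from strict contraction that the limit point is fixed. The problem is the step you yourself flag: the approximate minimisation over $B_n$ cannot be carried out in $\rca$, and indeed cannot be carried out computably at all. Two concrete obstacles. First, membership in the closed ball $B_n$ is a $\Pi^0_1$ condition ($d(a,y_n)\le \rho(y_n)$), so the candidate set for $y_{n+1}$ cannot even be enumerated. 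Second, and more seriously, there is no terminating search that returns a point within $2^{-n}$ of $\inf\{\rho(z):z\in B_n\}$: inspecting finitely many points yields an upper bound on the infimum but no certificate that a far better point does not remain undiscovered, and in $\rca$ that infimum need not even exist as a real number. So ``select $y_{n+1}$ with $r(y_{n+1})\le \inf_{B_n}r+2^{-n}$'' is not an instruction $\rca$ can execute; this is where an idea, not bookkeeping, is required. (Your side remark about halting when some $r(y_n)=0$ is likewise a non-effective $\Pi^0_1$ test, though that one is harmlessly removable.)

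The paper's proof repairs precisely this step by abandoning both the localisation to $B_n$ and the quantitative $2^{-n}$ target. It fixes an enumeration $\la a_i\ra_{i\in\Nb}$ of the countable dense set in which every point occurs infinitely often and runs a greedy update: $b_{i+1}=a_{i+1}$ if some witness $z<i$ certifies the $\Sigma^0_1$ fact $\rho(a_{i+1})<\rho(b_i)$, and $b_{i+1}=b_i$ otherwise. This is a bounded, decidable check, so the sequence exists in $\rca$, and the infinite repetition yields the weak limit property that for every $x\in\spc X$ and $\varepsilon>0$ some $\rho(b_i)<\rho(x)+\varepsilon$ --- which is all your final chaining argument actually uses, applied to $x=f(y_*)$. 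Crucially, no constraint $b_{i+1}\in B_i$ is imposed: the nesting $\overline B_{\rho(b_{i+1})}(b_{i+1})\subseteq\overline B_{\rho(b_i)}(b_i)$ follows automatically from $\rho(b_{i+1})\le\rho(b_i)$ together with the ultrametric inequality and strict contraction, since the middle term $d(f(b_i),f(b_{i+1}))$ can never realise the maximum in $d(b_i,b_{i+1})\le\max\{\rho_i,d(f(b_i),f(b_{i+1})),\rho_{i+1}\}$. If you replace your stage-$n$ search with this global greedy scheme, the rest of your argument goes through essentially verbatim.
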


Our goal is to determine the strength of the Priess-Crampe \& Ribenboim theorem and Caristi's theorem in the sense of reverse mathematics. 

For Caristi's theorem, in order to deal with `arbitrary' $f$, we consider the case where $f$ is either Baire or Borel, as these are wide classes that can readily be coded within second-order arithmetic. The Caristi theorem for these classes is strictly between $\atr$ and $\pica$, and indeed equivalent to the theory of the transfinite leftmost path principle introduced by Towsner~\cite{Towsner2013}. This shows that transfinite methods cannot be avoided altogether. The reversal, however, requires a fairly complicated choice of $\mathcal{X}$ and $f$: Caristi's theorem only requires such a strong theory because it covers such complicated functions on fairly general spaces.

If we restrict $\mathcal{X}$ or $f$ to nicer examples, Caristi's theorem becomes easier to prove. The case where both $f$ and $V$ are continuous has also been treated by Peng and Yamazaki~\cite{Peng2017}; this case is interesting, as it can already be viewed as a generalization of the Banach fixed point theorem~\cite{CaristiApps}.
Using our previous work on the reverse mathematics of Ekeland's variational principle~\cite{EkelandSelecta}, we extend this treatment to lower semi-continuous $V$.  As we will see, weakened versions of Caristi's theorem are equivalent to either $\wkl$ when $\spc X$ is compact and both functions are continuous, or $\aca$ when either compactness~\cite{Peng2017} or continuity is dropped (but not both).

Regarding the Priess-Crampe \& Ribenboim theorem, surprisingly it may already be proven in $\rca$.
We show this by exhibiting a new constructive proof.

\section{Subsystems of second-order arithmetic}\label{SecSSOA}

We will work within subsystems of second-order arithmetic as in~\cite{SimpsonSOSOA}.
The language is that of Peano arithmetic enriched with variables for sets of natural numbers which may be quantified over.  We use ${\Delta}^0_0$ to denote the set of all formulas, possibly with set parameters, where no second-order quantifiers appear and all first-order quantifiers are bounded, and as usual define the classes $\Sigma^e_n$ and $\Pi^e_n$ where $n$ is the number of alternating first-order (for $e=0$) or second-order (for $e=1$) quantifiers (see e.g.~\cite{SimpsonSOSOA} for details).

We use the notation $\sel x_0,\ldots,x_n\ser$ to denote sequences of natural numbers encoded in a standard way.
As usual, sets of pairs may be used to represent binary relations and functions on the natural numbers, and for a binary relation $R$, $\field R$ denotes the union of the domain and codomain of $R$.%
\footnote{Within $\rca$, $\field R$ may not exist as a set but one can always find an isomorphic relation $R'$ such that $\field{R'}$ exists as a set.}
The set of all finite sequences of natural numbers is denoted $\Nb^{<\Nb}$. For $\sigma,\tau\in\Nb^{<\Nb}$ we write $\sigma\secseq \tau $ if $\sigma$ is an initial segment of $\tau$, $\sigma\secsub \tau $ if $\sigma$ is a proper initial segment of $\tau$, and set $\mathop\downarrow \sigma=\{\tau \in \Nb^{<\Nb} : \tau \secseq \sigma\}$.
If $\Lambda \colon \Nb \to \Nb$ and $n\in \Nb$, write $\Lambda\upharpoonright n$ for the finite sequence $\sel\Lambda(i)\ser_{i<n}$; note that the sequence is empty when $n=0$. For a set $X\subseteq \Nb$ we write $X\upharpoonright n$ for the finite sequence $\lambda_{X}\upharpoonright n$ where $\lambda_{X}$ is the characteristic function of $X$. We extend the use of $\secsub$ by defining $\sigma \secsub \Lambda$ whenever $\sigma = \Lambda\upharpoonright n$ for some $n$.
Concatenation of sequences is denoted by $\frown$.
If $X,Y \subseteq\Nb$ then $X\oplus Y$ is $\{2n : n \in X\} \cup \{2n+1 : n \in Y\}$, the Turing join of $X$ and $Y$.

We will represent trees as subsets of $\Nb^{<\Nb}$ which are downward closed under $\secseq$. Binary trees are then those trees which are subsets of $\{0,1\}^{<\Nb}$.
An infinite sequence $\Lambda$ is a \emph{path through $T$} if for every $n$, $\Lambda\upharpoonright n\in T$, and $[T]$ denotes the class of paths through $T$.

If $\prec$ is a well-order on a subset of $\Nb$, then ${\rm ot}(\prec)$ denotes the order-type of $\prec$.
If moreover $X\subseteq \Nb^2$ is a set of pairs, and $\alpha\in\field\prec$, we write  $X_\alpha$ for $\{x\in \Nb:\sel x,\alpha\ser \in X \}$ and $X_{ \prec \alpha }$ for the set of all $\sel n, \beta \ser \in X$ with $\beta \prec \alpha $, with $X_{ \preccurlyeq \alpha }$ being defined analogously.
Transfinite recursion along $\prec$ may be defined in second-order arithmetic, as follows.

\begin{Definition}[\cite{SimpsonSOSOA}*{Chapter V}]\label{defH}
  Let $\theta(x,Y,\vec z,\vec Z)$ be any formula.
We define $H_\theta(\prec,Y,\vec z,\vec Z)$ to be the formula which says that, for each $\alpha \in \field \prec $, $ Y_\alpha =\{x: \theta(x,Y_{\prec \alpha},\vec z,\vec Z)\}$ and for $\alpha \not\in \field \prec $, $Y_\alpha =\varnothing$.
  We may write $H_\theta(\prec,Y)$ when $\vec z,\vec Z$ are clear from context.
\end{Definition}

We may choose $\theta$ so that for any parameters $Y,\vec z,\vec Z$, the set $\{x: \theta(x,Y,\vec z,\vec Z)\}$ is a universal computably enumerable set relative to $Y,\vec Z$---that is, so that for any $Y,\vec z,\vec Z$, any set computably enumerable relative to $Y,\vec Z$ is equal to some slice $\{x : \theta(\langle e,x\rangle,Y,\vec z,\vec Z)\}$. In this case the choice of $\theta$ only matters up to details of coding, so we omit it.
\begin{Definition}  
For a fixed $\theta_* $ such that for any parameters $Y,\vec z,\vec Z$, the set $\{x: \theta_*(x,Y,\vec z,\vec Z)\}$ is a universal computably enumerable set relative to $Y,\vec Z$, we write $H$ instead of $H_{\theta_*}$.
\end{Definition}

This suffices to define the `Big Five' theories of reverse mathematics: $\rca$ includes basic axioms of arithmetic together with induction for $\Sigma^0_1$-definable predicates and comprehension for $\Delta^0_1$-definable predicates; $\wkl$ extends $\rca$ with the formalized weak K\"{o}nig's lemma; and $\aca$ includes comprehension for arithmetical formulas.
Then $\atr$ ensures that, whenever $\prec$ is a well-order, there is a unique $Y$ so that $H(\prec,Y,\vec z,\vec Z)$ holds, and finally, $\pica$ is axiomatized with comprehension for $\Pi^1_1$ formulas.
We have mentioned these theories in strictly increasing order of strength, but there is a less known theory between $\atr$ and $\pica$ due to the third author~\cite{Towsner2013}.

\begin{Definition}\label{defSigmaPrec}
When $\prec$ is a linear order, we say $\prec$ is a \emph{successor} if it has a maximal element $x$, and we define its \emph{predecessor} ${\prec^-}= {{\prec}\upharpoonright{(\field \prec \setminus\{x\}) }}$.
  When $\prec$ is a well-order, we say $W$ is \emph{$\Sigma_\prec^Z$} if either
  \begin{enumerate*}[label=(\alph*)]
  \item $\prec$ is a successor and $W$ is computably enumerable in the unique $\bar Y:=Y\oplus Z$ so that $H(\prec^-,Y,Z)$ holds, or
  \item $\prec$ is not a successor and $W$ is computable in the $\bar Y:=Y\oplus Z$ so that $H(\prec,Y,Z)$ holds.
  \end{enumerate*}%
\footnote{Note that there is an irregularity at the lowest level  in our notation, namely, $W$ is $\Sigma_{\emptyset}^{Z}$ means that $W$ is $\Delta_{1}^{Z}$ and not $\Sigma_{0}^{Z}$ in the sense that $W$ is defined by a bounded formula from $Z$.}
\end{Definition}

$\mathsf{TLPP}_0$ is defined to be $\rca$ together with the ${\rm TLPP}$ (transfinite leftmost path principle) axiom.
When $\Lambda$ and $\Gamma$ are infinite sequences, we write $\Gamma<\Lambda$ if there is some $n$ such that $\Gamma\upharpoonright n=\Lambda\upharpoonright n$ and $\Gamma(n)<\Lambda(n)$.
Then $\rm TLPP$ is the formalization of the following statement:
 {\em whenever $T \subseteq \Nb^{<\Nb}$ is a tree with an infinite path and $\prec$ is a well-order, there is a path $\Lambda$ through $T$ such that there is no path $\Gamma$ through $T$ which is $\Sigma^{\Lambda\oplus T}_\prec$ and $\Gamma < \Lambda$.}
We call $\Lambda$ a {\em relativized $\prec$-leftmost path for $T$,} or just {\em relativized leftmost path} when $\prec$, $T$ are clear from context.
For a detailed treatment of these and other subsystems of second-order arithmetic, see~\cites{SimpsonSOSOA,Towsner2013}.
The following two characterizations of $\mathsf{TLPP}_0$ will be useful; the proof is simply a relativization of the one in~\cite{MR1428011}.

\begin{Lemma}[$\rca$]\label{thm:tlpp_equivalent}
The following are equivalent:
\begin{enumerate}
\item\label{itTLPPOne} $\mathsf{TLPP}_0$.
\item\label{itTLPPTwo} Whenever $\sel T_n\ser_{n\in\Nb}$ is a sequence of trees and $\prec$ is a well-order, there are a pair of sets $Z_0,Z_1$ so that $n\in Z_0$ if and only if there is a path through $T_n$ which is $\Sigma^{Z_0\oplus Z_1\oplus \sel T_n \ser_{n\in\Nb}}_\prec$.
\item\label{itTLPPThree} For any parameters $\vec Z$, any well-order $\prec$, and any $\Sigma^{0}_{2}$ formula
 $\phi$, there are sets $X_0,X_1$ so that $x\in X_0$ iff for every $Y$ which is $\Sigma_{\prec}^{X_0\oplus X_1\oplus \vec Z}$, $\phi(x,Y,\vec Z)$ holds.
\end{enumerate}
\end{Lemma}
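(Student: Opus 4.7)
The argument follows the pattern of the unrelativized leftmost path equivalence proved by Marcone in~\cite{MR1428011}, with every invocation of ``computable in the $\alpha$-th jump'' replaced throughout by ``$\Sigma_{\prec}^{\bullet}$''. The plan is to carry out a cycle via three implications: $(1)\Rightarrow(2)$, $(2)\Leftrightarrow(3)$, and $(2)\Rightarrow(1)$.

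For $(1)\Rightarrow(2)$, given $\langle T_n\rangle_{n\in\Nb}$ and $\prec$, I would interleave the trees into a single combined tree $T^{*}$ whose nodes encode, block by block in $n$, a flag indicating ``$T_n$ has an infinite path'' together with (when the flag is positive) an initial segment of such a path. Ordering flags so that the positive flag is lexicographically smaller, the true assignment witnesses $[T^{*}]\ne\varnothing$, so ${\rm TLPP}$ applies and yields a relativized $\prec$-leftmost path $\Lambda$. Read off $Z_0$ from the flags in $\Lambda$ and $Z_1$ from the path-blocks. If $n\in Z_0$, the $n$-th block of $\Lambda$ literally is a $\Sigma_{\prec}^{\Lambda\oplus T^{*}}$ path of $T_n$. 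If, conversely, $n\notin Z_0$ but some $\Sigma_{\prec}^{Z_0\oplus Z_1\oplus\langle T_m\rangle}$ path $\Gamma$ of $T_n$ existed, splicing $\Gamma$ into $\Lambda$ at the $n$-th block would yield a strictly smaller path of $T^{*}$ still in the class $\Sigma_{\prec}^{\Lambda\oplus T^{*}}$, contradicting leftmostness.

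For $(2)\Leftrightarrow(3)$, the direction $(3)\Rightarrow(2)$ is by choosing the $\Sigma^{0}_{1}$ (hence $\Sigma^{0}_{2}$) formula $\phi(n,Y,\langle T_m\rangle):=\exists k\,(Y\upharpoonright k\notin T_n)$, expressing ``$Y$ fails to be a path through $T_n$''. Applying (3) returns $X_0,X_1$ with $n\in X_0$ iff no $\Sigma_{\prec}^{X_0\oplus X_1\oplus\langle T_m\rangle}$ set is a path of $T_n$; setting $Z_0:=\Nb\setminus X_0$ and $Z_1:=X_1$ then gives the statement of (2). For $(2)\Rightarrow(3)$, writing $\phi(x,Y,\vec Z)=\exists u\,\forall v\,\psi(x,u,v,Y,\vec Z)$ with $\psi$ bounded, I would define $T_x$ to consist of pairs $(\sigma,\tau)\in 2^{<\Nb}\times\Nb^{<\Nb}$ of equal length with $\neg\psi(x,i,\tau(i),\sigma,\vec Z)$ for all $i<|\sigma|$. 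Infinite paths of $T_x$ encode sets $Y$ satisfying $\neg\phi(x,Y,\vec Z)$, and the $Y$-coordinate of such a path has the same $\Sigma_{\prec}$-complexity as the path. Apply (2) to $\langle T_x\rangle$ and complement to obtain the desired $X_0,X_1$.

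For $(2)\Rightarrow(1)$, given a tree $T$ with an infinite path and a well-order $\prec$, apply (2) to the sequence of subtrees $\langle T_\sigma\rangle_{\sigma\in\Nb^{<\Nb}}$ with $T_\sigma:=\{\tau:\sigma\frown\tau\in T\}$; the resulting $Z_0$ tells us exactly which $T_\sigma$ admit a $\Sigma_{\prec}^{Z_0\oplus Z_1\oplus T}$ path. Build $\Lambda$ greedily by $\Lambda(n):=\min\{k:(\Lambda\upharpoonright n)\frown\langle k\rangle\in Z_0\}$, and a standard induction verifies that the minimum exists at every stage and that $\Lambda$ is a relativized $\prec$-leftmost path of $T$. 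The main obstacle in writing out the full proof is the bookkeeping around the $\Sigma_{\prec}^{\bullet}$ hierarchy: in each construction one must check that the set produced lies in exactly the $\Sigma_{\prec}$ class relative to exactly the expected parameters, and that the two cases of Definition~\ref{defSigmaPrec} (successor versus non-successor $\prec$) do not introduce obstructions. The combinatorial constructions of $T^{*}$, $T_x$, and $\langle T_\sigma\rangle$ are all $\Delta^{0}_{1}$ in the given data, so $\rca$ suffices for each step.
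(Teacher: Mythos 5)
Your constructions for (1)$\Rightarrow$(2) and (2)$\Leftrightarrow$(3) match the paper's in all essentials (the paper's version of your ``negative flag'' is an all-$1$'s branch appended to each $T_n$; note that it is this always-available branch, not the ``true assignment,'' that lets $\rca$ verify that the combined tree has a path, since the true assignment is not constructible in $\rca$). The genuine problem is in your direction (2)$\Rightarrow$(1). You apply (2) to $\sel T_\sigma\ser_{\sigma\in\Nb^{<\Nb}}$ and define $\Lambda(n)=\min\{k:(\Lambda\upharpoonright n)^\frown\sel k\ser\in Z_0\}$, asserting that a standard induction shows the minimum exists at every stage. It does not, as stated: (2) only asserts the existence of \emph{some} pair $Z_0,Z_1$ satisfying the biconditional, and nothing forces the oracle $Z_0\oplus Z_1\oplus\sel T_\sigma\ser$ to lie above any path of $T$. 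For instance, if $T$ has paths but none that is $\Sigma_\prec^{T}$ (such computable trees exist for any infinite $\prec$), then $Z_0=Z_1=\varnothing$ satisfies the biconditional vacuously, every $\sel k\ser\notin Z_0$, and already $\Lambda(0)$ is undefined.

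The repair is exactly the device the paper uses in its own version of this step (there phrased as (3)$\Rightarrow$(1)): carry a given infinite path $P$ of $T$---which exists by the hypothesis of ${\rm TLPP}$---as an explicit parameter, e.g.\ by inserting the tree $\{P\upharpoonright n: n\in\Nb\}$ into the sequence handed to (2). Then $P$ is computable from the oracle, hence $\Sigma_\prec$ relative to it, so each $T_{P\upharpoonright n}$ has a $\Sigma_\prec$ path; this gives the base case $\sel P(0)\ser\in Z_0$, and the inductive step (if $\sigma\in Z_0$ with witness $\Gamma$, then $\sigma^\frown\sel\Gamma(0)\ser\in Z_0$) goes through as you say. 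This is precisely the paper's ``$\Lambda(0)$ is defined because $P\upharpoonright 1\notin X_0$.'' With that one addition your argument closes: the leftmostness verification, using that $\Lambda$ is computable from $Z_0$ to convert a $\Sigma_\prec^{\Lambda\oplus T}$ counterexample $\Gamma<\Lambda$ into a $\Sigma_\prec^{Z_0\oplus Z_1\oplus\sel T_\sigma\ser}$ one contradicting the minimality of $\Lambda(n)$, is the same as the paper's.
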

\begin{proof}
To show \ref{itTLPPOne} implies \ref{itTLPPTwo}, let $\sel T_n\ser_{n \in \Nb}$ be given.  We define a single tree $T$ intertwining these trees: we first define $T'_n=\{\sel 0\ser^\frown \sigma: \sigma\in T_n\}\cup\{\sigma:  \forall i<|\sigma| \ \sigma(i)=1\}$ (that is, we add a single infinite path to the right of all paths in $T_n$) and then take $T=\{\sigma: \forall\sel i,n\ser <|\sigma| \ \sel \sigma(\sel 0,n\ser ),\ldots,\sigma(\sel i,n\ser )\ser\in T'_n\}$.  By $\mathsf{TLPP}_0$, let $\Lambda$ be a relativized leftmost path for $T'$ and let $Z=\{n: \Lambda(\sel 0,n\ser )=0\}$.

If $n\in Z$ then there is a path through $T_n$ computable from $\Lambda$.  If $n\not\in Z$ and $\Gamma$ is a path through $T_n$ which is $\Sigma_\prec^{Z\oplus\Lambda \oplus\sel T_n \ser_{n \in \Nb}}$, then we can modify $\Lambda$ by setting $\Lambda'(\sel 0,n\ser )=0$, $\Lambda'(\sel i+1,n\ser )=\Gamma(i)$, and $\Lambda'(\sel i,m\ser )=\Lambda(\sel i,m\ser)$ for $m\neq n$.  But $\Lambda'<\Lambda$ and $\Lambda'$ is $\Sigma_\prec^{\Lambda\oplus \sel T_n\ser_{n \in \Nb} }$, contradicting the choice of $\Lambda$.

Next we show that \ref{itTLPPTwo} implies \ref{itTLPPThree}.  
By the normal form theorem, write $\phi(x,Y,\vec Z)\equiv\neg \forall w \exists z\ \psi(w,z,x,Y\upharpoonright z,\vec Z)$.  
For each $x$, let $T_x$ be the tree defined by $\sigma\in T_{x}$ iff for all $i\le j<|\sigma|$, $\sigma(i)\secseq\sigma(j)\in 2^{\Nb}$, $i\le|\sigma(i)|$ and $\exists z\le |\sigma(i)|\ \psi(i,z,x,\sigma(i)\upharpoonright z,\vec Z)$ holds.

We extend $\sel T_x\ser_{x \in \Nb}$ to encode $\vec Z$ and find sets $S_0,S_1$ so that $x\in S_0$ iff there is a path through $T_x$ which is $\Sigma^{S_0\oplus S_1\oplus\sel T_x\ser_{x \in \Nb}\oplus\vec Z}_\prec$.  The complement $\overline{S_0}$ of $S_0$ is the desired set.  If $x\not\in \overline{S_0}$, then there is a path $\Lambda$ through $T_x$ which is $\Sigma^{S_0\oplus S_1\oplus\sel T_x\ser_{x \in \Nb}\oplus \vec Z}_\prec$. Since $\sel T_x\ser_{x \in \Nb}$ is computable from $\vec Z$, this path is $\Sigma^{\overline{S_0}\oplus S_1\oplus\vec Z}_\prec$ and $Y=\{i:\Lambda(i)(i)=1\}$ witnesses $\forall w\exists z\ \psi(w,z,x,Y\upharpoonright z,\vec Z)$.  On the other hand, if there is a $Y$ so that $\forall w\exists z\psi(w,z,x,Y\upharpoonright z,\vec Z)$ and $Y$ is $\Sigma^{\overline{S_0}\oplus S_1\oplus\vec Z}_\prec$ then a function $\Lambda$ defined by $\Lambda(i)=Y\upharpoonright z_{i}$ where $z_{i}$ is the smallest $z>i$ such that $\psi(i,z,x,Y\upharpoonright z,\vec Z)$ holds is a path through $T_x$ which is $\Sigma^{S_0\oplus S_1\oplus\sel T_x\ser_{x \in \Nb}\oplus\vec Z}_\prec$ and therefore $x\not\in\overline{S_0}$.

To show that \ref{itTLPPThree} implies \ref{itTLPPOne}, let $T$ be a tree with an infinite path $P$, and let $\prec$ be a well-order.  Let $\phi(\sigma,Y,T)$ be a formula which holds iff $Y$ is not a path through $T_\sigma=\{\tau\in T: \sigma \secseq \tau\}$.  By~\ref{itTLPPThree}, let $X_0$ and $X_1$ be such that $\sigma \in X_0$ iff no $Y$ which is $\Sigma_\prec^{X_0 \oplus X_1 \oplus T \oplus P}$ is a path through $T$ extending $\sigma$.  Now recursively compute a path $\Lambda$ from $X_0$ by setting $\Lambda(n)$ to be the least $i$ such that $(\Lambda \upharpoonright n)^\frown \sel i \ser \notin X_0$.  The value $\Lambda(0)$ is defined because $P \restriction 1 \notin X_0$.  If $\Lambda \upharpoonright n$ is defined for some $n \geq 1$, then $\Lambda \upharpoonright n \notin X_0$, so there is a $\Sigma_\prec^{X_0 \oplus X_1 \oplus T \oplus P}$ path $Y$ through $T$ extending $\Lambda \upharpoonright n$.  Then $Y \restriction (n+1) \notin X_0$, which implies that $\Lambda(n)$ is defined and therefore that $\Lambda \restriction (n+1)$ is defined.  Thus $\Lambda$ is a path through $T$.  Now suppose for a contradiction that there is a $\Sigma_\prec^{\Lambda \oplus T}$ path $\Gamma$ through $T$ with $\Gamma < \Lambda$.  Let $n$ be such that $\Gamma \restriction n = \Lambda \restriction n$ and $\Gamma(n) < \Lambda(n)$.  Then $\Gamma \restriction (n+1) \notin X_0$ because $\Gamma$ is $\Sigma_\prec^{\Lambda \oplus T}$ and hence $\Sigma_\prec^{X_0 \oplus X_1 \oplus T \oplus P}$.  This contradicts the definition of $\Lambda(n)$ because $(\Lambda \restriction n)^\frown \sel \Gamma(n) \ser = \Gamma \restriction (n+1) \notin X_0$, but $\Gamma(n) < \Lambda(n)$.
\end{proof}

The unrelativized version of (iii)---the existence of a set $X$ so that $x\in X$ iff for \emph{every $Y$}, $\phi(x,Y,\vec Z)$ holds, is sometimes called an \emph{impredicative} definition (since the set $Y$ might be $X$ itself, or something defined from $X$). The relativized version is called a \emph{partial impredicativity} because we only consider those $Y$'s which are defined from $X$ in a limited way.

\section{Baire and Borel functions}\label{sec-MetricDefs}

Part of the appeal of second-order arithmetic as a foundational system for mathematics is that it suffices to develop a large part of mathematical analysis, particularly when dealing with separable metric spaces.
However, this requires some coding machinery. In this section, we recall this machinery, and establish notation that will be used throughout the text.

\begin{Definition}[$\rca$; see~\cite{SimpsonSOSOA}*{Definition~II.5.1}]
A (code for a) complete separable metric space $\mathcal X=\widehat X$ is defined in $\rca$ to be a nonempty set $X\subseteq \Nb$ together with a sequence of real numbers $d\colon X\times X\to \Rb$ such that $d(a, a) = 0$, $d(a, b) = d(b, a) \geq 0$, and $d(a, b)+d(b, c) \geq d(a, c)$ for all $a, b, c \in X$. A {\em point} of $\widehat X$ is a sequence $x = \la x_i \ra_{i \in \Nb}$ of elements of $X$ such that for all $i\leq j$, $d(x_i , x_j)\leq  2^{-i}$. We write $x\in \widehat X$ to mean that
$x$ is a point of $\widehat X$. We set $d(x,y)=\lim_{n\to\infty}d(x_n,y_n)$, which provably exists in $\rca$.
\end{Definition}

We say that the space $\spc X$ is {\em compact} if there is a sequence of points witnessing that $\spc X$ is totally bounded~\cite{SimpsonSOSOA}*{Definition~III.2.3}.  For our purposes, it suffices to mention that $\rca$ proves that both $[0,1]$ and the Cantor space are compact in this sense.
Here, the Cantor space is $\{0,1\}^\Nb$ with $d(\Lambda,\Lambda') = 2^{-n}$, for the least $n$ such that $\Lambda(n) \neq \Lambda'(n)$ and $d(\Lambda,\Lambda') =0$ when no such $n$ exists.
The {\em Baire space} is defined analogously, but with set of points $\Nb^\Nb$; note that the Baire space is {\em not} compact.
In the definitions below, $\mathbb Q^{>0}$ denotes $\{q \in \mathbb Q: q > 0 \}$.

\begin{Definition}[$\rca$; see~\cite{SimpsonSOSOA}*{Definition~II.5.1}]
Let $\widehat X$ be a complete separable metric space. The {\em (code for the) rational open ball $\ball ar$} is the ordered pair $\la a,r\ra$, with $a\in X$ and $r\in\mathbb Q^{>0}$.  We define $\ball ar \ballsub \ball bq$ if $d(a,b) + r < q$ and $\ball ar \ballseq \ball bq$ if $d(a,b) + r \leq q$. 
\end{Definition}

We remark that $\ball ar\ballseq \ball bq$ implies that $\ball ar\subseteq \ball bq$ (in the usual set-theoretic sense), but not necessarily the converse (consider e.g.~the case where $\spc X$ is a singleton).

One challenge in formalizing Caristi's theorem in the context of second-order arithmetic is that it applies to {\em arbitrary} functions $f$, which would in principle require a third-order quantification.
Instead, we will work with a rather wide class of functions that can still be formalized as second-order objects: Baire and Borel functions.
To keep our presentation unified and to minimize coding concerns, we view all functions on metric spaces as special cases of Baire or Borel functions, which is not how e.g.~continuous or lower semi-continuous functions are typically coded in the literature.
In Appendix~\ref{App} we discuss the relationship between our codes and the more standard ones.

The general theory of Borel sets in reverse mathematics is well established, for instance in~\cites{MR1197207,SimpsonSOSOA}, but Borel \emph{functions} have been less well studied.

\begin{Definition}
  When $\mathcal{X}$ is a complete separable metric space, recall that
  a \emph{Borel code} (that is, a code for a Borel subset of
  $\mathcal{X}$) is a tree of sequences $S$ such that:
  \begin{enumerate*}[label=(\alph*)]
  \item there is no infinite path through $S$,
  \item there is exactly one $n$ so that $\sel n\ser \in S$, and
  \item any leaf $\sigma\in S$ has the form
    $\tau^\frown \sel \ball ar \ser$, where $a\in X$ and $r\in\Qb^{>0}$.
  \end{enumerate*}

  We view such a tree as coding a Borel set.
  In fact, every node in the tree will code a Borel set $U_S(\sigma)$ defined recursively by
  \begin{itemize}
  \item when $\sigma=\upsilon^\frown \sel \ball ar \ser $ is a leaf,
    $U_S(\sigma)=\ball ar$,
  \item when $\sigma=\upsilon^\frown\sel n\ser$ where $n$ is
    odd,
    $U_S(\sigma)=\bigcup  \{U_S(\sigma^\frown\sel k\ser) : { \sigma^\frown\sel k\ser\in
      S} \}$,
  \item when $\sigma=\upsilon^\frown\sel n\ser$ where $n$ is
    even,
    $U_S(\sigma)=\bigcap \{U_S(\sigma^\frown\sel k\ser) : {  \sigma^\frown\sel k\ser\in
      S} \}$,
  \item when $\sigma=\sel\ser$,
    $U_S(\sigma)=U_S(\sel n\ser)$, where $n$ is unique so that
    $\sel n\ser\in S$.
  \end{itemize}
\end{Definition}

In particular, open sets may be represented as unions of open balls, hence via a Borel code.
By abuse of notation, we write $x\in S$ as an abbreviation for $x\in U_S(\sel\ser)$.  Note that the latter is not a formula of second-order arithmetic, however it is a theorem that, in $\atr$, we can identify membership in coded Borel sets (see \cite{SimpsonSOSOA}*{Lemma~V.3.3 and Definition~V.3.4}).

\begin{Lemma}[$\atr$]
If $\sel x_n \ser_{n \in \Nb}$ is a sequence of points in $\mathcal{X}$ and $\sel S_n \ser_{n \in \Nb}$ is a sequence of Borel codes then $\{n: x_n \in U_{S_n}(\sel\ser)\}$ exists.
\end{Lemma}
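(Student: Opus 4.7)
The plan is to perform arithmetical transfinite recursion along a single well-ordering that governs the entire sequence uniformly. First, I would combine the codes into one well-founded tree
$T = \{\sel\ser\} \cup \bigcup_{n\in\Nb}\{\sel n\ser^\frown \sigma : \sigma \in S_n\}$;
since each $S_n$ has no infinite path, neither does $T$. The Kleene--Brouwer linear order $\prec_T$ on $T$ is then a well-ordering (cf.\ Simpson Chapter~V), with the key property that descendants in the tree lie $\prec_T$-below their ancestors, so bottom-up evaluation from leaves to root is exactly $\prec_T$-transfinite recursion.

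Applying $\atr$ along $\prec_T$, I would build a set $Y \subseteq T$ whose intended meaning is $\sel n\ser^\frown \sigma \in Y$ iff $x_n \in U_{S_n}(\sigma)$. The recursion clauses are arithmetical in the $\prec_T$-earlier stages: at a leaf $\sel n\ser^\frown \tau^\frown \sel \ball{a}{r}\ser$, place the node in $Y$ iff $d(x_n,a) < r$, which is a $\Sigma^0_1$ condition on the sequence coding $x_n$; at a node $\sel n\ser^\frown \tau^\frown \sel m\ser$ with $m$ odd (union) place it in $Y$ iff some child of it in $T$ lies in $Y$; with $m$ even (intersection) iff every such child lies in $Y$; and at the root $\sel\ser$, place $\sel n\ser$ in $Y$ iff the unique child $\sel n\ser^\frown \sel m_n\ser$ of $\sel n\ser$ in $T$ lies in $Y$. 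Once $Y$ exists, the desired set $\{n : x_n \in U_{S_n}(\sel\ser)\} = \{n : \sel n\ser \in Y\}$ is $\Delta^0_1$ in $Y$, hence exists by $\rca$.

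The main obstacle, and the only substantive mathematical content, is the semantic correctness clause: verifying that the set $Y$ produced by the $H$-hierarchy really agrees with the intended meaning of the Borel operators $U_{S_n}$. This is essentially the content of Simpson's Lemma~V.3.3 and Definition~V.3.4 for a single Borel code, proved by external induction on the $\prec_T$-rank of $\sigma$ together with the uniqueness clause that $\atr$ provides for the $H$-hierarchy. The passage to sequences requires no further ideas beyond noting that combining the $S_n$ into $T$ preserves the absence of infinite paths and that the recursion clauses are uniform in the parameter $n$; both observations are immediate from the construction of $T$.
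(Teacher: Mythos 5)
Your proof is correct and is essentially the argument the paper intends: the lemma is stated without proof, deferring to Simpson's Lemma~V.3.3 and Definition~V.3.4, and your construction (merging the codes into one well-founded tree, running $\atr$ along its Kleene--Brouwer ordering to build the evaluation set, and verifying agreement with the intended semantics by arithmetical transfinite induction) is precisely that machinery, uniformized over the sequence of codes. Nothing further is needed.
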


We will need to discuss Borel functions.  We encode a Borel function as one where the inverse image of each basic open set is given by a Borel code.

\begin{Definition}\label{defBorel}
Let $\spc X$ and $\spc Y$ be metric spaces and $\Upsilon$ be a set coding a sequence of Borel codes $\sel\Upsilon_{\sel a,r\ser}: {\sel a,r\ser \in Y\times \mathbb Q_{>0}}\ser$.
Write $\Upsilon(\ball ar)$ for $U_{\Upsilon_{\sel a,r\ser }}(\sel\ser)$.
 
 We say that $\Upsilon$ is a \emph{(code for) a Borel function} from $\mathcal{X}$ to $\mathcal{Y}$ if for all basic open balls $B,B'$ of $\spc Y$:
  \begin{enumerate}[label=(\roman*)]
  
  \item\label{itBorelOne} if $B  \subseteq B'$ then $\Upsilon(B) \subseteq \Upsilon (B')$,
  \item\label{itBorelTwo} if $B\cap B'=\varnothing$ then $\Upsilon(B)  \cap \Upsilon (B) =\varnothing$,
  
  \end{enumerate} 

  The set $\Upsilon$ codes the partial function $f\colon \spc X\to\spc Y$, where $x\in \dom(f)$ if there 
 is a (unique) element $f(x)=y\in \spc Y$ such that for every ball $B$ containing $y$, $x\in \Upsilon (B)$.

Recall that the {\em Kleene-Brouwer order} of a tree $T$, denoted ${\text{\sc{kb}}}(T)$, is a linearization of $T$ which, provably in $\aca$, remains well founded when $T$ is (see e.g.~\cite{SimpsonSOSOA}).
When $f$ is a Borel function coded by $\Upsilon$, the \emph{complexity} of $f$, written $\|f\|$, is ${\text{\sc{kb}}}(\{\sel n\ser^\frown\sigma: \sigma\in \Upsilon_n\})$.
\end{Definition}

  In particular, continuous functions may be coded as Borel functions, where the preimage of every open ball is open.

\begin{Lemma}[$\atr$]\label{thm:borel_bounds_complexity}
  Let $\Upsilon$ be a code for a Borel function from $\mathcal{X}$ to $\mathcal{Y}$.  Then, for any $x\in\mathcal{X}$, the image of $x$ under $f$ exists and is $\Sigma_{\|f\|}^{x\oplus \Upsilon}$.
\end{Lemma}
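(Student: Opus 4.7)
The plan is to use transfinite recursion along the Kleene--Brouwer well-order $\|f\|$ to evaluate, for every Borel code $\Upsilon_{\sel a,r\ser}$, whether $x$ lies in the Borel set it codes, and then to extract $f(x)$ as the limit of a coherent shrinking sequence of rational balls whose $\Upsilon$-preimages contain $x$.

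First I would observe that $T:=\{\sel n\ser^\frown\sigma:\sigma\in\Upsilon_n\}$ is a well-founded tree, so provably in $\aca$ its Kleene--Brouwer linearization $\|f\|={\text{\sc{kb}}}(T)$ is a well-order; moreover $\|f\|$ has $\sel\ser$ as its maximum element, so it is a successor with predecessor $\|f\|^-={\text{\sc{kb}}}(T)\upharpoonright(T\setminus\{\sel\ser\})$. Using the unique $\bar Y = Y\oplus(x\oplus\Upsilon)$ with $H(\|f\|^-,Y,x\oplus\Upsilon)$ supplied by $\atr$, and the fact that $\theta_*$ is universal for computable enumerability, I would arrange the slices of $Y$ to record, for each node $\sel n\ser^\frown\sigma\in T\setminus\{\sel\ser\}$, the truth value of ``$x\in U_{\Upsilon_n}(\sigma)$''. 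The recursive clause is read off from the Borel coding: at a leaf $\sigma^\frown\sel\sel a,r\ser\ser$ the value is the $\Sigma^0_1$ test $x\in\ball ar$, while at an internal node $\sigma^\frown\sel k\ser$ with $k$ odd (resp.\ even) it is the disjunction (resp.\ conjunction) of the values at the children $\sigma^\frown\sel k\ser^\frown\sel j\ser$, which are KB-smaller than $\sigma^\frown\sel k\ser$ and hence already computed. Specializing to $\sigma=\sel\ser$ inside each $\Upsilon_{\sel a,r\ser}$ shows that $\mathcal B(x):=\{\sel a,r\ser : x\in\Upsilon(\ball ar)\}$ is computable in $\bar Y$, hence $\Sigma^{0,\bar Y}_1$, which by Definition~\ref{defSigmaPrec} is exactly $\Sigma_{\|f\|}^{x\oplus\Upsilon}$.

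From $\mathcal B(x)$ I would build $f(x)$ as a Cauchy sequence in $\mathcal Y$ by effectively picking, for each $n$, some $\sel a_n,r_n\ser\in\mathcal B(x)$ with $r_n\leq 2^{-n-1}$. Clause \ref{itBorelTwo} of Definition~\ref{defBorel} forces $\ball{a_n}{r_n}\cap\ball{a_m}{r_m}\neq\varnothing$, for otherwise their $\Upsilon$-preimages would be disjoint while both contain $x$; hence $d(a_n,a_m)<r_n+r_m$ and the centers converge to some $y\in\mathcal Y$. Any rational ball $B\ni y$ eventually contains $\ball{a_n}{r_n}$, and clause \ref{itBorelOne} then gives $x\in\Upsilon(\ball{a_n}{r_n})\subseteq\Upsilon(B)$, so $y$ satisfies the defining condition for $f(x)$; uniqueness follows from \ref{itBorelTwo} applied to two disjoint balls separating any two candidate images. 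The whole construction is computable from $\mathcal B(x)\oplus x\oplus\Upsilon$, so the code of $f(x)$ inherits the bound $\Sigma_{\|f\|}^{x\oplus\Upsilon}$.

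The main obstacle I anticipate is the bookkeeping needed to ensure that the Borel-evaluation recursion actually fits inside a single slice of $H(\|f\|^-,-,x\oplus\Upsilon)$, so that the final complexity is exactly $\Sigma_{\|f\|}$ and not one level higher; this turns on exploiting the universality of $\theta_*$ to embed the intended recursion into the universal CE hierarchy, and on aligning slices with the KB-structure of $T$. A secondary point is that the statement implicitly needs $f$ to be total at $x$: for a total Borel code the Cauchy construction above succeeds because clauses \ref{itBorelOne}--\ref{itBorelTwo} together force $\mathcal B(x)$ to contain balls of arbitrarily small radius, while for a merely partial code the conclusion should be read conditionally on $f(x)$ being defined.
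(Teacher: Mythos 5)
Your proposal is correct and follows essentially the same route as the paper's (much terser) proof: establish that the set of rational balls $B_r(a)$ with $x\in f^{-1}(B_r(a))$ is $\Sigma_{\|f\|}^{x\oplus\Upsilon}$ via transfinite recursion along the Kleene--Brouwer order of the combined code tree, then extract $f(x)$ as the limit of centers of small balls from that set. The extra details you supply---the successor structure of $\|f\|$, the use of clauses (i) and (ii) for the Cauchy and uniqueness arguments, and the caveat about partial codes---are all consistent with what the paper leaves implicit.
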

\begin{proof}
The evaluation function
of the set of balls $B_r(a)$ such that $x\in f^{-1}(B_r(a))$ is $\Sigma_{\|f\|}^{x\oplus \Upsilon}$.  Therefore we may define an approximation by taking $y_i$ to be the least $a$ such that $x\in f^{-1}(B_{2^{-i-1}}(a))$, and then the sequence $\sel y_i \ser_{i \in \Nb}$ converges to $f(x)$.
\end{proof}

Rather than referring to codes, we will usually talk about the function $f$: saying that $f$ is coded is simply saying that, for each $B_r(a)$, the set $f^{-1}(B_r(a))$ is given by a Borel code, and these codes are presented uniformly in $a,r$.

\begin{Definition}
  A \emph{(code for a) partial Baire function} is a well-founded tree $\Xi$ such that each leaf is labeled by a (code for a) partial continuous function and each non-leaf has an extension for each $i\in\mathbb{N}$.

For any $x$ and any $\sigma\in \Xi$, the value $f_{\Xi,\sigma}(x)$, if it exists, is defined recursively by:
\begin{itemize}
\item if $\sigma$ is a leaf labeled by $f$ then $f_{\Xi,\sigma}(x)=f(x)$,
\item if $\sigma$ is not a leaf and there is any $n$ so that $f_{\Xi,\sigma^\frown\sel n\ser}(x)$ does not exist then $f_{\Xi,\sigma}(x)$ does not exist,
\item if $\sigma$ is not a leaf, $f_{\Xi,\sigma^\frown\sel n\ser}(x)$ exists for all $n$, then $f_{\Xi,\sigma}(x)=\lim_{n\rightarrow\infty}f_{\Xi,\sigma^\frown\sel n\ser}(x)$ if this exists, and does not exist otherwise.
\end{itemize}

We write $f_\Xi(x)$ for $f_{\Xi,\sel\ser}(x)$ and call $f$ a {\em Baire function} if it is given by a code for a partial Baire function such that, for every $x$, $f(x)$ exists.
\end{Definition}

It is provable in $\atr$ that every Baire function is Borel, and the converse is true on zero-dimensional spaces, including the Cantor space and Baire space; see Appendix \ref{App} for details.

Potentials (i.e., non-negative lower semi-continuous functions) can be represented as increasing limits of continuous functions and hence as Baire class $1$ functions.
For this, we use the following lemma (not formalized in second-order arithmetic).

\begin{Lemma}\label{lemmLSCrepresentInformal}
If $\spc X$ is any metric space and $V$ is a potential on $\spc X$, then there exists a pointwise increasing sequence of continuous functions $V_n\colon \spc X\to [0,\infty)$ such that $V = \lim_{n\to\infty} V_n$.\end{Lemma}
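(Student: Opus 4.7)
The plan is to construct the approximating sequence via the Moreau--Yosida (infimal convolution) approximation, a standard real-analysis device. For each $n\in\Nb$ I set
\[
V_n(x) \;:=\; \inf_{y\in \spc X} \bl V(y) + n\cdot d(x,y)\br.
\]
Note $V_n(x)\in[0,\infty)$ because $V\geq 0$ and the choice $y=x$ shows $V_n(x)\leq V(x)<\infty$. The verification then splits into three steps.

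\emph{Continuity.} I will show each $V_n$ is $n$-Lipschitz, which is stronger than continuity. Given $x,x'\in\spc X$ and any $y$, the triangle inequality gives $V(y)+n\,d(x,y)\leq V(y)+n\,d(x',y)+n\,d(x,x')$; taking infima over $y$ yields $V_n(x)\leq V_n(x')+n\,d(x,x')$, and the symmetric inequality follows by swapping $x,x'$.

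\emph{Monotonicity and upper bound.} For each $x$ and each $y$, the quantity $V(y)+n\,d(x,y)$ is non-decreasing in $n$, so $V_n(x)\leq V_{n+1}(x)$; and taking $y=x$ gives $V_n(x)\leq V(x)$. Hence $\sel V_n\ser_{n\in\Nb}$ is pointwise increasing and bounded above by $V$.

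\emph{Convergence to $V$.} This is the step where I expect the only real work: the upper bound $V(x)$ must be attained in the limit, and this is exactly where lower semi-continuity enters. Fix $x$ and set $L:=\lim_n V_n(x)$, which exists in $[0,V(x)]$ by the previous step. Assuming toward a contradiction that $L<V(x)$, pick $\alpha$ with $L<\alpha<V(x)$. For each $n$ there is a $y_n\in\spc X$ with $V(y_n)+n\,d(x,y_n)<\alpha$, whence $V(y_n)<\alpha$ and $d(x,y_n)<\alpha/n$. Thus $y_n\to x$, and lower semi-continuity of $V$ yields
\[
V(x)\leq \liminf_{n\to\infty} V(y_n)\leq \alpha < V(x),
\]
a contradiction. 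Therefore $\lim_{n\to\infty}V_n(x)=V(x)$, completing the proof. The main (and only) subtlety is precisely this last step: without lower semi-continuity one only gets convergence to the lower semi-continuous envelope of $V$, so the hypothesis on $V$ is used here in an essential way.
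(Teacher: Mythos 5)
Your proof is correct and follows exactly the paper's approach: the paper's sketch defines the $\alpha$-envelope $V_{(\alpha)}(x)=\inf_{y\in\spc X}\bl V(y)+\alpha\, d(x,y)\br$ and asserts that $(V_{(n)})_{n\in\Nb}$ is an increasing sequence of continuous functions converging pointwise to $V$, which is precisely your Moreau--Yosida construction. You have simply supplied the routine verifications (Lipschitz continuity, monotonicity, and the lower semi-continuity argument for convergence) that the paper leaves to the reader.
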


\begin{proof}[Proof sketch]
It is easy to check that an increasing limit of continuous functions is lsc.
For the converse, given $\alpha \geq 0$, define the {\em $\alpha$-envelope of $V$,} denoted $V_{(\alpha)}$, by $V_{(\alpha)}(x ) = \inf_{y\in\spc X} \big (V(y) + \alpha d(x,y)\big )$.
Then, it is not hard to check that $(V_{(n)})_{n \in\mathbb N} $ is an increasing sequence of continuous functions converging pointwise to $V$.
\end{proof}

Thus in second-order arithmetic we define a {\em potential} to be a Baire class $1$ function which is an limit of pointwise-monotone continuous functions $V_n$ (i.e.~$V_n(x) \leq V_m(x)$ if $n \leq m$).
If $V_{n}$ is the $n$-envelope of $V$, we say that $V$ is {\em enveloped.} Note that $V_{(\alpha)}$ is well defined even when $\alpha=0$, in which case we obtain $V_{(0)} = \inf V$, and thus enveloped potentials have an infimum.
As we will see, there is much more information that can be extracted from continuous envelopes.

This coding machinery will suffice to formalize Caristi's theorem, but first we turn our attention to Priess-Crampe, which involves only continuous functions.

\section{An elementary proof of the Priess-Crampe \& Ribenboim theorem}

Recall that Theorem \ref{theoPriessCrampe} states that if $\spc X$ is a spherically complete ultrametric space and $f\colon \spc X \to \spc X$ is strictly contracting, then $f$ has a unique fixed point.
This theorem may be stated in second-order arithmetic via the coding machinery described above.  For spherical completeness, let $\overline B_\rho(x)$ denote the class of $y \in \spc X$ such that $d(x,y) \leq \rho$ for given $x \in \spc X$ and $\rho > 0$.  As $\spc X$ is an ultrametric space, we write $\overline B_\rho(x) \ballseq \overline B_\delta(y)$ to denote that $\max\{d(x,y), \rho\} \leq \delta$ and observe that $\overline B_\rho(x) \ballseq \overline B_\delta(y)$ implies that $\overline B_\rho(x) \subseteq \overline B_\delta(y)$.  Then, $\spc X$ is spherically complete if whenever $\la \overline B_{\rho_i}(x_i)\ra_{i \in \Nb}$ is a sequence of closed balls such that $\overline B_{\rho_i}(x_i) \supsetpluseq \overline B_{\rho_{i+1}}(x_{i+1})$ for all $i$, there is an $x \in \bigcap_{i \in \Nb}\overline B_{\rho_i}(x_i)$.  We now obtain the following.

\begin{Theorem}
The Priess-Crampe \& Ribenboim theorem
is provable in $\rca$.
\end{Theorem}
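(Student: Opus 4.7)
My plan is to construct the unique fixed point directly as the limit of a Cauchy sequence built by dovetailing orbit iteration with a density-driven search for improvements. Fix any dense point $y_0\in X$. At stage $n$, maintain a current approximation $y_n$ together with its closed ball $C_n := \overline B_{r_n}(y_n)$, where $r_n := d(y_n,f(y_n))$. To produce $y_{n+1}$, first search the initial segment $a_0,\ldots,a_n$ of the enumeration of $X$ for a dense point $a_k\in C_n$ with $d(a_k,f(a_k))$ quantifiably smaller than $r_n$; if one is found, set $y_{n+1}:=a_k$, otherwise set $y_{n+1}:=f(y_n)$. In both cases either strict contraction (orbit) or the search condition (dense) forces $r_{n+1}<r_n$; and since $d(y_n,y_{n+1})\leq r_n$ together with $r_{n+1}<r_n$ in an ultrametric space gives $C_{n+1}\ballseq C_n$, the balls decrease.

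The crux is to verify, in $\rca$, that $r_n\to 0$. Suppose for contradiction $r_n\searrow L>0$ and apply spherical completeness to obtain $z\in\bigcap_n C_n$. A short ultrametric computation (using $d(z,y_n)\leq r_n$, $d(y_n,f(y_n))=r_n$, and $d(f(y_n),f(z))\leq d(y_n,z)\leq r_n$) gives $d(z,f(z))\leq r_n$ for every $n$, hence $d(z,f(z))\leq L$. By strict contraction $d(f(z),f^2(z))<d(z,f(z))\leq L$, so any dense point $a$ sufficiently close to $f(z)$ satisfies $d(a,f(a))<L\leq r_n$ for every $n$; the same isosceles-style computation shows $f(z)\in\bigcap_n C_n$, so such an $a$ also lies in $C_n$ for all sufficiently large $n$. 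At a late enough stage the search must discover such an $a$, contradicting $r_{n+1}\geq L$. Therefore $r_n\to 0$, ultrametricity makes $(y_n)$ Cauchy, and continuity of $f$ (which follows from strict contraction) gives $f(x^*)=x^*$ for $x^* := \lim_n y_n$. Uniqueness is automatic: two distinct fixed points $x^*,y^*$ would force $d(x^*,y^*)=d(f(x^*),f(y^*))<d(x^*,y^*)$.

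The main obstacle I expect to confront is executing the dense-point search within the weakness of $\rca$, where comparison of real numbers is not decidable. The search condition must be made $\Sigma^0_1$-verifiable, for instance by checking $d(a_k,y_n)<r_n-2^{-n}$ and $d(a_k,f(a_k))<r_n-2^{-n}$, and this slack has to be chosen so that $C_{n+1}\ballseq C_n$ remains provable while the contradiction in the preceding paragraph still goes through (the point $a$ near $f(z)$ can be chosen so close to $f(z)$ that both strict inequalities are satisfied with arbitrary margin). A secondary subtlety is that the continuity of $y\mapsto d(y,f(y))$ exploited in the contradiction must be extracted directly from the coding of $f$ rather than from a generic continuity lemma, but this is routine once $f$ is presented as a continuous function via basic open-ball conditions.
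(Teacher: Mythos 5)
Your overall strategy --- a greedy search over the dense set for points with smaller $\rho(x):=d(x,f(x))$, nested closed balls of radius $\rho$, and an appeal to spherical completeness --- is the right one, and is close in spirit to the paper's argument, but two steps fail in $\rca$. First, your contradiction is anchored on $L=\lim_n r_n$: you need both $d(z,f(z))\leq L$ and $r_{n+1}\geq L$ to close the loop. For a bounded decreasing sequence of reals the existence of this limit (equivalently of $\inf_n r_n$) is the monotone convergence theorem, which is equivalent to $\aca$ over $\rca$. With only the correct negation ``$r_n\geq\varepsilon$ for all $n$'' available, your candidate $a$ near $f(z)$ satisfies $\rho(a)<d(z,f(z))$, which need not be below $\varepsilon$, and no contradiction results. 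The detour through ``$r_n\to 0$, hence $(y_n)$ is Cauchy, hence $f(x^*)=x^*$ by continuity'' is also unnecessary: the paper never proves $r_n\to 0$. Instead it shows that any $z\in\bigcap_n C_n$ satisfies $\rho(z)\leq r_n$ for all $n$ (your ``isosceles'' computation) and that the greedy sequence eventually satisfies $r_i<\rho(x)+\varepsilon$ for \emph{every} $x\in\spc X$ and $\varepsilon>0$; applying this to $x=f(z)$ with $\varepsilon=\rho(z)-\rho(f(z))$ (strict contraction makes $\varepsilon>0$ unless $\rho(z)=0$) yields $r_i<\rho(z)\leq r_i$, so $\rho(z)=0$ and $z$ itself is the fixed point. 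No limit, no Cauchy sequence, and no continuity of $f$ are needed.

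Second, the clause ``$a_k\in C_n$'' in your acceptance condition is both harmful and superfluous. Harmful: in an ultrametric space, if $d(a,f(z))<d(f(z),y_n)$ then $d(a,y_n)=d(f(z),y_n)$ exactly, and $d(f(z),y_n)$ can equal $r_n$ exactly (this happens whenever $d(f(z),f(y_n))<d(f(y_n),y_n)=r_n$); in that case \emph{no} dense point near $f(z)$ satisfies $d(a,y_n)<r_n-2^{-n}$, your search never fires, and the contradiction collapses. Superfluous: two applications of the ultrametric inequality give $d(x,y)\leq\max\{\rho(x),d(f(x),f(y)),\rho(y)\}$, and strict contraction rules out the middle term being the maximum, so $\rho(y)\leq\rho(x)$ already forces $y\in\overline B_{\rho(x)}(x)$. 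Hence the balls nest automatically and the search condition should mention only $\rho$; this observation is the crux of the paper's proof and is what lets the whole construction run on a purely $\Sigma^0_1$, bounded-witness test (one candidate per stage, each dense point recurring infinitely often in the enumeration). Finally, ``the search must discover such an $a$'' only guarantees that the \emph{first} acceptable candidate is chosen, with $\rho$ bounded by $r_n-2^{-n}$ rather than by $\rho(a)$; to conclude $r_{n+1}=\rho(a)$ you must additionally observe that each earlier-indexed candidate can be accepted at most once, so that from some stage on $a$ itself is the least acceptable one.
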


\proof
Uniqueness follows easily from the assumption that $f$ is strictly contracting, so we focus on existence.

Let $\spc X = \widehat X$, and let $\la a_i \ra_{i\in\Nb}$ enumerate $X$ with each $a_i$ occurring infinitely often.
Define $\rho(x) = d(x,f(x))$.
We claim that there exists a sequence $\la b_i \ra_{i\in\Nb}$ such that $\rho(b_i) \geq \rho(b_{i+1})$ for all $i$ and $b_i \to \inf \rho$ as $i\to \infty$ (in the sense that for all $x\in \spc X$ and $\varepsilon>0$, there is an $i$ with $\rho(b_i) < \rho(x) +\varepsilon$).
Construct the sequence $(b_i)_{i\in\Nb}$ as follows.
Note that $\rho(x) <\rho(y)$ is a ${ \Sigma}^0_1 $ statement, which we may represent as $\exists z \phi(x,y,z)$.
Let $b_0 = a_0$, and recursively define $b_{i+1} = a_{i+1}$ if there is a $z < i$ witnessing that $\rho(a_{i+1}) < \rho(b_i)$ (in the sense that $\phi(a_{i+1},b_i,z)$ holds), and otherwise define $b_{i+1} = b_i$.
It is not hard to check that the sequence $\la b_i \ra_{i\in\Nb}$ satisfies the required properties.

Let $\rho_{i} = \rho(b_i)$ for each $i$.  We show that $\overline B_{\rho_i}(b_i) \supsetpluseq \overline B_{\rho_{i+1}}(b_{i+1})$ for all $i$.  To see this, observe that
\begin{align*}
d(b_i, b_{i+1}) &\leq \max\{d(b_i, f(b_i)), d(f(b_i), f(b_{i+1})), d(b_{i+1}, f(b_{i+1}))\}\\
&= \max\{\rho_i, d(f(b_i), f(b_{i+1})), \rho_{i+1}\} = \rho_i.
\end{align*}
\begin{sloppypar}
The last equality holds because $f$ is strictly contracting and therefore $d(f(b_i), f(b_{i+1})) < d(b_i, b_{i+1})$.  Thus it must be that $\max\{\rho_i, d(f(b_i), f(b_{i+1})), \rho_{i+1}\} = \max\{\rho_i, \rho_{i+1}\} = \rho_i$.  It follows that $\max\{d(b_i, b_{i+1}), \rho_{i+1}\} \leq \rho_i$, so $\overline B_{\rho_i}(b_i) \supsetpluseq \overline B_{\rho_{i+1}}(b_{i+1})$.
\end{sloppypar}

By spherical completeness, there is an $x_* \in \bigcap_{i \in \Nb} \overline B_{\rho_i}(b_i)$.  Then $\rho(x_*) \leq \rho_i$ for all $i$ because
\begin{align*}
d(x_*,f(x_*)) \leq \max\{d(x_*,b_i), d(b_i,f(b_i)), d(f(b_i), f(x_*))\} = \max\{d(x_*,b_i), \rho_i\} = \rho_i,
\end{align*}
since $x \in \overline B_{\rho_i}(b_i)$ and $f$ is strictly contracting.
If $\rho(x_*)>0$ then $d(x_*,f(x_*))>d(f(x_*),f^2(x_*)) = \rho(f(x_*))$, which contradicts the fact that $\lim_{i\to \infty}\rho(b_i) = \inf \rho$.
We conclude that $d(x_*,f(x_*)) = 0$.
\endproof

\section{The Caristi theorem in the Big Five}\label{SecFormCaristi}

In this section we study weakenings of Caristi's theorem provable in $\wkl$ and $\aca$.
Caristi's theorem follows directly from Ekeland's variational principle, so we first recall the main results from~\cite{EkelandSelecta} regarding the reverse mathematics of the latter.
In particular, Ekeland's variational principle is derivable in $\pica$, thus establishing an upper bound for Caristi's theorem.  Ekeland's variational principle also has natural weakenings derivable in $\wkl$ and in $\aca$.

\begin{Definition}\label{DefFormalCritical}
Given definable classes $\mathfrak X$ of coded metric spaces and $\mathfrak V$ of coded potentials, the {\em (formalized) Ekeland variational principle for $\spc X\in\mathfrak X$ and $V\in\mathfrak V$} is the statement that, if $\spc X\in\mathfrak X$ is a coded separable complete metric space and $V \in \mathfrak V$ is a coded potential, then there exists $x_\ast \in \spc X$ such that for all $x\in \spc X$, if $d(x_*,x) \leq V(x_*)-V(x)$, then $x=x_*$.

When not mentioned, we assume that $\mathfrak X$ is the class of all coded complete separable metric spaces and $\mathfrak V$ is the class of all coded potentials.
\end{Definition}

The following is proven in~\cite{EkelandSelecta}, and in fact all items reverse.

\begin{Theorem}\label{thm-CritPtsExist}
The Ekeland variational principle holds:
\begin{enumerate}[label=(\roman*)]

\item \textup{(}$\wkl$\textup{)} For compact $\spc X$ and continuous $V$.

\item\label{CritPtsExistTwo} \textup{(}$\aca$\textup{)} For compact $\spc X$ or continuous $V$.

\item\label{CritPtsExistFour} \textup{(}$\pica$\textup{)} For the class of all metric spaces with an arbitrary potential.

\end{enumerate}
\end{Theorem}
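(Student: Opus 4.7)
Theorem \ref{thm-CritPtsExist} is a compilation of upper bounds established in \cite{EkelandSelecta}, so I will indicate how the classical proof of Ekeland's variational principle formalizes at each level. The classical argument picks $x_0 \in \spc X$ arbitrarily, iteratively selects $x_{n+1}$ to approximately minimize $V$ over $S(x_n) := \{y : d(x_n,y) \leq V(x_n) - V(y)\}$, and takes the limit. Because $V \geq 0$ and $d(x_n, x_{n+1}) \leq V(x_n) - V(x_{n+1})$, the sequence $(x_n)_{n \in \Nb}$ is Cauchy; its limit $x_*$ lies in $\bigcap_n S(x_n)$ by lower semi-continuity, and the diameters of $S(x_n)$ shrink to zero, so $x_*$ is a critical point.

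For (i), I would bypass the iteration entirely via the $\wkl$-level fact that a continuous function on a compact metric space attains its infimum; any minimizer $x_*$ of $V$ is automatically a critical point, since $V(x_*) \leq V(x)$ forces $d(x_*,x) \leq V(x_*) - V(x) \leq 0$ whenever the Ekeland inequality holds.

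For (ii), I would run the iterative construction directly. When $V$ is continuous, membership in $S(x)$ is $\Pi^0_1$ and $\inf_{y \in S(x)} V(y)$ is a $\Sigma^0_2$ quantity, so selecting $x_{n+1}$ with $V(x_{n+1})$ within $2^{-n}$ of this infimum fits inside $\aca$, and completeness delivers the limit. When $V$ is lsc and $\spc X$ is compact, the envelope representation $V = \lim_n V_{(n)}$ with continuous $V_{(n)}$ reduces the infimum computation to the continuous case, again arithmetically; total boundedness confines the relevant search to a uniformly enumerable set of candidates.

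The main obstacle is (iii), where neither compactness nor continuity is available and the infimum over $S(x)$ can be genuinely $\Pi^1_1$. My plan is to iterate the construction transfinitely: given $\la x_\beta \ra_{\beta \prec \alpha}$ along a well-order $\prec$, take $x_\alpha$ to be a near-minimizer of $V$ on $\bigcap_{\beta \prec \alpha} S(x_\beta)$, and continue until the sequence stabilizes. Both the impredicative choice of near-minimizers and the detection of stabilization require the full strength of $\pica$. The matching reversals, showing each of the three levels is sharp, are also supplied by \cite{EkelandSelecta}.
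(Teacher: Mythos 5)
The paper does not reprove this theorem: it is imported from \cite{EkelandSelecta}, with the only methodological remark being that the discontinuous cases are handled by reducing to the continuous case via the $\alpha$-envelopes and Lemma~\ref{lemmEnvelopeCrit}. Your treatment of (i) is correct and standard (a global minimizer is automatically a critical point, and $\wkl$ proves that continuous functions on compact spaces attain their infima). The other two items, however, each contain a genuine gap. In (ii) with $V$ continuous but $\spc X$ not compact, the claim that $\inf_{y \in S(x_n)} V(y)$ is ``a $\Sigma^0_2$ quantity'' is where the real work is being hidden: the quantifier $\exists y \in S(x_n)$ ranges over points of $\spc X$, which are infinite objects, and since $S(x_n)$ is a \emph{closed} set you cannot replace this by a search over the countable dense set $X$ --- $X \cap S(x_n)$ need not be dense in $S(x_n)$ and can miss it entirely except for limit points. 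Prima facie the infimum is $\Sigma^1_1$, and the content of the $\aca$ proof is precisely to get around this, e.g.\ by working with open relaxations such as $\{y : d(x,y) + V(y) < V(x) + \varepsilon\}$, whose infima genuinely reduce to the dense set, and controlling the accumulated error in the recursion. As written, your iteration cannot be carried out arithmetically.

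For (iii), the proposed transfinite iteration ``until the sequence stabilizes'' is both unnecessary (the classical Ekeland argument terminates after $\omega$ steps; it is Caristi's original proof of the \emph{fixed point} theorem that iterates $f$ transfinitely) and not available in $\pica$: there is no given well-order along which to recurse, each step is a $\Pi^1_1$ near-minimization, and transfinite recursion with steps of that complexity along arbitrary well-orders outstrips $\pica$. The route the paper indicates is different and avoids iteration over discontinuous data altogether: $\pica$ proves that the envelope $V_{(\alpha)}$ exists for $\alpha > 1$ (Lemma~\ref{lemmLSCCompare}); $V_{(\alpha)}$ is ($\alpha$-Lipschitz, hence) continuous, so the continuous case of (ii) already supplies a critical point of $V_{(\alpha)}$; and Lemma~\ref{lemmEnvelopeCrit} shows that this point is a critical point of $V$ itself. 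Since you invoke exactly this envelope reduction for the compact case of (ii), the fix is to apply it to (iii) as well, with $\pica$ needed only to construct the envelope rather than to drive a transfinite recursion.
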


All of these results reverse, although we won't be needing this.
We remark that~\cite{EkelandSelecta} uses a different presentation of potentials (i.e.~lower semi-continuous functions), but the two are equivalent over $\rca$ (see Appendix~\ref{App}).
The discontinuous cases are established by reducing to continuous cases via envelopes, given the following~\cite{EkelandSelecta}*{Lemma 8.1}.

\begin{Lemma}[$\rca$]\label{lemmEnvelopeCrit}
Let $V$ be any potential and  $\alpha>1$ be such that $V_{(\alpha)}$ exists.
Then, any critical point $x_*$ of $V_{(\alpha)}$ is also a critical point of $V$, and $V_{(\alpha)} (x_*) = V  (x_*)$. 
\end{Lemma}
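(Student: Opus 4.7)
The plan is to split the conclusion into two parts and argue them in order: first show $V_{(\alpha)}(x_*)=V(x_*)$, then deduce that $x_*$ is a critical point of $V$. Note that $V_{(\alpha)}(x)\le V(x)$ for every $x$ (take $y=x$ in the defining infimum), so the first task reduces to ruling out $V_{(\alpha)}(x_*)<V(x_*)$.

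Suppose for contradiction that $\delta:=V(x_*)-V_{(\alpha)}(x_*)>0$. The key input is lower semi-continuity of $V$, which in our formalization comes from the pointwise-increasing continuous approximations $V_n$ with $V=\sup_n V_n$. First I would pick some $V_n$ with $V_n(x_*)>V(x_*)-\delta/4$ and then, using continuity of $V_n$, produce a radius $r>0$ such that $V(y)\ge V_n(y)>V(x_*)-\delta/2$ whenever $d(x_*,y)<r$. Next, choosing $\varepsilon<\min(\delta/4,(\alpha-1)r)$, pick $y$ nearly realizing the infimum defining $V_{(\alpha)}(x_*)$, i.e.\ $V(y)+\alpha d(x_*,y)<V_{(\alpha)}(x_*)+\varepsilon$. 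The neighborhood bound forces $d(x_*,y)\ge r$, for otherwise $V(y)+\alpha d(x_*,y)>V(x_*)-\delta/2$, which is incompatible with $V(y)+\alpha d(x_*,y)<V_{(\alpha)}(x_*)+\varepsilon<V(x_*)-3\delta/4$. Consequently $(\alpha-1)d(x_*,y)\ge(\alpha-1)r>\varepsilon$, and using $V_{(\alpha)}(y)\le V(y)$,
\[
V_{(\alpha)}(x_*)-V_{(\alpha)}(y)\;>\;\alpha\, d(x_*,y)-\varepsilon\;>\;d(x_*,y),
\]
with $y\ne x_*$ since $d(x_*,y)\ge r>0$. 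This contradicts $x_*$ being a critical point of $V_{(\alpha)}$, establishing the equality $V_{(\alpha)}(x_*)=V(x_*)$.

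For the remaining assertion that $x_*$ is a critical point of $V$, pick any $x\ne x_*$. Criticality of $x_*$ for $V_{(\alpha)}$ yields $d(x_*,x)>V_{(\alpha)}(x_*)-V_{(\alpha)}(x)$, and combining $V_{(\alpha)}(x_*)=V(x_*)$ with $V_{(\alpha)}(x)\le V(x)$ rearranges this to $d(x_*,x)>V(x_*)-V(x)$, as required. The main delicate point is the first step: since we are working in $\rca$, the neighborhood lower bound on $V$ must be extracted from a single continuous approximant $V_n$ rather than invoked as an abstract lower semi-continuity property, but this is straightforward given how potentials are coded.
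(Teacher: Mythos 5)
Your argument is correct: the inequality $V_{(\alpha)}\le V$ is immediate, the lower semi-continuity of $V$ (extracted, as you note, from a single continuous approximant $V_n$) forces any near-minimizer $y$ of the envelope infimum at $x_*$ to satisfy $d(x_*,y)\ge r$ when $V_{(\alpha)}(x_*)<V(x_*)$, and the choice $\varepsilon<(\alpha-1)r$ then turns $y$ into a violation of criticality of $x_*$ for $V_{(\alpha)}$, after which the second assertion follows by the one-line comparison $V_{(\alpha)}(x_*)=V(x_*)$, $V_{(\alpha)}(x)\le V(x)$. The paper does not reprove this lemma but imports it from the earlier Ekeland paper (Lemma 8.1 there), and your proof is the standard argument used for that result, so there is nothing further to reconcile.
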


We are now ready to state our formalization of the Caristi fixed point theorem and prove it (and its weakenings) in standard systems of second-order arithmetic.

\begin{Definition}\label{DefFormalCaristi}
A {\em Caristi system} is a tuple $(\spc X, f,V)$, where $\spc X$ is a coded complete separable metric space, $f \colon \spc X \to \spc X$ a coded Baire or Borel function, and $V \colon \spc{X} \to [0, \infty)$ a coded potential.

Given definable classes $\mathfrak X$ of complete, separable metric spaces and $\mathfrak F,\mathfrak V$ of functions such that elements of $\mathfrak F$ are Borel or Baire functions of the form $f\colon \spc X\to \spc X$ and elements of $\mathfrak V$ are potentials of the form $V \colon \spc X \to [0, \infty)$, the {\em (formalized) Caristi fixed point theorem ($\cfp$) for $\spc X\in\mathfrak X$, $f\in\mathfrak F$, and $V\in\mathfrak V$} is the statement that, if $(\spc X,f,V)$ is a Caristi system whose elements belong to the aforementioned classes, then there exists $x_\ast \in \spc X$ such that $x _\ast =f(x_\ast)$.

When not mentioned, we assume that $\mathfrak X$ is the class of all coded separable complete metric spaces, $\mathfrak F$ is the class of all Baire or Borel functions, and $\mathfrak V$ is the class of all coded potentials.
\end{Definition}

The following is a corollary of Theorem \ref{thm-CritPtsExist}, using the fact that the Ekeland variational principle implies Caristi's theorem.

\begin{Proposition}\label{Prop_holds}
The $\cfp$ holds whenever:
\begin{enumerate}[label=(\roman*)]

\item  \textup{(}$\wkl$\textup{)} $\spc X$ is compact and $V$ is continuous.

\item\label{itCarACA}  \textup{(}$\aca$\textup{)} $\spc X$ is compact, $f$ is continuous, or $V$ is continuous.

\item  \textup{(}$\pica$\textup{)} Always.

\end{enumerate}
\end{Proposition}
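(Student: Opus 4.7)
The plan is to deduce Proposition \ref{Prop_holds} as a direct corollary of Theorem \ref{thm-CritPtsExist}, via the classical reduction of Caristi's theorem to Ekeland's variational principle. First I would verify, within $\rca$, the following one-line lemma: in any Caristi system $(\spc X, f, V)$, every critical point $x_*$ of $V$ is a fixed point of $f$. Indeed, the Caristi hypothesis $d(x_*, f(x_*)) \leq V(x_*) - V(f(x_*))$ is exactly the premise of the critical-point property ``if $d(x_*, x) \leq V(x_*) - V(x)$ then $x = x_*$'' instantiated at $x = f(x_*)$, so its conclusion yields $f(x_*) = x_*$ immediately.

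With this lemma in hand, cases (i), (iii), and the two subcases of (ii) in which $\spc X$ is compact or $V$ is continuous are immediate: invoke the corresponding clause of Theorem \ref{thm-CritPtsExist} to obtain a critical point of $V$ in the stated base theory, then apply the lemma. The only remaining subcase is (ii) with $f$ continuous but $V$ only lower semi-continuous and $\spc X$ not assumed compact, for which Theorem \ref{thm-CritPtsExist} does not directly supply EVP in $\aca$. Here my plan is to bypass EVP entirely and use a direct Picard-style iteration: pick any $x_0 \in \spc X$ and inductively set $x_{n+1} = f(x_n)$, so that the Caristi inequality telescopes to $\sum_{n<N} d(x_n, x_{n+1}) \leq V(x_0) - V(x_N) \leq V(x_0)$. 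The orbit is therefore Cauchy and converges by completeness to some $x_* \in \spc X$, and continuity of $f$ gives $f(x_*) = \lim_n f(x_n) = \lim_n x_{n+1} = x_*$.

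The step I expect to require the most care is this last subcase: one needs $\aca$ both to iterate the code for $f$ and produce the orbit $\langle x_n\rangle_{n\in\Nb}$ as a sequence of points, and to access $V(x_n)$ as a real for each $n$, since $V$ is merely Baire class $1$. Once those sequences are available, the Cauchy estimate uses only boundedness of the partial sums and the limit identification is standard for continuous $f$. With these ingredients in place, the proposition amounts to a short list of invocations of Theorem \ref{thm-CritPtsExist} together with one application of the Picard argument, and no additional reverse-mathematical content appears to be needed beyond what is already in~\cite{EkelandSelecta}.
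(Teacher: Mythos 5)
Your proposal is correct and follows essentially the same route as the paper: the easy cases are reduced to Theorem \ref{thm-CritPtsExist} via the observation that critical points of $V$ are fixed points of $f$, and the remaining subcase of (ii) with continuous $f$ is handled by the same Picard-style iteration (the paper attributes it to Peng and Yamazaki), using $\aca$ to form the orbit and the sequence $\la V(x_n)\ra$ and the monotone convergence theorem to get Cauchyness. The points you flag as delicate are exactly the ones the paper addresses.
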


\begin{proof}
Most items follow by using Theorem~\ref{thm-CritPtsExist} and the fact that every critical point of $V$ is a fixed point of $f$.
The exception is \ref{itCarACA} for continuous $f$, but the proof of Peng and Yamazaki~\cite{Peng2017}, itself a version of the classic proof of the Banach fixed point theorem, works in this context.

Let $\langle \spc X,f,V\rangle$ be a Caristi system where $f$ is continuous and $x_0\in \spc X$.
Define a sequence $x_n$ given recursively by $x_{n+1} = f(x) $; this sequence is readily available in $\aca$ using the continuity of $f$.
Similarly, the sequence given by $v_n = V(x_n)$ exists, as $\aca$ can compute suprema uniformly.
Since $d(x_n,f(x_n)) \leq V(x_n) - V(f(x_n)) =  V(x_n) - V(x_{n+1}) $, we must have that $V(x_{n+1}) \leq V(x_n)$, hence $\langle v_n\rangle_{n=0}^\infty$ is a decreasing sequence of real numbers and thus Cauchy.
The inequality $d(x_n,x_m) \leq V(x_n) - V(x_m) $ for $m>n$ implies that $\langle x_n\rangle_{x=0}^\infty$ is also Cauchy, hence it has a limit, say $x_\infty$.
By the continuity of $f$ and the definition of $x_n$ we see that $x_\infty = \lim_{n\to\infty}x_n =
 \lim_{n\to\infty}f(x_n) = f(\lim_{n\to\infty}x_n) = f(x_\infty) $, so indeed, $x_\infty$ is a fixed point of $f$.
\end{proof}

In the rest of this work we will show that all of these items reverse, except for (iii). (Note that the function in (iii) is required, by our definition of a Caristi system, to be Baire or Borel.)

\begin{Proposition}\label{prop_cantorWKL}
The $\cfp$ for $\spc X = 2^\Nb$ and both $f$ and $V$ continuous implies $\wkl$ over $\rca$.
\end{Proposition}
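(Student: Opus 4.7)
The plan is to encode any infinite binary tree $T \subseteq 2^{<\Nb}$ as a Caristi system on $2^\Nb$ whose fixed points are exactly the infinite paths through $T$, and then apply $\cfp$ to produce such a path, concluding $\wkl$. For the potential, I would define $V\colon 2^\Nb \to [0,\infty)$ by $V(x) = \sum_{n \in \Nb} 2^{-n}\mathbf{1}_{x \upharpoonright n \notin T}$, which by prefix-closure of $T$ simplifies to $V(x) = 2^{1-n(x)}$ if $x$ is not a path (where $n(x)$ is the least $n$ with $x\upharpoonright n \notin T$) and $V(x) = 0$ otherwise. This $V$ is $\rca$-computable from $T$, vanishes precisely on the paths through $T$, and is continuous: it is locally constant at each non-path, while near any path $x$, every $x'$ with $x' \upharpoonright k = x \upharpoonright k$ satisfies $V(x') \le 2^{-k}$.

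For $f$, the plan is to define a continuous function that ``repairs'' each non-path $x$ by extending its longest suitable prefix in $T$. Let $m(x) \le n(x) - 1$ be the largest $m$ such that $x \upharpoonright m$ has an extension in $T$ of length $n(x)$ (well-defined since $T$ contains elements of every length); let $\tau(x) \in T$ be the lexicographically leftmost such extension; and set $f(x) = \tau(x) \frown 0^\omega$ for non-paths $x$, and $f(x) = x$ for paths. By maximality of $m(x)$, $\tau(x)$ disagrees with $x$ at position $m(x)$, so $d(x,f(x)) = 2^{-m(x)}$ and any fixed point of $f$ must be a path through $T$. Continuity of $f$ follows from locally constant behaviour on non-paths together with the fact that, near a path $x$, any $x'$ with $x' \upharpoonright k = x \upharpoonright k$ satisfies $m(x') \ge k$ (since $x \upharpoonright n(x') \in T$ extends $x' \upharpoonright k$), forcing $f(x') \upharpoonright k = x \upharpoonright k$.

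The main obstacle is verifying the Caristi inequality $d(x, f(x)) \le V(x) - V(f(x))$, which reduces to ensuring that some ``progress'' measure strictly increases under $f$. The natural candidate $m(\cdot)$ can fail to grow if $T$ has dead ends downstream of $\tau(x)$: if the branch of $T$ containing $\tau(x)$ terminates, then $f(x)$ can be forced onto a region that matches $x$ only on very short prefixes, yielding $m(f(x)) < m(x)$. The remedy is to approximate the pruning of $T$ using the $\rca$-computable subtrees $T^{(N)} = \{\sigma \in T : \sigma \text{ has an extension in } T \text{ of length } |\sigma| + N\}$; one chooses $\tau(x)$ within $T^{(N)}$ for $N$ depending on $n(x)$, extends $\tau(x)$ along leftmost in-$T$ continuations as far as possible before padding with zeros, and uses a potential of the form $V(x) = C \cdot 2^{-m(x)}$ for a suitable constant $C$. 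With these adjustments one verifies $m(f(x)) \ge m(x) + 1$ uniformly, so the Caristi inequality holds; the resulting Caristi system remains continuous and its fixed points are still paths through $T$, so $\cfp$ delivers such a path and thereby $\wkl$.
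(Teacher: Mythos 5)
Your strategy is the reverse of the paper's: you try to encode an arbitrary infinite tree $T$ directly as a Caristi system whose fixed points are the paths of $T$, whereas the paper proves the contrapositive, starting from an infinite tree with \emph{no} path and building a coded Caristi system with no fixed point. This difference is not cosmetic; it is where your argument breaks down. The central problem is that your $f$ and your remedied potential $V(x)=C\cdot 2^{-m(x)}$ are classically continuous but not codable in $\rca$ from $T$. A code for a continuous function must supply, for every point and every precision, an enumerable neighbourhood condition. Near a path $P$ of $T$, certifying that $f$ maps all of $[P\upharpoonright j]$ into $[P\upharpoonright k]$ (equivalently, that $m(x')\geq k$ for every $x'$ extending $P\upharpoonright j$) amounts to asserting that $P\upharpoonright k$ has extensions in $T$ of every length that can occur as an exit level $n(x')$; this is in general a $\Pi^0_1$ condition on $T$ that cannot be enumerated, so the required tuples cannot be placed into the code. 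The same applies to the conditions $V([\sigma])\subseteq[0,\varepsilon)$ needed to code $V$ near paths. (Your first potential $V(x)=2^{1-n(x)}$ \emph{is} codable, precisely because $n(x)\geq j$ is decided by checking $x\upharpoonright j\in T$; but, as you note, it fails the Caristi inequality. The $m$-based objects are exactly the ones that smuggle in the non-computable predicate ``$\sigma$ is extendible in $T$''.)

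Second, even classically the key claim $m(f(x))\geq m(x)+1$ is not established. Choosing $\tau(x)\in T^{(N)}$ only guarantees survival for $N$ further levels, but you then extend $\tau(x)$ along leftmost continuations in $T$ ``as far as possible''; if that branch dies at a level $n(f(x))>|\tau(x)|+N$ in a region where the whole subtree of $T$ above $f(x)\upharpoonright(m(x)+1)$ has already terminated, then $m(f(x))$ can fall below $m(x)$ and the Caristi inequality fails again. This is the same dead-end phenomenon you identified in your first attempt, pushed one level up. The paper's contrapositive avoids both obstacles at once: when $T$ has no path, every point of $2^\Nb$ extends an element of the computable antichain $\tilde{T}^\circ\cup S$, the data $A_\sigma$, $i_\sigma$, $\sigma_+$ are produced by bounded searches, and the resulting $f$ and $V$ are locally constant on a computable clopen partition, hence honestly coded, forming a Caristi system with no fixed point. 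As written, your proposal has a genuine gap, and repairing it would require a fundamentally different mechanism for making $f$ and $V$ codable near $[T]$.
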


\begin{proof}
We prove the contrapositive over $\rca$.  Assume that $\wkl$ fails, and let $T \subseteq 2^{<\Nb}$ be an infinite binary tree with no infinite path.  We define continuous $f \colon 2^\Nb \to 2^\Nb$ and $V \colon 2^\Nb \to [0,3]$ so that $(2^\Nb, f, V)$ is a Caristi system with no fixed point.  Our $V$ is the potential from the proof that Ekeland's variational principle for continuous potentials on $2^\Nb$ implies $\wkl$ of~\cite{EkelandSelecta}*{Proposition~9.1}.

As in the proof of~\cite{EkelandSelecta}*{Proposition~9.1}, let $T^\circ = \{\sigma \in T : \sigma^\frown 0, \sigma^\frown 1\notin T\}$ be the set of leaves of $T$.  The set $T^\circ$ is infinite because $T$ is infinite but has no path.  For each $\sigma \in T^\circ$, let
\begin{align*}
A_\sigma = \{i < |\sigma| : \neg(\exists \tau \in T)(|\tau| = |\sigma|+1 \wedge \tau \sqsupseteq (\sigma \upharpoonright i)^\frown(1-\sigma(i))\}.
\end{align*}
For each $\sigma \in 2^{<\Nb}$, let $\tilde{\sigma} \in 2^{<\Nb}$ be the sequence of length $2|\sigma|$ where $\tilde{\sigma}(2i) = 0$ and $\tilde{\sigma}(2i+1) = \sigma(i)$ for all $i < |\sigma|$.  Define $\tilde{T} = \{\tilde{\sigma} : \sigma \in T\}$, $\tilde{T}^\circ = \{\tilde{\sigma} : \sigma \in T^\circ\}$, and
\begin{align*}
S = \{\tau\in 2^{<\Nb} : (\forall \sigma \in T)(|\sigma| \leq |\tau| \imp \tau \not\sqsubseteq \tilde{\sigma}) \wedge (\exists \sigma \in T)(|\sigma| \leq |\tau| \wedge \tau \upharpoonright (|\tau|-1) \sqsubset \tilde{\sigma})\}.
\end{align*}
The set $S$ consists of the shortest binary sequences that move away from $\tilde{T}$ before reaching an element of $\tilde{T}^\circ$.  The elements of $\tilde{T}^\circ \cup S$ are pairwise incomparable, and the fact that $T$ has no infinite path implies that for every $x \in 2^\Nb$, there is a $\sigma \in \tilde{T}^\circ \cup S$ with $\sigma \sqsubseteq x$.  Define the continuous function $V \colon 2^\Nb \to [0,3]$ by
\begin{align*}
V(x) =
\begin{cases}
 2 - \sum_{i \in A_\sigma} 2^{-2i} & \text{if $x \sqsupseteq \tilde{\sigma}$ for a $\sigma\in T^\circ$}\\
 3 & \text{if $x \sqsupseteq \tau$ for a $\tau \in S$}.
\end{cases}
\end{align*}
One may obtain a code for $V$ via~\cite{EkelandSelecta}*{Lemmas~3.8~and~3.9}.

Before defining $f$, observe that for any $\sigma \in T^\circ$, there is a $\tau \in T$ with $|\tau| = |\sigma|+1$ because $T$ is infinite.  Such a $\tau$ does not extend $\sigma$ because $\sigma$ is a leaf, so there is an $i < |\sigma|$ such that $\tau \sqsupseteq (\sigma \upharpoonright i)^\frown(1-\sigma(i))$.  Thus there is an $i < |\sigma|$ with $i \notin A_\sigma$.  Given $\sigma \in T^\circ$, let $i_\sigma$ be greatest such that $i_\sigma < |\sigma|$ and $i_\sigma \notin A_\sigma$, and let $\sigma_+$ be the first element of $T^\circ$ in lexicographic order with $|\sigma_+| > |\sigma|$ and $\sigma_+ \sqsupseteq (\sigma \upharpoonright i_\sigma)^\frown(1-\sigma(i_\sigma))$.  Then $i_\sigma \in A_{\sigma_+}$ because any $\tau \in T$ with $\tau \sqsupseteq (\sigma_+ \upharpoonright i_\sigma)^\frown(1-\sigma_+(i_\sigma))$ satisfies $\tau \sqsupseteq \sigma \upharpoonright (i_\sigma + 1)$ and therefore also satisfies $|\tau| \leq |\sigma| < |\sigma_+|$ by the maximality of $i_\sigma$.  Furthermore, if $i < i_\sigma$ and $i \in A_\sigma$, then $i \in A_{\sigma_+}$ as well because $\sigma_+ \upharpoonright i_\sigma = \sigma \upharpoonright i_\sigma$ and $|\sigma| < |\sigma_+|$.

Fix the first $\eta \in T^\circ$.  Define the continuous function $f \colon 2^\Nb \to 2^\Nb$ following $V$ by
\begin{align*}
f(x) = 
\begin{cases}
{\tilde{\sigma}_+}{}^\frown 0^\Nb & \text{if $x \sqsupseteq \tilde{\sigma}$ for a $\sigma \in T^\circ$}\\
{\tilde{\eta}}^\frown 0^\Nb & \text{if $x \sqsupseteq \tau$ for a $\tau \in S$}.
\end{cases}
\end{align*}
Again, one obtains a code for $f$ via~\cite{EkelandSelecta}*{Lemmas~3.8~and~3.9}.
The function $f$ has no fixed point.
However, $(2^\Nb, f, V)$ is a Caristi system.
Let $x \in 2^\Nb$.
If $x \sqsupseteq \tilde{\sigma}$ for a $\sigma \in T^\circ$, then $d(x, f(x)) = 2^{-2i_\sigma-1}$ and
\begin{align*}
V(x) - V(f(x)) &= \sum_{i \in A_{\sigma_+}}2^{-2i} - \sum_{i \in A_\sigma}2^{-2i} \geq 2^{-2i_\sigma} - \sum_{\substack{i \in A_\sigma \\ i > i_\sigma}}2^{-2i}\\
&\geq 2^{-2i_\sigma} - 2^{-2i_\sigma - 1} = 2^{-2i_\sigma - 1} = d(x, f(x)),
\end{align*}
where the first inequality is because $i_\sigma \in A_{\sigma_+} \setminus A_\sigma$ and because $i \in A_\sigma \imp i \in A_{\sigma_+}$ for $i < i_\sigma$.  If instead $x \sqsupseteq \tau$ for a $\tau \in S$, then
\begin{align*}
V(x) - V(f(x)) = 1 + \sum_{i \in A_\eta}2^{-2i} \geq d(x, f(x)).
\end{align*}
Therefore $(2^\Nb, f, V)$ is a Caristi system with no fixed point, which completes the proof.  
\end{proof}

Peng and Yamazaki~\cite{Peng2017} showed that $\aca$ is equivalent to the $\cfp$ for continuous $f$ and $V$.
We sharpen this result by showing that we may assume that $\spc X$ is the Baire space.

\begin{Proposition}\label{prop-CaristiContinuousReversal}
The $\cfp$ for $\spc X = \Nb^\Nb$ and both $f$ and $V$ continuous implies $\aca$ over $\rca$.
\end{Proposition}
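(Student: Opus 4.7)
Given an injective $g\colon\Nb\to\Nb$, the plan is to construct, in $\rca$, a Caristi system $(\Nb^\Nb,f,V)$ with $f$ and $V$ continuous, such that every fixed point of $f$ will encode $\ran(g)$ in a $\Delta^0_1$-decodable way. Since $\aca$ over $\rca$ is equivalent to the existence of the range of every injective function on $\Nb$, applying $\cfp$ to this system will produce $\ran(g)$ as a set, giving the reversal. I will follow the overall pattern of Proposition~\ref{prop_cantorWKL}, with the tree $T$ there replaced by $g$ and the space $2^\Nb$ replaced by the noncompact $\Nb^\Nb$.

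First I would define the potential $V(x)=\sum_{n\in\Nb} 2^{-n}\psi_n(x(n))$, where $\psi_n(0)=1$, $\psi_n(s+1)=0$ when $g(s)=n$, and $\psi_n(s+1)=2$ when $g(s)\neq n$. This is continuous and $\rca$-codable, being a uniformly convergent sum of functions each depending on a single coordinate and on a decidable property of $g$. Next, $f$ would perform a single coordinate-wise repair per step: at the least coordinate $n$ where $x(n)>0$ and $g(x(n)-1)\neq n$ (a \emph{B-defect}), $f$ resets $x(n)$ to $0$; and at the least coordinate where $x(n)=0$ and an $s$ with $g(s)=n$ has been exposed through the current state of $x$ (an \emph{A-defect}), $f$ inserts that witness as $s+1$. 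The weights $0,1,2$ for $\psi_n$ are chosen precisely so that B-repairs drop $V$ by $2\cdot 2^{-n}$ and A-repairs drop it by $2^{-n}$, both at least matching the displacement $d(x,f(x))=2^{-n}$ required by the Caristi inequality.

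The hard part will be maintaining continuity of $f$ in $\rca$: a naive A-type repair would require an unbounded $\Sigma^0_1$ search through $g$, which cannot be built into a $\rca$-coded continuous function. The noncompactness of $\Nb^\Nb$ is what will resolve this: unlike in $2^\Nb$, the magnitude of $x(n)$ can carry, alongside any putative witness, the \emph{search depth} already conducted at coordinate $n$, and the A-defect at $n$ is only declared once this depth has exposed a witness for $n$—a decidable condition on finite data in $x$ and $g$. Ordering repairs consistently by coordinate position then yields a $\rca$-computable modulus of continuity for $f$. I expect the principal obstacle to be combining all three requirements simultaneously—$\rca$-codability of $f$, the Caristi inequality at every non-fixed $x$, and forcing every fixed point $x_*$ to satisfy both $\{n:x_*(n)>0\}\subseteq\ran(g)$ (from the absence of B-defects, which forces $g(x_*(n)-1)=n$ whenever $x_*(n)>0$) and $\ran(g)\subseteq\{n:x_*(n)>0\}$ (from the absence of A-defects, which in the limit of accumulated search depth closes every $n\in\ran(g)$). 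Once this is arranged, $\cfp$ supplies $x_*\in\Nb^\Nb$, and $\ran(g)=\{n:x_*(n)>0\}$ becomes $\Delta^0_1$-definable from $x_*$ and $g$, whence $\ran(g)$ exists in $\rca$ and $\aca$ follows.
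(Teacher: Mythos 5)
Your strategy differs from the paper's: the paper argues contrapositively (if $\aca$ fails, the tree $T$ built from an injection $h$ as in \cite{SimpsonSOSOA}*{Theorem~III.7.2} has no path, and one constructs a continuous Caristi system on $\Nb^\Nb$ with \emph{no} fixed point), whereas you want to build a system whose fixed points compute $\ran(g)$ directly. The $B$-direction of your argument is fine, and your $V$ is a legitimate continuous potential (though note that a $B$-repair drops $V$ by $(2-1)\cdot 2^{-n}=2^{-n}$, not $2\cdot 2^{-n}$). The genuine gap is in the $A$-direction. The condition ``$n\in\ran(g)$'' is $\Sigma^0_1$, so a continuous, $\rca$-coded $f$ can only declare an $A$-defect at coordinate $n$ after a search through $g$ whose length is bounded by finite data read off from $x$. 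Consequently any point carrying insufficient search depth is fixed: most simply $x=0^\Nb$, where no coordinate is positive (so there is no $B$-defect) and whatever depth is ``exposed through the current state of $x$'' is some fixed finite bound, so all but finitely many $n\in\ran(g)$ have unexposed witnesses and trigger no $A$-defect. Then $f(0^\Nb)=0^\Nb$ while $\{n:x(n)>0\}=\varnothing\neq\ran(g)$. Your phrase ``in the limit of accumulated search depth'' describes the orbit $f^k(x_0)$, not an arbitrary fixed point; $\cfp$ hands you \emph{some} fixed point, and nothing forces it to be one at which the search has actually been carried out.

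Nor can you repair this by having $f$ increment the search depth at coordinate $n$ whenever no witness has yet appeared: if that depth is carried by $x(n)$ itself (or by any single coordinate $m$), each increment displaces $x$ by $2^{-m}$ and the Caristi inequality forces $V$ to drop by at least $2^{-m}$ per step; for $n\notin\ran(g)$ the increments never terminate, so iterating $f$ drives $V$ below $0$, contradicting $V\geq 0$. Escaping this forces the ``depth searched so far'' to be certified by coordinates of geometrically shrinking weight --- which is exactly what the paper's tree does: membership of $x\upharpoonright k$ in $T$ certifies that the witnesses for all $n<k$ have been searched up to $k$, and the map $\sigma\mapsto\sigma_+$ extends that certificate at a cost of only $2^{-|\sigma|}$. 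At that point you have essentially reconstructed the paper's construction, and the contrapositive framing is what then dissolves the lazy-fixed-point problem entirely: since $T$ has no path, every $x\in\Nb^\Nb$ exhibits a \emph{finite} defect (its longest initial segment in $T$), $f$ pushes strictly past it, and no fixed point exists at all. As written, your proposal does not yet contain the idea needed to rule out under-searched fixed points, and that idea is the heart of the reversal.
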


\begin{proof}
We prove the contrapositive over $\rca$.  Assume that $\aca$ fails, and let $h \colon \Nb \to \Nb$ be an injection whose range does not exist as a set.  Let $T \subseteq \Nb^{<\Nb}$ be the tree constructed from $h$ as in the proof that K\"{o}nig's lemma implies $\aca$ of~\cite{SimpsonSOSOA}*{Theorem~III.7.2}.  This $T$ is the set of all $\sigma \in \Nb^{<\Nb}$ such that $(\forall m < |\sigma|)(\forall n < |\sigma|)(h(m) = n \biimp \sigma(n) = m+1)$ and $(\forall n < |\sigma|)(\sigma(n) > 0 \imp h(\sigma(n)-1) = n)$.  The tree $T$ does not have an infinite path because the range of $h$ does not exist as a set.  We define continuous $f \colon \Nb^\Nb \to \Nb^\Nb$ and $V \colon \Nb^\Nb \to [0,3]$ so that $(\Nb^\Nb, f, V)$ is a Caristi system with no fixed point.

To every $\sigma \in T$, assign a $\sigma_+ \in T$ with $|\sigma_+| > |\sigma|$ and $\{n < |\sigma_+| : \sigma_+(n) > 0\} \supseteq \{n < |\sigma| : \sigma(n) > 0\}$ as follows.  Given $\sigma \in T$, let $k = \max\{|\sigma|, \max\{\sigma(n) : n < |\sigma|\}\}$.  Let $X$ be the finite set  $X = \{h(m) : m \leq k\}$, and let $\sigma_+$ be the sequence of length $k+1$ where for $n \leq k$  
\begin{align*}
\sigma_+(n) =
\begin{cases}
m+1 & \text{if $n \in X$ and $h(m) = n$}\\
0 & \text{if $n \notin X$}.
\end{cases}
\end{align*}
Then $\sigma_+ \in T$.  Furthermore, if $n < |\sigma|$ and $\sigma(n) > 0$, then $\sigma(n) \leq k$, so $h(\sigma(n)-1) = n \in X$, so $\sigma_+(n) = \sigma(n) > 0$.

Define continuous $f \colon \Nb^\Nb \to \Nb^\Nb$ and $V \colon \Nb^\Nb \to [0,3]$ as follows, using~\cite{EkelandSelecta}*{Lemmas~3.8~and~3.9} to obtain the codes.
Given $x \in \Nb$, let $\sigma \sqsubseteq x$ be the longest initial segment with $\sigma \in T$, which exists because $x$ is not a path through $T$.  Let
\begin{align*}
V(x) &= 1 + 2^{-|\sigma| + 1} - \sum_{\substack{n < |\sigma| \\ \sigma(n) > 0}}2^{-n} &
f(x) &= {\sigma_+}^\frown 0^\Nb.
\end{align*}
Notice that $f(x) \neq x$ because $\sigma_+ \in T$ but $x \upharpoonright |\sigma_+| \notin T$ as $|\sigma_+| > |\sigma|$.  So $f$ has no fixed point.  To see that $d(x, f(x)) \leq V(x) - V(f(x))$, first observe that the longest initial segment $\tau \sqsubseteq f(x)$ with $\tau \in T$ has the form $\tau = {\sigma_+}^\frown 0^\ell$ for some $\ell$ because $\sigma_+ \in T$.  Therefore
\begin{align*}
V(f(x)) = 1 + 2^{-|\tau|+1} - \sum_{\substack{n < |\tau| \\ \tau(n) > 0}}2^{-n} \leq 1 + 2^{-|\sigma_+| + 1} - \sum_{\substack{n < |\sigma_+| \\ \sigma_+(n) > 0}}2^{-n}
\end{align*}
because $|\sigma_+| \leq |\tau|$ and $\{n < |\sigma_+| : \sigma_+(n) > 0\} = \{n < |\tau| : \tau(n) > 0\}$.  Thus
\begin{align*}
V(x) - V(f(x)) &\geq 2^{-|\sigma| + 1} - 2^{-|\sigma_+| + 1} + \sum_{\substack{n < |\sigma_+| \\ \sigma_+(n) > 0}}2^{-n} - \sum_{\substack{n < |\sigma| \\ \sigma(n) > 0}}2^{-n}\\
&\geq 2^{-|\sigma|} + \sum_{\substack{n < |\sigma_+| \\ \sigma_+(n) > 0}}2^{-n} - \sum_{\substack{n < |\sigma| \\ \sigma(n) > 0}}2^{-n},
\end{align*}
where the second inequality is because $|\sigma| < |\sigma_+|$.  If $\sigma \sqsubseteq \sigma_+$, then $d(x, f(x)) = 2^{-|\sigma|}$, so
\begin{align*}
d(x, f(x)) = 2^{-|\sigma|} \leq 2^{-|\sigma|} + \sum_{\substack{n < |\sigma_+| \\ \sigma_+(n) > 0}}2^{-n} - \sum_{\substack{n < |\sigma| \\ \sigma(n) > 0}}2^{-n} \leq V(x) - V(f(x)),
\end{align*}
where the first inequality holds because $\{n < |\sigma_+| : \sigma_+(n) > 0\} \supseteq \{n < |\sigma| : \sigma(n) > 0\}$.  If $\sigma \not\sqsubseteq \sigma_+$, then let $j < |\sigma|$ be least with $\sigma(j) \neq \sigma_+(j)$.  It must be that $\sigma(j) = 0$ and $\sigma_+(j) > 0$ because if $\sigma(j) > 0$, then $\sigma_+(j) > 0$ as well, in which case $\sigma(j) = \sigma_+(j)$ as both $\sigma$ and $\sigma_+$ are in $T$.  Therefore
\begin{align*}
2^{-j} \leq \sum_{\substack{n < |\sigma_+| \\ \sigma_+(n) > 0}}2^{-n} - \sum_{\substack{n < |\sigma| \\ \sigma(n) > 0}}2^{-n}
\end{align*}
because $\sigma_+(j) > 0$, $\sigma(j) = 0$, and $\{n < |\sigma_+| : \sigma_+(n) > 0\} \supseteq \{n < |\sigma| : \sigma(n) > 0\}$.  Thus
\begin{align*}
d(x, f(x)) = 2^{-j} \leq 2^{-|\sigma|} + \sum_{\substack{n < |\sigma_+| \\ \sigma_+(n) > 0}}2^{-n} - \sum_{\substack{n < |\sigma| \\ \sigma(n) > 0}}2^{-n} \leq V(x) - V(f(x)).
\end{align*}
So $d(x,f(x)) \leq V(x) - V(f(x))$ in both cases.  Therefore $(\Nb^\Nb, f, V)$ is a Caristi system with no fixed point, which completes the proof.
\end{proof}

\begin{Proposition}\label{propBaire1toACA}
The $\cfp$ for $[0,1]$ with Baire class $1$ $f$ implies $\aca$ over $\rca$.
\end{Proposition}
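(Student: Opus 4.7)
Following the pattern of Proposition~\ref{prop-CaristiContinuousReversal}, we prove the contrapositive: assume $\aca$ fails, fix an injection $h\colon\Nb\to\Nb$ whose range does not exist as a set, and re-use the tree $T\subseteq\Nb^{<\Nb}$ together with the map $\sigma\mapsto\sigma_+$ from that proof, so that $T$ has no infinite path in $\rca$. We shall construct a Caristi system $([0,1],f,V)$ with $f$ Baire class~$1$ (and $V$ continuous) having no fixed point, refuting the hypothesised instance of $\cfp$. The central difficulty is compactness: on $[0,1]$ the Banach-style iteration used in Proposition~\ref{Prop_holds}(ii) already produces a fixed point whenever $f$ is continuous, so the Baire class~$1$ assumption must be exploited precisely to let $f$ jump away from the limit of its own orbit.

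The construction proceeds in three stages. First, embed $T$ into $[0,1]$ via a computable family of pairwise disjoint open dyadic subintervals $(I_\sigma)_{\sigma\in T}$ with $I_{\sigma_+}\subsetneq I_\sigma$ and $\mathrm{diam}(I_\sigma)\leq 2^{-|\sigma|}$, chosen so that the metric estimates from the Baire-space proof transfer under the encoding up to constants. Second, define $f$ as a pointwise limit of continuous approximants $f_k\colon[0,1]\to[0,1]$, where $f_k$ depends only on $h(0),\dots,h(k)$ and uses bump-function partitions of unity: on each $I_\sigma$ with $|\sigma_+|\leq k$, $f_k$ sends $x$ toward the centre of $I_{\sigma_+}$, smoothed continuously across interval boundaries, and sends points outside $\bigcup_{|\sigma_+|\leq k}I_\sigma$ to a fixed base point. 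Because $T$ has no path in $\rca$, every $x\in[0,1]$ lies in only finitely many of the $I_\sigma$'s, so a $T$-maximal $\sigma$ with $x\in I_\sigma$ exists, and $f_k(x)$ stabilises once $k>|\sigma_+|$; hence $f(x)=\lim_k f_k(x)$ is everywhere defined and is a Baire class~$1$ function in the present model. Third, define a continuous potential $V\colon[0,1]\to[0,\infty)$ that reproduces the Peng--Yamazaki potential $V(x)=1+2^{-|\sigma|+1}-\sum_{n<|\sigma|,\,\sigma(n)>0}2^{-n}$ on each $I_\sigma$, with continuous interpolation across interval boundaries and a chosen large baseline outside the $I_\sigma$'s while the base point receives $V$-value $0$. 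The Caristi inequality $d(x,f(x))\leq V(x)-V(f(x))$ then reduces level by level to the telescoping estimate carried out in the proof of Proposition~\ref{prop-CaristiContinuousReversal}, with constants absorbed into the scaling of the intervals, together with a boundary check using the large gap between the baseline $V$-value and the base point.

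Finally, $f$ has no fixed point. If $x_*\in I_\sigma$ for the $T$-maximal $\sigma$ containing $x_*$, then by construction $f(x_*)\in I_{\sigma_+}$ while $x_*\notin I_{\sigma_+}$ by maximality, so $f(x_*)\neq x_*$. If $x_*$ lies in no $I_\sigma$, the interpolating portion of $f$ sends it to the base point, chosen distinct from such boundary $x_*$. The remaining possibility—that $x_*$ belongs to $I_{\sigma_n}$ for an infinite strictly ascending chain $\sigma_0\secsub\sigma_1\secsub\cdots$ in $T$—is ruled out in $\rca$ under our hypothesis, since such a chain is recoverable from $x_*$ via the dyadic structure and would yield an infinite path through $T$. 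The main technical obstacle will be designing the bump-function interpolation on $[0,1]\setminus\bigcup_{\sigma\in T}I_\sigma$ so that $f$ is genuinely Baire class~$1$, the Caristi inequality survives at the interval boundaries, and no fixed point is accidentally introduced at or near the base point; the flexibility of Baire class~$1$ and the slack in the Peng--Yamazaki potential must be exploited together.
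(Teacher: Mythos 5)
Your approach has a fatal structural flaw before any of the technical details: you propose to build a Caristi system on $[0,1]$ with \emph{continuous} $V$ and no fixed point from the failure of $\aca$. This is impossible. By Proposition~\ref{Prop_holds}(i), $\wkl$ already proves the $\cfp$ for compact $\spc X$ and continuous $V$ with \emph{arbitrary} Baire or Borel $f$ (any critical point of $V$, which Ekeland's principle supplies on compact spaces in $\wkl$, is automatically a fixed point of $f$). Since there are models of $\wkl+\neg\aca$, and your construction uses only an injection $h$ whose range fails to exist, carrying it out in such a model would contradict Proposition~\ref{Prop_holds}(i); so the potential cannot be continuous, and the discontinuity of $V$ is not an optional convenience here but the entire engine of the reversal. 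A second, concrete manifestation of the same problem is your base point $b$ with $V(b)=0$: since $V\geq 0$, the Caristi inequality at $b$ gives $0\leq d(b,f(b))\leq V(b)-V(f(b))=-V(f(b))\leq 0$, hence $f(b)=b$ and your system has a fixed point after all (more generally, any point attaining $\inf V$ is forced to be fixed). There are further difficulties with the embedding itself---the nodes $\sigma_+$ of Proposition~\ref{prop-CaristiContinuousReversal} need not extend $\sigma$, which is incompatible with $I_{\sigma_+}\subsetneq I_\sigma$ and with the pairwise disjointness you assume, and the infinitely many disjoint intervals must accumulate in the compact space $[0,1]$, where the limiting $V$-value along an orbit would encode the range of $h$---but the two points above already sink the plan.

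The paper's proof abandons the tree entirely. It uses the fact that over $\rca$, $\aca$ is equivalent to ``every strictly increasing sequence of rationals in $[0,1]$ has a supremum,'' takes such a sequence $\sel c_n\ser_{n\in\Nb}$ with no supremum, and defines the genuinely lower semi-continuous potential $V(x)=2$ if $x<c_n$ for some $n$ and $V(x)=x$ otherwise, together with a Baire class~$1$ map $f$ that sends points below the missing supremum up to $1$ and walks points above it strictly downward through a canonically chosen descending sequence of rationals. The ``missing point'' toward which the dynamics would have to converge is the nonexistent supremum; this is what replaces the missing path through $T$ in the compact setting. If you want to salvage a tree-based argument, you would at minimum need a discontinuous $V$ and a way to make the would-be limit of the orbit a point that does not exist in the model, which on $[0,1]$ essentially forces you back to a missing supremum.
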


\begin{proof}
We work in $\rca$ and prove the contrapositive.  Over $\rca$, $\aca$ is equivalent to the monotone convergence theorem (see~\cite{SimpsonSOSOA}*{Theorem~III.2.2}).  In fact, by inspecting the proof of~\cite{SimpsonSOSOA}*{Theorem~III.2.2}, $\aca$ is equivalent to the statement ``every strictly increasing sequence of rationals in $[0,1]$ has a supremum.''  Thus we let $\sel c_n \ser_{n \in \Nb}$ be a strictly increasing sequence of rationals in $[0,1]$ with no supremum,
and we define a Caristi system $\langle [0,1], f,V \rangle$ with $f$ Baire class $1$, but with no fixed points.

First we define $V$.
Let $V_n \colon [0,1] \to [0,2]$ be a piecewise linear function such that $V_n(x) = 2$ for $x\leq c_n$, $V_n(x) = x$ for $x\geq c_{n+1}$, and $V_n $ descends linearly on $[c_n,c_{n+1}]$.
Let $V = \lim_{n\to\infty} V_{n}$.
Clearly the functions $V_n$ are continuous and increasing on $n$, so $V$ is a potential and it is easy to see that
\begin{align*}
V(x) =
\begin{cases}
2 & \text{if $\exists n(x < c_n)$}\\
x & \text{otherwise}.
\end{cases}
\end{align*}

Now we define $f$, show that $([0,1], f,V)$ is a Caristi system, and show that $f$ has no fixed points.  Define a sequence $\sel f_n \ser_{n \in \Nb}$ of piecewise-linear functions $f_n \colon [0,1] \imp [0,1]$ as follows.  For each $n$, first find the sequence $\sel q^n_i \ser_{i \leq n+2}$, where
\begin{itemize}
\item $q^n_0 = 1$;
\item for each $i \leq n$, $q^n_{i+1}$ is the rational in $(c_n, q^n_i)$ with the least code;
\item $q^n_{n+2} = c_n$.
\end{itemize}
Now define $f_n$ so that
\begin{itemize}
\item for $i \leq n$, $f_n$ is linear on $[q^n_{i+1}, q_i^n]$ with $f(q^n_{i+1}) = q^n_{i+2}$ and $f(q^n_i) = q^n_{i+1}$;

\item $f_n$ is linear on $[q^n_{n+2}, q^n_{n+1}]$ with $f(q^n_{n+2}) = 1$ and $f(q^n_{n+1}) = q^n_{n+2}$;

\item $f_n$ is constantly $1$ on $[0, q^n_{n+2}]$.
\end{itemize}
We show, for every $x \in [0,1]$, that $f(x) = \lim_{n \imp \infty}f_n(x)$ exists, that $d(x,f(x)) \leq V(x) - V(f(x))$, and that $f(x) \neq x$.  Let $x \in [0,1]$.  First suppose that there is an $n_0$ such that $x < c_{n_0}$.  In this case, $f_n(x) = 1$ for all $n \geq n_0$, so $\lim_{n \imp \infty}f_n(x) = 1$.  Thus $f(x) = 1 \neq x$.  Moreover, $d(x, f(x)) \leq 1 = 2 - 1 = V(x) - V(f(x))$.

Now suppose that $\forall n (c_n \leq x)$.  As $x$ is not the supremum of $\sel c_n \ser_{n \in \Nb}$, there is a rational $v$ such that $v \leq x$ and $\forall n (c_n \leq v$).  Thus by $\iso$ in the guise of the $\Pi^0_1$ least number principle, there is such a $v$ whose code is least.  Similarly, there is a rational $u$ whose code is least such that $u < v$ and $\forall n (c_n \leq u$).  Let $Q$ be the finite set of rationals $\geq u$ whose codes are at most the code of $u$.  Let $\sel q_i \ser_{i \leq m}$ be the longest sequence of elements from $Q \cup \{1\}$ where $q_0 = 1$ and, for each $i < m$, $q_{i+1}$ is the element of $[0, q_i) \cap Q$ with the least code.  Observe that $q_m = u$  and $q_{m-1} = v$.  Now, by the choice of $u$, let $n_0 \geq m$ be large enough so that $p \leq c_{n_0}$ for every rational $p \in [0,u)$ whose code is less than the code of $u$.  Then $n \geq n_0$ implies that $(\forall i \leq m)(q^n_i = q_i)$.  Thus, as $x \in [q_{m-1}, q_{m-2}]$, $n \geq n_0$ implies that $f_n(x) = \frac{q_{m-1} - q_m}{q_{m-2} - q_{m-1}}(x - q_{m-1}) + q_m$.  So $f(x) = \lim_{n \imp \infty} f_n(x) = \frac{q_{m-1} - q_m}{q_{m-2} - q_{m-1}}(x - q_{m-1}) + q_m$.  We thus have that $q_m \leq f(x) < x$.  Hence $\forall n (c_n \leq q_m \leq f(x))$ because $q_m = u$.  Therefore $V(x) = x$ and $V(f(x)) = f(x)$, so $d(x, f(x)) = x - f(x) = V(x) - V(f(x))$.

We have now defined a potential $V \colon [0,1] \imp [0, \infty)$ and a Baire class $1$ function $f \colon [0,1] \imp [0,1]$ such that $([0,1], f,V)$ is a Caristi system but such that $f$ has no fixed points.  This completes the proof.
\end{proof}

Let us put together some of our results so far:

\begin{Theorem}[$\rca$]
The following are equivalent:

\begin{enumerate}[label=(\alph*)]

\item \label{ItWKL} $\wkl$;

\item the $\cfp$ for continuous $V$ and compact $\spc X$;

\item the $\cfp$ for continuous $f,V$ on the Cantor space.

\end{enumerate}

\end{Theorem}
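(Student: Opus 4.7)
The plan is to establish the equivalence by chaining together results already in the excerpt, so no new constructions are needed. I would organize the proof as a short circular implication $\ref{ItWKL} \Rightarrow (b) \Rightarrow (c) \Rightarrow \ref{ItWKL}$.

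For $\ref{ItWKL} \Rightarrow (b)$, I would simply invoke Proposition~\ref{Prop_holds}(i), which already gives the $\cfp$ for compact $\spc X$ and continuous $V$ from $\wkl$. Note that a continuous function between metric spaces is, in our coding scheme, a particular kind of Baire/Borel function, so the hypothesis of Proposition~\ref{Prop_holds}(i) does not require continuity of $f$; thus (b) follows directly.

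For $(b) \Rightarrow (c)$, I would observe that $\rca$ proves that the Cantor space $2^\Nb$ is compact (this is noted right after the definition of compact metric spaces in Section~\ref{sec-MetricDefs}). Hence any Caristi system of the form $(2^\Nb, f, V)$ with $f$ and $V$ continuous is in particular a Caristi system with compact $\spc X$ and continuous $V$, and (b) applies to produce a fixed point.

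Finally, $(c) \Rightarrow \ref{ItWKL}$ is exactly the content of Proposition~\ref{prop_cantorWKL}, which gives $\wkl$ from the $\cfp$ for $\spc X = 2^\Nb$ with both $f$ and $V$ continuous. There is no real obstacle here: the theorem is essentially a bookkeeping statement that packages the three propositions already proved. The only small point to verify is that our Caristi systems with continuous $f$ can be treated as instances of Caristi systems with Borel/Baire $f$, which is immediate from the coding conventions established in Section~\ref{sec-MetricDefs}.
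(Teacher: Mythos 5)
Your proposal is correct and matches the paper's proof exactly: (a)$\Rightarrow$(b) is Proposition~\ref{Prop_holds}, (c) is a special case of (b) since $\rca$ proves the Cantor space is compact, and (c)$\Rightarrow$(a) is Proposition~\ref{prop_cantorWKL}. The paper's own argument is precisely this chaining of the cited propositions.
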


\begin{proof}
  That (a) implies (b) is Proposition \ref{Prop_holds}, (c) is a special case of (b), and (c) implies (a) is Proposition \ref{prop_cantorWKL}.
\end{proof}

\begin{Theorem}[$\rca$]\label{TheoCritEquivACA}
The following are equivalent:

\begin{enumerate}[label=(\alph*)]

\item \label{ItACA} $\aca$;

\item the $\cfp$ for either $f$ or $V$ continuous;

\item the $\cfp$ for compact $\spc X$;

\item the $\cfp$ for Baire class $1$ $f$ on $[0,1]$;

\item the $\cfp$ for continuous $f,V$ on the Baire space.

\end{enumerate}

\end{Theorem}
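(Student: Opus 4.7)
The plan is to close the chain of equivalences by stitching together the propositions already proved in this section, with no new work beyond verifying specialization relationships between the different clauses.

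First I would handle the forward implications from $\aca$. Each of (b), (c), (d), and (e) is a direct instance of Proposition \ref{Prop_holds}\ref{itCarACA}: for (b), at least one of $f,V$ is continuous by hypothesis; for (c), the space is compact by hypothesis; for (d), the ambient space $[0,1]$ is compact; and for (e), both $f$ and $V$ are continuous, so in particular one of them is. Thus (a) implies each of (b), (c), (d), and (e).

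Next I would close the reversals via two short chains. The first chain is (b) $\Rightarrow$ (e) $\Rightarrow$ (a): any Caristi system with both $f$ and $V$ continuous is a fortiori a Caristi system with at least one of them continuous, so (e) is a special case of (b), and then Proposition \ref{prop-CaristiContinuousReversal} supplies (e) $\Rightarrow$ (a). The second chain is (c) $\Rightarrow$ (d) $\Rightarrow$ (a): since $[0,1]$ is compact and a Baire class $1$ function on $[0,1]$ qualifies as a coded Baire function in the sense of Section \ref{sec-MetricDefs} (corresponding to a tree of height one, with continuous functions labeling the leaves and a single limit node at the root), every instance of (d) is an instance of (c); then Proposition \ref{propBaire1toACA} supplies (d) $\Rightarrow$ (a). Putting the two chains together yields (b) $\Rightarrow$ (a) and (c) $\Rightarrow$ (a), completing the cycle.

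The substantive mathematics has already been done in Propositions \ref{prop-CaristiContinuousReversal} and \ref{propBaire1toACA}, so I do not anticipate any real obstacle. The only point that deserves a moment's care is the verification that a Baire class $1$ function on $[0,1]$ meets the formal definition of a coded Baire function used in Proposition \ref{propBaire1toACA}, but this is immediate from the coding conventions once one observes that the approximating sequence $\sel f_n \ser_{n \in \Nb}$ of continuous functions is itself part of the code and directly furnishes the required tree structure.
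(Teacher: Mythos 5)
Your proposal is correct and matches the paper's argument: the forward directions are instances of Proposition \ref{Prop_holds}\ref{itCarACA}, and the reversals are closed by observing that (e) specializes (b) and (d) specializes (c), then invoking Propositions \ref{prop-CaristiContinuousReversal} and \ref{propBaire1toACA}. Your remark that the Baire class $1$ coding (a single limit node over continuous leaves) fits the formal definition of a Baire function code is the right point to check, and it holds as you say.
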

\begin{proof}
  Proposition \ref{Prop_holds} shows that (a) implies both (b) and (c). Item (d) is a case of (c) and (e) is a case of (b). That  (d) implies (a) is Proposition \ref{propBaire1toACA} and  (e) implies (a) is Proposition \ref{prop-CaristiContinuousReversal}.
\end{proof}

We remark that the strength of the $\cfp$ for compact $\spc X$, continuous $f$, and {\em arbitrary} $V$ is left open, although by the above, it must fall between $\wkl$ and $\aca$.

\section{Caristi's theorem and the TLPP}\label{SecCFPvsTLPP}
In this section we will show that the unrestricted $\cfp$ is equivalent to $\mathsf{TLPP}_0$; recall that we defined the latter in Section \ref{SecSSOA}.

We first prove Caristi's theorem in ${\sf TLPP}_0$.
We derive this from a `relativized' version of Ekeland's variational principle provable in ${\sf TLPP}_0$.
The idea is that for a potential $V$, we can find a potential $V'$ which is the same as $V$ on hyperarithmetically definable points (with suitable parameters) and so that $V'$ has a critical point.
Below, we let $\spc X^Y_\prec$ be the set of all points of $\spc X$ that are $\Sigma^Y_\prec$.

\begin{Definition}
Let $\spc X$ be a complete separable metric space and $V$ be a potential on $\spc X$.
Let $Z\subseteq \Nb$.
A {\em $\Sigma_\prec^Z$ relativization of $V$} is a potential $V' = \lim_{n\to\infty}V'_n$ whose Baire code is arithmetical on some set $W$ and such that for every $x\in \spc X^{W\oplus Z}_\prec$ and $n>0$,
\begin{equation}\label{eqInf}
V'_{n}(x) = \inf_{y \in \spc X_{\prec}^{W\oplus Z} } ( V(y) + n   d(x,y) ).
\end{equation}
We call $W$ the {\em parameter} of $V'$.
\end{Definition}

Roughly speaking, $V'$ represents a ``smoothed'' version of $V$ in which discontinuities at points of high complexity are forgotten.

\begin{Lemma}[${\sf TLPP}_0$]\label{lemmRelEk}
For every potential $V$ and every set $Z$, there is a $\Sigma^{  Z}_\prec$ relativization of $V$.
\end{Lemma}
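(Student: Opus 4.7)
The plan is to use the partial impredicativity characterization of $\mathsf{TLPP}_0$ (Lemma~\ref{thm:tlpp_equivalent}\ref{itTLPPThree}) to simultaneously define a set $W$ and the values of the envelopes $V'_n$ at rational points, where the infimum defining $V'_n$ is restricted to $\Sigma^{W\oplus Z}_\prec$ points. Once these rational values are fixed, standard Lipschitz extension produces continuous $V'_n$ on all of $\spc X$, and $V' := \lim_n V'_n$ gives a potential whose Baire code is arithmetical in $W$.

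The heart of the argument is to express the condition ``$V(y) + n\,d(a,y)$ is bounded below for every $\Sigma^{W\oplus Z}_\prec$ point $y$'' by a $\Sigma^0_2$ formula, so that Lemma~\ref{thm:tlpp_equivalent}\ref{itTLPPThree} is applicable. Writing $V = \lim_m V_m$ with $V_m$ monotone continuous, I would consider the formula
\[
\phi(\la n,a,q,k\ra, Y, V, Z) \equiv \bl Y \text{ codes a point } y\br \imp \exists m\, \bl V_m(y) + n\,d(a,y) > q + 2^{-k-1}\br,
\]
which is $\Sigma^0_2$ because ``$Y$ codes a point'' is arithmetical (at worst $\Pi^0_2$) and the strict lower bound $V(y)>r$ unfolds to $\exists m\,V_m(y)>r$ using monotonicity. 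Applying Lemma~\ref{thm:tlpp_equivalent}\ref{itTLPPThree} with parameters $V\oplus Z$ yields sets $X_0,X_1$; setting $W:= X_0\oplus X_1$, one has $\la n,a,q,k\ra\in X_0$ iff every $\Sigma^{W\oplus Z}_\prec$ point $y$ satisfies $V(y) + n\,d(a,y) > q + 2^{-k-1}$, which is equivalent to $\inf_{y\in\spc X_\prec^{W\oplus Z}}(V(y) + n\,d(a,y)) \geq q + 2^{-k-1}$.

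Now define $V'_n(a) := \sup\{q\in\Qb : \exists k,\ \la n,a,q,k\ra \in X_0\}$ for each $a\in X$ and $n\geq 1$. Unwinding, $\{q : \exists k,\ \la n,a,q,k\ra\in X_0\} = \{q : q < \inf_{y\in\spc X_\prec^{W\oplus Z}}(V(y)+n\,d(a,y))\}$, so $V'_n(a)$ equals the desired infimum exactly. A direct triangle-inequality argument shows $V'_n$ is $n$-Lipschitz on $X$, so it extends uniquely to an $n$-Lipschitz continuous function on $\spc X$ via $V'_n(x) := \inf_{a\in X}(V'_n(a) + n\,d(x,a))$. Each $V'_n$ then has a continuous code arithmetical in $W$; the sequence is pointwise nondecreasing in $n$ (the coefficient of the nonnegative quantity $d(x,y)$ grows), so $V':=\lim_n V'_n$ is a potential with Baire code arithmetical in $W$.

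Finally, to verify equation~\eqref{eqInf} at every $x\in \spc X^{W\oplus Z}_\prec$: both sides are $n$-Lipschitz functions of $x$ (the right-hand side because it is an infimum of $n$-Lipschitz functions $x\mapsto V(y)+n\,d(x,y)$, the left-hand side by construction), and they agree on the dense $\Delta^0_0$ subset $X$, hence they agree on every $\Sigma^{W\oplus Z}_\prec$ point by taking limits of rational approximations. The main obstacle is the quantifier-complexity bookkeeping in the second paragraph: the naive condition ``$V(y) + n\,d(a,y) \geq q$'' is $\Pi^0_2$, not $\Sigma^0_2$, so one must replace $\geq q$ by $> q + 2^{-k-1}$ and exploit the monotone-limit representation of $V$ in order to bring $\phi$ within the $\Sigma^0_2$ complexity permitted by Lemma~\ref{thm:tlpp_equivalent}\ref{itTLPPThree}.
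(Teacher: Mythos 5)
Your proposal follows essentially the same route as the paper's proof: apply Lemma~\ref{thm:tlpp_equivalent}, item~\ref{itTLPPThree}, to a low-complexity formula asserting a strict rational lower bound on $V(y)+n\,d(a,y)$ over all $\Sigma_\prec$-relativized points $y$, take $W$ to be the join of the two resulting sets, read off $V'_n$ on the countable dense set $X$, extend by $n$-Lipschitz continuity to a code arithmetical in $W$, and verify \eqref{eqInf} by density. The only cosmetic difference is that the paper notes the condition $c<V(y)+n\,d(a,y)$ is already $\Sigma^0_1$ (using the monotone representation of $V$), so your additional $2^{-k-1}$ slack and the quantifier over $k$ are unnecessary, though harmless.
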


\begin{proof}
Fix $Z$ and $V$.
Note that for any set $W$, the function $V'_{n-1}(x) $  defined by \eqref{eqInf} is uniformly continuous (when defined), because for $x,x',y \in \spc X$, we have that
\[|(V(y) + n d(x,y))- (V(y) + n d(x',y))|\leq n d(x,x'),\]
hence $|V'_n(x) - V'(x')| \leq  n d(x,x') $.
It is moreover evident that $V'_n(x)\leq V'_m(x)$ whenever $n\leq m$, given that $nd(x,y)\leq m d(x,y)$ for all $x,y$.
It remains to show that $\lim_{n\to\infty} V'_n $ has a Baire code as a sequence of continuous functions.

Let $\mathcal{X}$ be a complete separable metric space, let $V \colon \spc X \to [0, \infty)$ be a potential, and let $Z$ be any set.
The property $c < V(y) +n d(a,y) $ is $\Sigma^0_1$, hence using $\mathsf{TLPP}_0$ in the form of Lemma~\ref{thm:tlpp_equivalent} item~\ref{itTLPPThree}, we obtain sets $\Delta$ and $Y$ so that for $x\in X$, $(a, c,n) \in \Delta$ if and only if for all  $y \in  \spc X_{\prec}^{\Delta \oplus Y \oplus Z} $, $c< V(y) + n d(a,y)$.
We set $W:= \Delta\oplus Y$ and write $\spc X' $ for $\spc X_{\prec}^{W \oplus Z} $.

From $\Delta$, we may arithmetically define a code $\Upsilon_n$ for $V'_n$.
Let us represent open balls in $\Rb$ as intervals $(c,d)$.
Then, enumerate $\ball ar $ into $ \Upsilon_n((c,d)) $ if there are $c',d'$ such that $c<c'<d'<d$ and for all $b\in \ball ar\cap X$, $V'_n(b) \in (c',d')$.
We check that $\Upsilon_n$ thus defined satisfies Definition~\ref{defBorel} and codes the desired function $V'_n$.

For Item~\ref{itBorelOne}, it is immediate that if $(c_0,d_0)\subseteq (c_1,d_1)$ and $\ball ar $ is in $ \Upsilon_n((c_0,d_0)) $ via $(c',d')$, then the same sub-interval witnesses that $\ball ar $ is in $ \Upsilon_n((c_1,d_1)) $.
For Item~\ref{itBorelTwo}, if $ \Upsilon_n((c_0,d_0)) \cap   \Upsilon_n((c_1,d_1)) \neq \varnothing $, since these sets are open and $X$ is dense, we may find $a\in \Upsilon_n((c_0,d_0)) \cap   \Upsilon_n((c_1,d_1)) \cap X $, which implies that $V'_n(a) \in (c_0,d_0) \cap   (c_1,d_1)$, so the two intervals must intersect.

It remains to check that $\Upsilon_n$ indeed codes the desired function $V'_n$ of \eqref{eqInf}.
Let $x\in\spc X$ and write $x=\lim_{i\to\infty} x_i$, with $x_i\in X$.
By uniform continuity, the value $V'_n(x)$ as given by \eqref{eqInf} is $\lim_{i\to\infty} V'_n(x_i)$, which exists in $\aca$ as a real number.
Now, suppose that $ (c,d) $ is an interval containing $V'_n(x)$, and let $c',d'$ be such that $c<c'<V'_n(x)<d'<d$.
Since $V'_n$ is continuous, for small enough $\delta$, we see that $V'_n[\ball x\delta] \subseteq (c',d') $.
Let $a\in X$ be such that $d(a,x)<\nicefrac\delta 2$, so that $x\in \ball a{\nicefrac \delta 2} \subseteq \ball x\delta$.
Then $V'_n[\ball a{\nicefrac\delta 2}] \subseteq (c',d') $, so $V'_n[\ball a{\nicefrac\delta 2}]$ is enumerated in $\Upsilon_n ((c,d))$ and $x\in \Upsilon_n ((c,d))$.
Since $c,d$ were arbitrary, we see that $x$ is in the domain of the coded function by $\Upsilon_n$ and $V'_n(x)$ is indeed the value assigned to $x$.
\end{proof}

When looking at points of not-too-high complexity, we want $V'$ to look essentially the same as $V$.
The next lemma makes this precise.

\begin{Lemma}\label{lemmPrimeSame}
Let $V$ be a potential on $\spc X$, and let $V' = \lim_{n\to\infty} V'_n$ be a $\Sigma^Z_\prec$ relativization of $V$.
Then, $V(x)=V'(x)$ for every $x\in \spc X'$.
\end{Lemma}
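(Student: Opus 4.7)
The plan is to prove the equality by the two standard one-sided estimates, using only the representation $V=\lim_{m\to\infty} V_m$ with $V_m$ continuous and pointwise non-decreasing (so that $V$ is in particular lower semi-continuous).

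For the inequality $V'(x)\leq V(x)$: since $x\in \spc X^{W\oplus Z}_\prec=\spc X'$, the point $x$ itself is admissible in the infimum defining $V'_n(x)$, so
\[V'_n(x)\leq V(x)+n\, d(x,x)=V(x)\]
for every $n\geq 1$. Letting $n\to\infty$ gives $V'(x)\leq V(x)$.

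For the converse inequality $V(x)\leq V'(x)$, I would fix $\varepsilon>0$ and use the increasing-continuous representation to extract a modulus of lower semi-continuity at $x$. Choose $m$ large enough that $V_m(x)>V(x)-\varepsilon/2$, and then, by continuity of $V_m$, choose $\delta>0$ so that $|V_m(y)-V_m(x)|<\varepsilon/2$ whenever $d(x,y)<\delta$. Because $V\geq V_m$ pointwise, this yields
\[d(x,y)<\delta\ \Imp\ V(y)>V(x)-\varepsilon.\]
Now split the infimum in the definition of $V'_n(x)$ according to whether $y$ lies in $\ball x\delta$ or not. On $\ball x\delta\cap \spc X'$, $V(y)+n\,d(x,y)>V(x)-\varepsilon$; outside $\ball x\delta$, $V(y)+n\,d(x,y)\geq n\delta$ since $V\geq 0$. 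Picking $n$ so large that $n\delta\geq V(x)$, both cases give $V(y)+n\,d(x,y)\geq V(x)-\varepsilon$, and hence $V'_n(x)\geq V(x)-\varepsilon$ for all sufficiently large $n$. Letting $n\to\infty$ and then $\varepsilon\to 0$ yields $V(x)\leq V'(x)$.

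I do not expect a real obstacle here: this is essentially the same computation that shows the envelopes $V_{(n)}$ from the sketch of Lemma~\ref{lemmLSCrepresentInformal} converge pointwise to $V$, except that the infimum in the definition of $V'_n$ is taken only over $\spc X'$ rather than all of $\spc X$. Restricting the infimum cannot decrease it, so the lower bound is unaffected; and it does not increase it at points $x\in\spc X'$, since then $y=x$ is still allowed, giving the upper bound. The only verification worth flagging is that the modulus $\delta=\delta(x,\varepsilon)$ is obtained arithmetically from a continuous $V_m$ with rational value $V_m(x)$ approximated within $\varepsilon/2$, which is unproblematic in the ambient base system.
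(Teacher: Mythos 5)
Your proof is correct and follows essentially the same route as the paper's: instantiating $y=x$ for $V'(x)\leq V(x)$, and for the converse choosing a lower semi-continuity modulus $\delta$ at $x$ and then $n$ with $n\delta\geq V(x)$ to split the infimum into near and far points. The only cosmetic difference is that you derive the modulus explicitly from the increasing continuous approximations $V_m$, where the paper simply invokes lower semi-continuity of $V$.
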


\begin{proof}
Let $x\in \spc X'$.
From the definition of $V'_n$, we see that we may instantiate $y$ as $x$ and obtain $V'_n(x) \leq V(x) + n d(x,x) = V(x)$, so $V'(x)\leq V(x)$.

To show that $V(x) \leq V'(x)$, we show that $V(x) \leq V'(x) + \varepsilon$ for all rational $\varepsilon > 0$.
It suffices to show that for all $\varepsilon > 0$ there is an $n$ such that $V(x) \leq V_n'(x) + \varepsilon$.

Since $V$ is lsc, let $\delta$ be so that $d(x,y) <\delta$ implies $V(y) \geq V(x) - \varepsilon$.
Choose $n$ so that $n\delta  > V(x)  $.
Then, $V(x) \leq V(y) + nd(x,y) + \varepsilon$ for every $y \in \mathcal{X}'$: if $d(x,y)<\delta $ this is because $V(x) \leq V(y) +   \varepsilon$ by our choice of $\delta$, otherwise $V(x) \leq n\delta \leq nd(x,y) $ by our choice of $n$.
We conclude that $V(x) \leq V_n'(y) + \varepsilon$, as needed.
\end{proof}

As mentioned, $V'$ is meant to be a version of $V$ where discontinuities of high complexity are removed.
The following makes this precise.

\begin{Lemma}\label{lemmEpsilonSquare}
Let $V\colon\spc X\to[0,\infty)$ be any potential and $V'$ a $\Sigma^Z_{\prec}$-relativization of $V$.
If $x\in \spc X$ and $\varepsilon>0$, then there is $y\in \spc X'$ such that $d(x,y)<\varepsilon$ and $V'(y) < V'(x) + \varepsilon$.
\end{Lemma}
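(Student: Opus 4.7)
My plan is to exploit the infimum formula defining $V'_n$ on $\spc X'$: for large $n$ the penalty $n\,d(x,y)$ forces any near-minimizer $y$ of the infimum to lie close to $x$, while the pointwise bound $V'_n(x)\leq V'(x)$ simultaneously controls $V(y)$ from above. Combining these two observations should yield the required estimates on $d(x,y)$ and on $V'(y)=V(y)$ at a single common point $y\in\spc X'$.

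Before carrying this out I would address one preparatory issue: the identity $V'_{n}(x)=\inf_{z\in\spc X'}(V(z)+n\,d(x,z))$ is stipulated only for $x\in\spc X'$, whereas the lemma allows arbitrary $x\in\spc X$. The first step is therefore to extend the identity to all of $\spc X$. The right-hand side is $n$-Lipschitz in $x$ by the triangle inequality, and the left-hand side is continuous in $x$ because each $V'_n$ is one of the continuous approximants in the Baire code of the potential $V'$. Since every $a\in X$ is trivially $\Sigma^{W\oplus Z}_\prec$, the dense subset $X$ lies inside $\spc X'$, so the two continuous functions already agree on a dense subset of $\spc X$ and must therefore agree on all of $\spc X$.

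With the formula available globally, given $x\in\spc X$ and $\varepsilon>0$ I would pick $n\in\Nb$ with $n\,\varepsilon>V'(x)+\varepsilon/2$ and use the infimum property to select $y\in\spc X'$ with $V(y)+n\,d(x,y)<V'_n(x)+\varepsilon/2\leq V'(x)+\varepsilon/2$. Since the two summands on the left are nonnegative, this separately yields $V(y)<V'(x)+\varepsilon$ and $d(x,y)<(V'(x)+\varepsilon/2)/n<\varepsilon$, and Lemma~\ref{lemmPrimeSame} upgrades the former to $V'(y)=V(y)<V'(x)+\varepsilon$. The main obstacle I anticipate is precisely the extension step: one must verify carefully, in the ambient subsystem, that the continuity of each $V'_n$ built into the Baire code, combined with $X\subseteq\spc X'$, really does suffice to transfer the infimum identity from $\spc X'$ to the whole of $\spc X$, so that the selection of $y$ near a general $x\in\spc X$ is justified.
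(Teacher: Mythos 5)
Your proof is correct and is essentially the paper's argument in contrapositive-free form: both exploit that for $n$ large relative to $\nicefrac{V'(x)}{\varepsilon}$ the penalty term $n\,d(x,y)$ in \eqref{eqInf}, together with $V'_n(x)\leq V'(x)$ and Lemma~\ref{lemmPrimeSame}, forces a near-minimizer $y\in\spc X'$ of the infimum to satisfy both required inequalities. Your preliminary step extending \eqref{eqInf} from $\spc X'$ to all of $\spc X$ (via continuity of $V'_n$, the $n$-Lipschitz behaviour of the infimum in $x$, and density of $X\subseteq\spc X'$) makes explicit a point the paper uses silently, since its own proof likewise invokes \eqref{eqInf} at an arbitrary $x\in\spc X$.
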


\begin{proof}
Assume toward a contradiction that $x\in \spc X$ and $\varepsilon>0$ are such that $V'(y) \geq V'(x) + \varepsilon$ whenever $y\in \spc X'$ is such that $d(x,y)<\varepsilon$.

Choose $n > \nicefrac{V'(x)}\varepsilon +1 $ and let $y\in \spc X'$ be arbitrary. 
If $d(x,y)<\varepsilon$,
\[V'(y) + n d(x,y) \geq V'(y) \geq V'(x) +\varepsilon ,\]
while if $d(x,y)\geq \varepsilon$, we see by our choice of $n$ that
\[   V'(y)+n d(x,y) \geq   n \varepsilon >(\nicefrac{V'(x)}\varepsilon +1)\varepsilon =  V'(x) +\varepsilon.\]
But $V(y) = V'(y)$ by Lemma~\ref{lemmPrimeSame}, so we conclude that $V(y)+n d(x,y) \geq V'(x) + \varepsilon$ for all $y\in\spc X'$.
We obtain $V'_n(x)>V'(x)+\varepsilon$ by \eqref{eqInf}, contradicting $V'_n(x)\leq V'(x)$.
\end{proof}

With this, we can show that $V'$ is indeed enveloped.

\begin{Lemma}[$\rca$]\label{lemmRelHon}
Let $V$ be a potential on $\spc X$, and let $V' = \lim_{n\to\infty} V'_n$ be a $\Sigma^Z_\prec$ relativization of $V$.
Then, $V'_n(x)=V'_{(n)}(x)$ for all $n$.
\end{Lemma}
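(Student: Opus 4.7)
The plan is to establish $V'_n(x) = V'_{(n)}(x)$ first for $x\in \spc X' := \spc X^{W\oplus Z}_\prec$, and then extend to all of $\spc X$ by uniform continuity, using that both sides are $n$-Lipschitz. Density of $\spc X'$ in $\spc X$ is clear because the basic points $X$ of $\spc X$ are $\Sigma_\prec^{W\oplus Z}$ (they are computable from any parameter) and hence lie in $\spc X'$.

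For $x\in\spc X'$, the inequality $V'_n(x) \geq V'_{(n)}(x)$ is immediate from \eqref{eqInf} together with Lemma~\ref{lemmPrimeSame}: these yield $V'_n(x) = \inf_{y\in\spc X'}(V'(y) + nd(x,y))$, which, as an infimum over a subset, dominates $V'_{(n)}(x) = \inf_{y\in\spc X}(V'(y) + nd(x,y))$.

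For the reverse inequality, I would fix $z\in\spc X$ and a rational $\varepsilon>0$, choose $\delta := \varepsilon/(n+1)$, and apply Lemma~\ref{lemmEpsilonSquare} at $z$ to produce $y\in\spc X'$ with $d(z,y)<\delta$ and $V'(y)<V'(z)+\delta$. Then, by \eqref{eqInf} and Lemma~\ref{lemmPrimeSame},
\begin{equation*}
V'_n(x) \leq V(y) + nd(x,y) = V'(y) + nd(x,y) < V'(z) + nd(x,z) + (n+1)\delta = V'(z) + nd(x,z) + \varepsilon.
\end{equation*}
Infimizing over $z\in\spc X$ and sending $\varepsilon\to 0$ gives $V'_n(x) \leq V'_{(n)}(x)$, completing the equality on $\spc X'$.

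To extend to all of $\spc X$, note that $V'_n$ is $n$-Lipschitz (as already observed in the proof of Lemma~\ref{lemmRelEk}) and $V'_{(n)}$ is $n$-Lipschitz by the triangle inequality applied inside the infimum: for any $y$, $V'(y) + nd(x,y) \leq V'(y) + nd(x',y) + nd(x,x')$. Two $n$-Lipschitz real-valued functions that agree on a dense subset of $\spc X$ agree everywhere. The only mildly delicate ingredient is the balancing of $\delta$ and $\varepsilon$ in the application of Lemma~\ref{lemmEpsilonSquare}; its role is precisely to replace an infimum over $\spc X$ by one over $\spc X'$, where the definition \eqref{eqInf} of $V'_n$ can be invoked directly.
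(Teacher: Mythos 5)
Your proof is correct and follows essentially the same route as the paper's: both inequalities are obtained exactly as in our argument, using \eqref{eqInf} together with Lemma~\ref{lemmPrimeSame} for $V'_n(x)\geq V'_{(n)}(x)$, and Lemma~\ref{lemmEpsilonSquare} with the same $\varepsilon/(n+1)$-style balancing for the reverse. Your extra step of first working on $\spc X'$ and then extending by density and the $n$-Lipschitz bounds is a welcome refinement, since \eqref{eqInf} is only guaranteed on $\spc X'$ while the paper's proof applies it to arbitrary $x\in\spc X$ without comment.
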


\begin{proof}
Fix $x\in \spc X$.
First we show for $y\in \spc X$ arbitrary that $V_n '(x) \leq   V'(y) + nd(x,y) $.
Let $\varepsilon>0$.
Using Lemma \ref{lemmEpsilonSquare}, let $y'\in \spc X'$ be such that $d(y,y')<\nicefrac\varepsilon{2(n+1)}$ and $V ' (y')  < V '(y)+ \nicefrac \varepsilon 2$.
Then,
\begin{align*}
V'_n (x) & \leq   V (y') +nd(x,y') & \text{by definition of $V'_n$,}\\
& = V'(y') +nd(x,y') & \text{since $y'\in\spc X'$,}\\
& < V'(y) + n d(x,y) + \varepsilon & \text{by our choice of $y'$.} 
\end{align*}
Since $\varepsilon$ was arbitrary, $V'_n(x)  \leq V'(y) + nd(x,y) $.

Finally we check that if $\varepsilon>0$, then there is $y \in \spc X$ such that $V'_n(x) +\varepsilon>V'(y) +n d(x,y) $.
By definition, $ V'_n(x) = \inf_{y\in \spc X'} (V(y) + nd(x,y)) $, so we can choose $y \in \spc X'$ such that $V'_n(x) +  \varepsilon  > V(y) +n d(x,y)$.
Since $y\in \spc X'$, $V'(y) = V(y)$, so that
\[ V'_n(x) + \varepsilon  >  V'(y) + nd(x,y) ,\]
as required.
\end{proof}

Since $\aca$ proves that all enveloped potentials have a critical point by Theorem \ref{thm-CritPtsExist} and Lemma~\ref{lemmEnvelopeCrit}, we obtain the following.

\begin{Corollary}[$\aca$]\label{corRelCrit}
Every $\Sigma^Z_\prec$ relativization with parameter $W$ has a critical point that is arithmetical in $W \oplus Z$ (hence $\Sigma_\prec^{W \oplus Z}$ if $\field\prec$ is infinite).
\end{Corollary}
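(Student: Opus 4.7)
The plan is to reduce to the continuous case of Ekeland's variational principle available in $\aca$, namely Theorem~\ref{thm-CritPtsExist}\ref{CritPtsExistTwo}, and then lift the resulting critical point back to $V'$ via Lemma~\ref{lemmEnvelopeCrit}. Most of the setup is already in place: Lemma~\ref{lemmRelHon} identifies the approximants $V'_n$ from the Baire code of $V'$ with the envelopes $V'_{(n)}$, and each $V'_n$ is a continuous potential on $\spc X$ whose code is arithmetical in $W$.

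First I would fix any integer $\alpha > 1$, say $\alpha = 2$, and consider the continuous potential $V'_2 = V'_{(2)}$. Since $\aca$ proves Ekeland's variational principle for continuous potentials on complete separable metric spaces, I obtain a critical point $x_*$ of $V'_2$. By Lemma~\ref{lemmEnvelopeCrit} applied with $\alpha = 2$, this $x_*$ is automatically a critical point of $V'$ as well, which gives the existence clause of the corollary. To control its complexity, I would inspect the $\aca$-proof of Ekeland's principle for continuous potentials in~\cite{EkelandSelecta}: it produces $x_*$ as a limit of a sequence defined by arithmetical operations on the code of the input potential, so $x_*$ is arithmetical in the Baire code of $V'_2$, which in turn is arithmetical in $W$. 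Hence $x_*$ is arithmetical in $W \oplus Z$.

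For the parenthetical ``hence $\Sigma^{W\oplus Z}_\prec$ if $\field\prec$ is infinite'', I would appeal directly to Definition~\ref{defSigmaPrec}: once $\field\prec$ contains an initial segment of order type $\omega$, the jump hierarchy $H(\prec^-,Y,W\oplus Z)$ in the successor case, or $H(\prec,Y,W\oplus Z)$ in the limit case, already iterates the Turing jump over $W\oplus Z$ through every finite level, so every set arithmetical in $W\oplus Z$ is $\Sigma^{W\oplus Z}_\prec$. The only genuine obstacle is the uniformity claim in the previous paragraph, namely that the critical point delivered by the continuous Ekeland principle in $\aca$ is arithmetical in the code of the potential; this is implicit in~\cite{EkelandSelecta} but deserves an explicit pointer to the construction there rather than a black-box appeal to the statement of Theorem~\ref{thm-CritPtsExist}\ref{CritPtsExistTwo}.
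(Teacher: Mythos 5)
Your proposal is correct and matches the paper's own derivation, which likewise obtains the corollary by combining Lemma~\ref{lemmRelHon} (the $V'_n$ are the envelopes $V'_{(n)}$), the continuous case of Ekeland's principle in $\aca$ from Theorem~\ref{thm-CritPtsExist}, and Lemma~\ref{lemmEnvelopeCrit} to transfer the critical point back to $V'$. The paper leaves the arithmetical-definability of the resulting critical point implicit exactly as you note, so your explicit caveat about inspecting the construction in~\cite{EkelandSelecta} is a fair observation rather than a divergence.
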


With this, we may prove Caristi's theorem for Baire or Borel functions.

\begin{Proposition}[${\sf TLPP}_0$]\label{propTLPPtoCar}
Caristi's fixed point theorem holds for arbitrary lower semi-continuous potentials $V$ and arbitrary Baire or Borel functions $f$.
\end{Proposition}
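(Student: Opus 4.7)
The plan is to combine the relativized Ekeland variational principle from Lemmas \ref{lemmRelEk}--\ref{lemmRelHon} and Corollary \ref{corRelCrit} with the complexity bound on Baire/Borel functions from Lemma \ref{thm:borel_bounds_complexity}. The key idea is to choose the parameter $Z$ and the well-order $\prec$ so that the subspace $\spc X' := \spc X^{W \oplus Z}_\prec$ of points of controlled complexity is closed under $f$, so that the critical point of the relativization $V'$ produced by Corollary \ref{corRelCrit} itself lies in $\spc X'$. Lemma \ref{lemmPrimeSame} can then be used to rewrite the Caristi inequality in terms of $V'$, and the critical point property of $x_\ast$ for $V'$ produces a genuine fixed point of $f$.

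Concretely, given a Caristi system $(\spc X,f,V)$, I let $\Upsilon$ be a Borel or Baire code for $f$ and set $Z := \Upsilon$. I fix a well-order $\prec$ which is infinite and properly extends $\|f\|$, for instance $\prec = \|f\| + \omega + 1$. Working in $\mathsf{TLPP}_0$, Lemma \ref{lemmRelEk} produces a $\Sigma^Z_\prec$-relativization $V' = \lim_n V_n'$ with some parameter $W$. By Lemma \ref{lemmRelHon}, $V_n' = V'_{(n)}$ for every $n$, so $V'$ is enveloped. Since $\mathsf{TLPP}_0$ extends $\aca$, Corollary \ref{corRelCrit} yields a critical point $x_\ast$ of $V'$ which is arithmetical in $W \oplus Z$ and therefore belongs to $\spc X'$.

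By Lemma \ref{thm:borel_bounds_complexity}, $f(x_\ast)$ is $\Sigma^{x_\ast \oplus \Upsilon}_{\|f\|}$; since $x_\ast$ is arithmetical in $W \oplus \Upsilon$ and $\|f\|$ is a proper initial segment of $\prec$, a straightforward monotonicity argument in the $\Sigma^\cdot_\prec$-hierarchy shows that $f(x_\ast) \in \spc X'$ as well. Lemma \ref{lemmPrimeSame} then gives $V(x_\ast) = V'(x_\ast)$ and $V(f(x_\ast)) = V'(f(x_\ast))$, so that the Caristi inequality becomes
\[ d(x_\ast, f(x_\ast)) \leq V(x_\ast) - V(f(x_\ast)) = V'(x_\ast) - V'(f(x_\ast)). \]
Applying the critical point property of $x_\ast$ for $V'$ with $x = f(x_\ast)$ then forces $f(x_\ast) = x_\ast$, as desired.

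The main obstacle I anticipate is the bookkeeping around complexity levels needed to ensure $f(x_\ast) \in \spc X'$: one must verify that composing the $\|f\|$-level jump with an input arithmetical in $W \oplus \Upsilon$ stays inside $\Sigma^{W \oplus \Upsilon}_\prec$. This is essentially immediate from Definition \ref{defSigmaPrec} together with the choice of $\prec$, but it is the place where the argument is most sensitive to the precise coding conventions from Section \ref{SecSSOA}; everything else is a direct bootstrapping of the $\aca$-level proof for continuous $f$ through the ``smoothing'' of $V$ to $V'$.
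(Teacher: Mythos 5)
Your proposal is correct and follows essentially the same route as the paper: reduce to Borel $f$, apply Lemma \ref{lemmRelEk} to get a $\Sigma^\Upsilon_\prec$ relativization $V'$ with parameter $W$, extract a critical point $x_*$ arithmetical in $W\oplus\Upsilon$ via Corollary \ref{corRelCrit}, use Lemma \ref{thm:borel_bounds_complexity} to place $f(x_*)$ in $\spc X'$, and conclude via Lemma \ref{lemmPrimeSame} and the critical point property. The only cosmetic difference is that you pad the well-order to $\|f\|+\omega+1$ to absorb the complexity bookkeeping, whereas the paper simply takes $\prec$ to be the (infinite) complexity class of $f$ itself; both handle the same issue.
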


\begin{proof}
Since ${\sf TLPP}_0$ extends $\atr$, we may appeal to Lemma \ref{lemBaireisBorel} to see that every Baire function is Borel, so we may assume that $f$ is Borel.

Let $\mathcal{X}$ be a complete separable metric space, let $\Phi\subseteq\mathbb{N}\times X\times \mathbb{Q}^{>0}\times\mathbb{Q}$ code a lower semi-continuous potential on $\mathcal{X}$, and let $f$ be a class-${\rm ot}(\prec)$ Borel function coded by $\Upsilon$; it is convenient to assume that ${\rm ot}(\prec)$ is infinite.
By Lemma \ref{lemmRelEk}, $V$ has a $\Sigma_\prec^{ \Upsilon}$ relativization $V'$ with parameter $W$, and by Corollary~\ref{corRelCrit}, $V'$ has a critical point $x_*$ that is arithmetical in ${  W \oplus \Upsilon}$.
Since ${\sf TLPP}_0$ extends $\atr$, we may use Lemma \ref{thm:borel_bounds_complexity} to see that $f(x_*)$ is $\Sigma_\prec^{   W \oplus \Upsilon}$.
We have $d(x_*,f(x_*))\leq V(x_*)-V(f(x_*))$, and since $V(f(x_*))=V'(f(x_*))$, $V(x_*)=V'(x_*)$, and $x_*$ is a critical point of $V'$, this implies that $x_*=f(x_*)$, as required.
\end{proof}

Our next goal is to prove that the Caristi fixed point theorem for Baire/Borel functions is equivalent to ${\sf TLPP}_0 $ over $\rca$.
As a first step, we show that $\atr$ is already provable at a rather low stage in the Baire hierarchy.

\begin{Theorem}\label{thm:caristi_for_baire_1}
Over $\rca$, Caristi's theorem for Baire class-$1$ $f$ implies $\atr$.
\end{Theorem}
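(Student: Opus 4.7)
The plan is to prove the contrapositive: if $\atr$ fails, construct a Caristi system $(\spc X, f, V)$ with $f$ a Baire class~$1$ function that has no fixed point, contradicting the assumed form of Caristi's theorem. Since $\atr$ is equivalent over $\rca$ to the statement that for every well-order $\prec$ and every set $Z$ there is a $Y$ with $H(\prec, Y, Z)$, failure of $\atr$ furnishes a specific $\prec$ and $Z$ for which no such hierarchy $Y$ exists as a set.

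From this $\prec$ and $Z$, I would follow the template of Proposition~\ref{propBaire1toACA} but with a richer construction reflecting the transfinite nature of the hierarchy. A natural choice is to take $\spc X$ to be a compact metric space whose points code ``candidate hierarchies'' along $\prec$: sequences $x = \sel x_\alpha \ser_{\alpha \in \field\prec}$ (indexed via a fixed $\Nb$-enumeration of $\field\prec$), each $x_\alpha$ coding a subset of $\Nb$, with the metric weighted so that changes to coordinates appearing late in the enumeration move $x$ by proportionally small amounts. The function $f$ should take $x$ to an improved sequence in which the $\prec$-least erroneous stage is corrected to the value prescribed by $\theta_*$ from the preceding coordinates and $Z$. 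To code $f$ as Baire class~$1$, I would define continuous approximations $f_n$ which inspect only the first $n$ indices of the enumeration and correct at most the earliest error visible there, with the correction rule arranged so that once a coordinate is set by $f_n$ it is never perturbed by any $f_m$ with $m \geq n$; the pointwise limit $f := \lim_n f_n$ then exists on all of $\spc X$. The potential $V$ is built as a lower semi-continuous measure of the total weighted error of $x$, chosen so that a single correction costs less in metric distance than it saves in potential, securing the Caristi inequality $d(x, f(x)) \leq V(x) - V(f(x))$.

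With this setup, any fixed point of $f$ would be a sequence $x$ that is error-free at every stage of $\prec$---which is precisely a $Y$ with $H(\prec, Y, Z)$. Since no such $Y$ exists by hypothesis, $f$ has no fixed point, and we obtain the desired contradiction with Caristi's theorem for Baire class~$1$ $f$.

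The most delicate step will be ensuring that $f$ is genuinely coded as a Baire class~$1$ function, i.e.~that $\lim_n f_n(x)$ exists for \emph{every} $x \in \spc X$, including ``junk'' inputs which do not resemble coherent approximations to a hierarchy. Arranging this requires the correction rule to behave monotonically on each coordinate as the scan stage $n$ grows, which in turn demands a careful interleaving of the $\Nb$-enumeration of $\field\prec$ with the metric weights, together with a uniform definition of ``first error'' that remains well-behaved on all of $\spc X$. A secondary challenge is presenting $V$ as a pointwise-monotone limit of continuous envelopes, as required by the coding of potentials in Section~\ref{sec-MetricDefs}, while ensuring the weighted-error measure still decreases strictly under each correction step on arbitrary inputs rather than only on coherent ones.
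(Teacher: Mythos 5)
Your high-level picture---points of $\spc X$ as candidate hierarchies, $f$ correcting the $\prec$-least error, $V$ a weighted error count, fixed point $=$ hierarchy---is the right one, but two of your concrete choices break the argument. First, the space cannot be compact. Item~\ref{itCarACA} of Proposition~\ref{Prop_holds} shows that $\aca$ proves the $\cfp$ for compact $\spc X$ with \emph{arbitrary} Baire or Borel $f$ and arbitrary potential $V$ (any critical point of $V$, which $\aca$ supplies on compact spaces, is automatically a fixed point of $f$). Since $\aca$ does not imply $\atr$ (the $\omega$-model of arithmetically definable sets satisfies $\aca$ but not $\atr$), no construction carried out in $\rca$ can turn a failure of $\atr$ into a fixed-point-free Caristi system with Baire class~$1$ $f$ on a compact space: in a model of $\aca+\neg\atr$ your system would have to fail to be a Caristi system, fail to have Baire class~$1$ $f$, or have a fixed point after all. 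The paper's construction lives on the non-compact space $\Nb^\Nb$.

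Second, and independently, your error condition is of the wrong topological complexity. ``Coordinate $\alpha$ of $x$ disagrees with the set prescribed by $\theta_*$ from the $\prec$-earlier coordinates'' has a genuinely closed part (some $n\in x_\alpha$ is \emph{never} enumerated by $\theta_*$), so the set of $x$ with an error at $\alpha$ is not open; consequently your weighted-error $V$ is not lower semi-continuous, and the ``first visible error'' seen by your $f_n$ need not stabilize as $n\to\infty$ (an apparent error at a $\prec$-early position can surface arbitrarily late, so your requirement that a coordinate once set is never perturbed cannot be arranged). The paper's proof resolves exactly this point: working over $\aca$ (justified by Proposition~\ref{propBaire1toACA}), it first builds by $\Delta^0_1$-transfinite recursion a sequence of trees $T_\alpha$ whose paths carry both a path through $\bigoplus_{\beta\prec\alpha}T_\beta$ and the associated jump data, so that ``error at $\alpha$'' becomes ``$(x)_\alpha$ falls off $T_\alpha$''---an \emph{open} condition. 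Then $V(x)=\sum\{2^{-\alpha}:(x)_\alpha\notin[T_\alpha]\}$ is a genuine potential, the $\prec$-least erroneous index is detected at a finite stage by the approximations $F_s$, and the corrected coordinate converges to an actual path computed (in the limit) from the correct earlier coordinates. With that encoding the paper argues directly rather than by contrapositive---the system is a Caristi system outright and its fixed point computes $Y$ with $H(\prec,Y)$---but that framing difference is cosmetic next to the two gaps above.
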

\begin{proof}
Since Caristi's theorem for continuous functions implies $\aca$, we can work over $\aca$.
Recall that the formula $H$ describes the transfinite recursion for $\theta_*(n,Y)$ which defines a universal computably enumerable set relative to $Y$. Here, we may safely use a parameter $Y\in\Nb^{\Nb}$ instead of a member of $2^{\Nb}$.
By the normal form theorem,
we write $\theta_{*}(n,Y)\equiv\exists m\,\theta_{0}(m,n,Y\upharpoonright m)$.

In what follows, we will emulate the transfinite recursion on $\Nb^{\Nb}$.
Let $\prec$ be a well-order on $\Nb$.
We will construct a sequence of trees $\sel T_{\alpha}: \alpha\in\Nb\ser$ by $\Delta^{0}_{1}$-transfinite recursion which is implied from $\aca$ (see~\cites{Freund2020,DFSW}).
For a given $\alpha\in\Nb$ and $\sel T_{\beta}\subseteq \Nb^{<\Nb}: \beta\prec \alpha \ser$, we let $T^{\alpha}=\bigoplus_{\beta\prec\alpha}T_{\beta}:=\{\sigma\in \Nb^{<\Nb}:$ for any $\beta <|\sigma|$,  $\sigma_{\beta}\in T_{\beta}$ if $\beta \prec \alpha$ and $\sigma_{\beta}$ is a sequence of $0$'s otherwise$\}$,
where $\sigma_{i}\in \Nb^{<\Nb}$ is $\sigma_{i}(x)=\sigma(\sel x,i\ser )$ if $\sel x,i\ser<|\sigma|$. (Here, the domain $\Nb$ may be identified with $\Nb\times\Nb$ by the pairing function.
Without loss of generality, we may assume that $\sel x,i\ser \geq i$ in the sense of $\Nb$.)

Next we construct $T_{\alpha}$ from $T^{\alpha }$.
Here, the range $\Nb$ may be identified with $\{0,1\} \times \Nb^{<\Nb}$, and for a given $n=\sel s,\tau\ser\in\Nb$, we write $s=(n)_{\circ}$ and $\tau=(n)^{\circ}$. (The idea here is that the second coordinate encodes a path $g\in[T^{\alpha}]$ and the first coordinate encodes the Turing jump of $g$.)
Then we define $T_{\alpha}$ as follows: $\sigma\in T_{\alpha}$ if
\begin{itemize}
 \item for any $s<|\sigma|$, $s\le |(\sigma(s))^{\circ}|$ and $(\sigma(s))^{\circ}\in T^{\alpha }$,
 \item for any $t<s<|\sigma|$, $(\sigma(t))^{\circ}\secsub(\sigma(s))^{\circ}$,
 \item for any $s<|\sigma|$, $(\sigma(s))_{\circ}=0\to (\forall m\le |(\sigma(|\sigma|-1))^{\circ}|)\neg\theta_{0}(m,s,(\sigma(|\sigma|-1))^{\circ}\upharpoonright m)$, and
 \item for any $s<|\sigma|$, $(\sigma(s))_{\circ}=1\to (\exists m\le |(\sigma(s))^{\circ}|)\theta_{0}(m,s,(\sigma(s))^{\circ}\upharpoonright m)$.
\end{itemize}

Now, let $\sel T_{\alpha }\subseteq \Nb^{<\Nb}: \alpha\in \Nb\ser$ be the result of the transfinite recursion.
If $g\in[T_{\alpha}]$ for some $\alpha \in\Nb$, 
we let $(g)^{\circ}:=\bigcup\{(g(s))^{\circ}:s\in\Nb\}$ and $(g)_{\circ}:=\{g: (g(s))_{\circ}=1\}$, then $(g)^{\circ}\in T^{\alpha}$ and $(g)_{\circ}=\{n:\theta_{*}(n,(g)^{\circ})\}$.
Moreover, let $f_{\beta}:=((g)^{\circ})_{\beta}=\bigcup\{((g(s))^{\circ})_{\beta}:s\in\Nb\}$ for $\beta\prec \alpha$, and $f_{\alpha}=g$.
then, by the definition, we have $f_{\beta}\in[T_{\beta}]$ for any $\beta\preccurlyeq \alpha$.
Now, by arithmetical transfinite induction up to $\alpha$, we may verify that $f_{\beta}$ computes a set $Y^{\beta}$ such that $H_{\theta}(\prec_{\beta},Y^{\beta})$ for any $\beta\preccurlyeq \alpha$, where $\prec_{\gamma}$ is a restriction of $\prec$ to the domain $\{\gamma'\in\Nb: \gamma'\prec\gamma\}$.
Hence, if $\hat f\in [\bigoplus_{\alpha\in\Nb}T_{\alpha}]$, then $(\hat f)_{\alpha}\in [T_{\alpha}]$ for any $\alpha\in\Nb$, and thus $\hat f$ computes a set $Y$ such that $H(\prec,Y)$.

Finally, we construct a Caristi system whose fixed point is a path of  $\bigoplus_{\alpha\in\Nb}T_{\alpha}$.
We define a potential function $V:\mathbb{N}^{\mathbb{N}}\rightarrow[0,\infty)$ by $V(f)=\sum\{2^{-\alpha}: (f)_\alpha\not\in[T_\alpha],\alpha\in\Nb\}$; this defines a total potential on $\mathbb{N}^{\mathbb{N}}$ provably in $\aca$ (\cite{EkelandSelecta}, together with Lemma~\ref{lemmLSCCompare}).
We will construct a Baire class $1$ function $F: \Nb^\Nb \to \Nb^\Nb$ which `descends along' $V$.
For each $s\in\Nb$, we will define a continuous function $F_{s} : \Nb^\Nb \to \Nb^\Nb$ as follows.
For a given $f \in \Nb^\Nb$, let $I_{f,s}:=\{\alpha<s: (f\upharpoonright s)_{\alpha}\notin T_{\alpha},\alpha\in\Nb\}$.
If $I_{f,s}=\varnothing$, then put $F_{s}(f)=f$.
Otherwise, let $\beta_{s}$ be the $\prec$-smallest element of $I_{f,s}$.
Put $(F_{s}(f))_{\gamma}=(f)_{\gamma}$ if $\gamma\neq \beta_{s}$, and $(F_{s}(f))_{\beta_{s}}$ to be the $s$-approximation of a path of $T_{\beta_{s}}$ computed from (the $s$-approximation of)
$f^{\beta_{s}}:=\bigoplus_{\gamma\prec \beta_{s}}f_{\gamma}$.
(Note that $f^{\beta_{s}}\upharpoonright s\in T^{\beta_{s}}$ by the definition.)
More formally, we define $h=(F_{s}(f))_{\beta_{s}}$ as follows:
for $t<s$ define $h(t)$ inductively as $h(t)=(0,f^{\beta_{s}}\upharpoonright\bar t)$ if $(\forall m\le s)\neg\theta_{0}(m,t,f^{\beta_{s}}\upharpoonright m)$ and $\bar t=\max\{t\}\cup\{|(h(t'))^{\circ}|:t'<t\}$ and $h(t)=(1,f^{\beta_{s}}\upharpoonright\bar t)$ if $\bar t$ is the smallest $u\le s$ such that $(\exists m\le  u)\theta_{0}(m,t,f^{\beta_{s}}\upharpoonright m)$ and $u\ge\max\{t\}\cup\{|(h(t'))^{\circ}|:t'<t\}$, and put $h(t)=0$ for $t\ge s$.  It is a routine to check that such $F_{s}$ is continuous.

If $f\in [\bigoplus_{\alpha\in\Nb}T_{\alpha}]$, then $F_{s}(f)=f$ for any $s\in\Nb$.
Otherwise, let $\hat \beta$ be the $\prec$-smallest $\alpha$ such that $(f)_{\alpha}\notin[T_{\alpha}]$.
Then, for large enough $s$, $\beta_{s}=\hat \beta$.
Thus, $g=\lim_{s\to\infty}F_{s}(f)$ exists and $(g)_{\gamma}=(f)_{\gamma}$ for any $\gamma\neq \hat \beta$ and $(g)_{\hat \beta}\in[T_{\hat \beta}]$.
Moreover, we have $d(f,g)\le 2^{-\hat \beta}\le V(f)-V(g)$.
Therefore, $F=\lim_{s\to\infty}F_{s}$ exists, $(\Nb^{\Nb},F,V)$ forms a Caristi system and its fixed point is a path of $\bigoplus_{\alpha\in\Nb}T_{\alpha}$.
\end{proof}

\begin{Question}
  Is Caristi's theorem for Baire class-$1$ $f$ provable in $\atr$?
\end{Question}

\begin{Theorem}[$\rca$]
The following are equivalent:
  \begin{enumerate}
  \item $\mathsf{TLPP}_0$,
  \item $\mathsf{CFP}$ for Baire $f$,
  \item $\mathsf{CFP}$ for Borel $f$, and
  \item $\mathsf{CFP}$ for Borel $f$ on the Baire space.
  \end{enumerate}
\end{Theorem}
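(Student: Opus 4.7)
The plan is to obtain the cycle $(1) \Rightarrow (2) \Rightarrow (4) \Rightarrow (1)$ and attach $(3)$ via the obvious routes $(1) \Rightarrow (3)$ and $(3) \Rightarrow (4)$. The implications $(1) \Rightarrow (2)$ and $(1) \Rightarrow (3)$ are direct instances of Proposition~\ref{propTLPPtoCar}, and $(3) \Rightarrow (4)$ is merely the specialization to $\spc X = \Nb^\Nb$. For $(2) \Rightarrow (4)$, note that the Baire class-$1$ case of $(2)$ is covered by Theorem~\ref{thm:caristi_for_baire_1}, so $(2)$ already implies $\atr$; inside $\atr$, the Baire space is zero-dimensional so every Borel code on $\Nb^\Nb$ can be converted to a Baire code (see Appendix~\ref{App}), and any Caristi system on $\Nb^\Nb$ with Borel $f$ can thus be re-presented as a Baire one, to which $(2)$ applies. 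The fixed point condition is independent of coding, so this yields $(4)$.

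The main work lies in the reversal $(4) \Rightarrow (1)$, which I would obtain by generalizing Theorem~\ref{thm:caristi_for_baire_1} from $\omega$-length recursions (witnessed by Baire class-$1$ functions) to $\prec$-length ones (witnessed by Borel functions of complexity $\prec$). Using characterization~\ref{itTLPPTwo} of Lemma~\ref{thm:tlpp_equivalent}, the task is: given a well-order $\prec$ and a sequence $\langle T_n \rangle_{n \in \Nb}$ of trees, produce $Z_0, Z_1$ such that $n \in Z_0$ iff $T_n$ has a $\Sigma_\prec^{Z_0 \oplus Z_1 \oplus \langle T_n \rangle}$ path. Following the template of Theorem~\ref{thm:caristi_for_baire_1}, I would set up a family of ``level-$\alpha$'' trees indexed by $\alpha \in \field \prec$ whose joint paths encode the sought pair, a potential $V \colon \Nb^\Nb \to [0,\infty)$ summing $2^{-\mathrm{rk}(\alpha)}$ over the $\prec$-indices at which the input fails to code a path, and a Borel correction function $f$ that at each point overwrites the coordinate of $\prec$-least failure with a $\Sigma_\prec$-computable patch. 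The self-referential aspect of TLPP, namely that $(Z_0, Z_1)$ parametrizes the very quantifier defining it, arises automatically because $f$ is only required to be Borel: by Lemma~\ref{thm:borel_bounds_complexity}, $f(x_*)$ is $\Sigma_\prec^{x_* \oplus \Upsilon}$, so the fixed point has access to its own $\Sigma_\prec$ computations.

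The principal obstacle is tight bookkeeping of the Kleene--Brouwer complexity $\|f\|$: to guarantee that the paths extracted from the fixed point are genuinely $\Sigma_\prec$ in that fixed point, each level-$\alpha$ correction must be effected using only $\Sigma_\prec$-strength operations, uniformly in $\alpha$, and the Borel tree coding $f$ must have ${\text{\sc{kb}}}$-rank bounded by $\prec$ up to a constant. This is precisely where the added expressive room of Borel over Baire class $1$ is needed, and it is the step most likely to require care with the coding conventions from Section~\ref{sec-MetricDefs}. Once $f$ is constructed with the right complexity profile, the verification that a fixed point yields the pair $(Z_0, Z_1)$ required by Lemma~\ref{thm:tlpp_equivalent}\ref{itTLPPTwo} should proceed by transfinite induction along $\prec$, paralleling the final paragraph of the proof of Theorem~\ref{thm:caristi_for_baire_1}.
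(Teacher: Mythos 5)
Your handling of the easy implications coincides with the paper's: $(1)\Rightarrow(2),(3)$ from Proposition~\ref{propTLPPtoCar}, and $(2)\Rightarrow(4)$ by first extracting $\atr$ via Theorem~\ref{thm:caristi_for_baire_1} and then converting Borel codes on $\Nb^\Nb$ to Baire codes (Lemma~\ref{thm:borel_is_baire}). The gap is in $(4)\Rightarrow(1)$, where your sketch transplants the architecture of the $\atr$ reversal into a setting where it cannot work. In characterization~\ref{itTLPPTwo} of Lemma~\ref{thm:tlpp_equivalent} the trees $T_n$ are \emph{given}, indexed by $n\in\Nb$, and possibly pathless; the well-order $\prec$ only calibrates the complexity class $\Sigma_\prec$. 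Your ``level-$\alpha$ trees indexed by $\alpha\in\field{\prec}$'' with potential $\sum 2^{-\mathrm{rk}(\alpha)}$ describes Theorem~\ref{thm:caristi_for_baire_1}, where the trees are \emph{constructed} so that each level, given correct lower levels, provably has a path that the correction function can approximate. For $\mathrm{TLPP}$ this is exactly what fails: a tree $T_n$ may have no $\Sigma_\prec$-definable path, or no path at all, so a function that ``overwrites the coordinate of $\prec$-least failure with a $\Sigma_\prec$-computable patch'' is either partial or must internally decide whether a patch exists---and that decision is the content of $\mathrm{TLPP}$ itself. Nor does Lemma~\ref{thm:borel_bounds_complexity} rescue the self-reference: it bounds the complexity of the \emph{output} of a Borel function and is what makes the forward direction (Proposition~\ref{propTLPPtoCar}) work; it does not give a fixed point ``access to its own $\Sigma_\prec$ computations'' in a reversal.

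The idea you are missing is to build $F$ as a \emph{search for a counterexample to criticality} rather than as a patching operator. With $V(f)=\sum\{2^{-i}:(f)_i\notin[T_i]\}$, let $\prec'$ be the successor of $\prec$, let $Y_f$ satisfy $H(\prec',Y_f,f\oplus\langle T_n\rangle_{n\in\Nb})$, and set $F(f)=g_e:=\psi_e^{Y_f}$ for the least $e$ such that $\psi_e^{Y_f}$ is total and $0<d(f,g_e)\leq V(f)-V(g_e)$, with $F(f)=f$ when no such $e$ exists. The Caristi inequality is built into the definition, the ``else'' branch makes $F$ total even when no improvement is available, and $F$ is Borel because the hierarchy $Y_f$ unwinds into Borel sets by recursion along $\prec'$. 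A fixed point $f_*$ then admits no $\Sigma_{\prec'}^{f_*\oplus\langle T_n\rangle_{n\in\Nb}}$ counterexample to criticality; since splicing a hypothetical $\Sigma_{\prec}^{X\oplus f_*\oplus\langle T_n\rangle_{n\in\Nb}}$ path into any failing coordinate $\beta$ would produce exactly such a counterexample (it moves $f_*$ by at most $2^{-\langle\beta,0\rangle}\leq 2^{-\beta}=V(f_*)-V(g)$), the set $X=\{\beta:(f_*)_\beta\in[T_\beta]\}$ is the witness required by Lemma~\ref{thm:tlpp_equivalent}\ref{itTLPPTwo}. Your concern about bounding $\|F\|$ by $\prec$ is also misplaced: item (4) quantifies over all Borel $f$, so no complexity bound on $F$ is needed in the reversal; the only bookkeeping required is that the counterexamples reachable from $X\oplus f_*$ at level $\prec$ are among the $g_e$ enumerated at level $\prec'$.
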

\begin{proof}
By Proposition \ref{propTLPPtoCar}, the first item implies the third.
The first moreover implies the second, since $\mathsf{TLPP}_0$ implies $\atr$ and so by Lemma \ref{lemBaireisBorel} shows that every Baire function is a Borel function.
To see that the second implies the fourth, Theorem \ref{thm:caristi_for_baire_1} shows that it implies $\atr$, and therefore by Lemma \ref{thm:borel_is_baire}, it implies that every Borel function from $\mathbb{N}^{\mathbb{N}}$ to itself is a Baire function, so the fourth follows.

To complete the loop, it remains to show that the fourth implies the first.
By Theorems~\ref{TheoCritEquivACA} and \ref{thm:caristi_for_baire_1}, we may argue within $\atr$.
Using Lemma \ref{thm:tlpp_equivalent}, we will show the equivalent form: given a sequence of trees $\sel T_n \ser_{n \in \Nb}$ and a well-order $\prec$, we will obtain a pair of sets $Z_0,Z_1$ so that $n\in Z_0$ iff there is a path through $T_n$ which is $\Sigma_\prec^{Z_0\oplus Z_1\oplus\sel T_n \ser_{n \in \Nb}}$.
To show this, we define $V:\Nb^\Nb \to [0, \infty)$ by $V(f)=\sum\{2^{-i}: (f)_i\not\in[T_i]\}$ as in the proof of Theorem \ref{thm:caristi_for_baire_1}.

We wish to define a Borel function $F:\mathbb{N}^{\mathbb{N}}\rightarrow\mathbb{N}^{\mathbb{N}}$ by having $F(f)$ look for a $g$ which is a counterexample to $f$ being a critical point of $V$ and which is not too much more complicated than $f$.  Let $\prec'$ be the successor of $\prec$.  Then pick a uniform enumeration $\psi_0, \psi_1, \psi_2, \ldots$ of the partial computable functions and, given $f$, we let $Y_f$ be the set such that $H(\prec',Y_f,f\oplus\sel T_n \ser_{n \in \Nb})$.  Then for each $e$ such that $\psi_e^{Y_f}$ is total, we can let $g_e=\psi_e^{Y_f}$.  We wish to choose $F(f)$ to be $g_e$ where $e$ is least such that $0<d(f,g_e)\leq V(f)-V(g_e)$ if there is such an $e$, and to be $f$ otherwise.

We need to check that $F$ is Borel.  It suffices to show that, for any finite sequence $\sigma \in \Nb^{<\Nb}$, the set of $f$ such that there is $e$ such that $\psi^{Y_f}_e$ exists and satisfies $\sigma\secsub g_e$ and $0<d(f,g_e)\leq V(f)-V(g_e)$ is Borel.  

The main step is translating the construction of $Y_f$ into a statement about Borel sets.  By recursion on $\alpha \in\field {\prec'}$ we argue that the set of $f$ such that $m\in (Y_f)_\alpha$ is Borel: when $\alpha$ is minimal, $m\in (Y_f)_\alpha$ iff $\theta_{*}(m,\varnothing,f\oplus\sel T_n \ser_{n \in \Nb})$, and since $\theta_{*}$ is $\Sigma^{0}_{1}$, the set of such $f$ is open, namely a union of those initial segments that witness this.

Suppose that, for all $m$, the set of $f$ such that $m\in (Y_f)_{\prec\alpha} $ is Borel.  Then $m\in(Y_f)_\alpha$ if and only if $\theta_{*}(m,(Y_f)_{\prec\alpha},f\oplus\sel T_n\ser_{n\in\Nb})$.  This is a union of sets which are, by the recursion, Borel sets intersected with open sets, and is therefore also a Borel set.
In particular, for each $e$, the set of $f$ such that $\psi_e^{Y_f}$ is total is a Borel set. 

The set of $f$ such that $V(f)<q$ is easily seen to be Borel.
Since, for any $i$ and $j$, the set of $f$ such that $\psi_e^{Y_f}(i)=j$ is Borel, also for any rational $q$, the set of $f$ such that $0<d(f,g_e)<q$ is Borel, as is the set of $f$ such that $ V(g_e)<q$.  Then the set of $f$ such that $0<d(f,g_e)\leq V(f)-V(g_e)$ is precisely the set of $f$ such that, for every pair $q,q'$ such that $V(f)<q$ and $V(g_e)\geq q'$, $0<d(f,g_e)<q-q'$, which is also Borel.

Therefore the function $F$ is Borel.  By $\mathsf{CFP}$ for Borel functions, there is a fixed point $f_*$ for $F$, and so $F(f_*)=f_*$.  Let $X=\{\beta: (f_*)_\beta\in[T_\beta]\}$.  Clearly if $\beta \in X$ then $T_\beta$ has a path.
Conversely, suppose that $\beta \notin X$, and suppose for a contradiction that $T_\beta$ has a path $h$ which is $\Sigma_{\prec}^{X\oplus f_* \oplus\sel T_n \ser_{n \in \Nb}}$.
Then we may define $g(\sel \gamma ,n\ser)$ by $h(n)$ if $\gamma =\beta $ and $f_*(\sel \gamma,n\ser)$ otherwise, so $g$ is $\Sigma_{\prec}^{X\oplus f_* \oplus\sel T_n\ser_{n \in \Nb}}$, and therefore $\Sigma_{\prec'}^{f_*\oplus\sel T_n\ser_{n \in \Nb}}$.
It follows that $g=g_e$ for some $e$.

Moreover, $0<d(f_*,g)\leq 2^{-\sel \beta,0\ser}\leq 2^{-\beta}=V(f_*)-V(g)$, which means that $F(f_*) = g_{e_0}$ for the least $e_0$ with $0<d(f_*,g_{e_0})\leq  V(f_*)-V(g_{e_0})$,
and so we must have $F(f_*)\neq f_*$, which is a contradiction.
Therefore $X$ and $f_*$ are the necessary witnesses.
\end{proof}

\appendix

\section{Remarks on Function Codes}\label{App}

In order to maintain some generality in the formalization of Caristi's theorem, we have based our presentation on Baire and Borel codes.
In this Appendix we establish how these codes relate to each other, as well as to other codes used in the literature.
 
First, we recall the standard coding of continuous functions used in e.g.~\cite{SimpsonSOSOA}.

\begin{Definition}[$\rca$; \cite{SimpsonSOSOA}*{Definition~II.6.1}]\label{def-continuous}
Let $\spc X = \widehat X$ and $\spc Y = \widehat Y$ be complete separable metric spaces.  A \emph{continuous partial function} $f\colon \spc X \to \spc Y$ is coded by a set $\Phi \subseteq \Nb \times X \times \Qb^{>0} \times Y \times \Qb^{>0}$ that satisfies the properties below.  Let us write $\ball ar \fcode \Phi \ball bq$ for $\exists n(\sel n, a, r, b, q \ser \in \Phi)$.  Then, for all $a, a' \in X$, all $q, q' \in \Qb$, and all $r, r' \in \Qb^{>0}$, $\Phi$ should satisfy:
\begin{enumerate}[label=({\sc cf} \arabic*)]
\item if $\ball{a}{r} \fcode \Phi \ball{b}{q}$ and $\ball{a}{r} \fcode \Phi \ball{b'}{q'}$, then $d(b,b') \leq q+q'$;

\item if $\ball{a}{r} \fcode \Phi \ball{b}{q}$ and $\ball{a'}{r'} \ballsub \ball{a}{r}$, then $\ball{a'}{r'}  \fcode \Phi \ball{b}{q}$;

\item if $\ball{a}{r} \fcode \Phi \ball{b}{q}$ and $\ball{b}{q} \ballsub \ball{b'}{q'}$, then $\ball{a}{r} \fcode \Phi \ball{b'}{q'}$.
\end{enumerate}

A point $x \in \spc X$ is in the domain of the function $f$ coded by $\Phi$ if, for every $\varepsilon > 0$, there are $\ball{a}{r}\fcode \Phi \ball{b}{s}$ such that $d(x, a) < r$ and $s < \varepsilon$. If $x \in  \dom(f)$, we define the value $f(x)$ to be the unique point $y \in \spc Y$ such that $d(y, b) \leq  s$ for all $\ball{a}{r} \fcode \Phi \ball{b}{s}$ with $d(x, a) < r$.
\end{Definition}

\begin{Lemma}[$\rca$]\label{lemmContBorel}
Let $\spc X,\spc Y$ be metric spaces.
A function $f\colon \spc X\to \spc Y$ can be coded as a continuous function in the sense of Definition~\ref{def-continuous} if and only if it can be coded as a Borel function where the preimage of every open ball is open.
\end{Lemma}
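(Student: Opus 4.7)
The plan is to exhibit explicit translations in both directions, each $\Delta^0_1$-definable from the given code, and to verify that the resulting object satisfies the relevant coding axioms.

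For the continuous-to-Borel direction, given $\Phi$ as in Definition \ref{def-continuous}, I would let $\Upsilon(\ball br)$ be the (syntactically open) Borel code consisting of the union of all basic open balls $\ball as$ of $\spc X$ for which there exists $\ball{b'}{r'}$ with $\ball as \fcode{\Phi} \ball{b'}{r'}$ and $\ball{b'}{r'} \ballsub \ball br$. To see that this semantically equals $f^{-1}(\ball br)$, the containment $\Upsilon(\ball br) \subseteq f^{-1}(\ball br)$ is immediate from the value clause, which gives $d(f(x), b') \leq r'$ and hence $d(f(x), b) < r$. The reverse containment needs some strict-containment juggling: given $f(x) \in \ball br$, pick a rational $\ball{b^*}{r^*}$ with $f(x) \in \ball{b^*}{r^*} \ballsub \ball br$ and with $r^*$ small compared to $r - d(f(x), b)$, apply the domain property of $\Phi$ to obtain $\ball{a^*}{s^*} \fcode{\Phi} \ball{b^*}{r^*}$ with $d(x, a^*) < s^*$, and invoke (cf2) to shrink to a suitable $\ball as$ containing $x$. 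The Borel function conditions \ref{itBorelOne} and \ref{itBorelTwo} then follow automatically from this preimage identity.

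For the Borel-to-continuous direction, I would assume that each $\Upsilon(\ball br)$ is presented as a union of basic open balls of $\spc X$ and define $\Phi$ by declaring $\ball as \fcode{\Phi} \ball br$ precisely when there exist $\ball{a''}{s''}$ appearing in the code for some $\Upsilon(\ball{b'}{r'})$ with $\ball as \ballseq \ball{a''}{s''}$ and $\ball{b'}{r'} \ballsub \ball br$. Conditions (cf2) and (cf3) fall out from the transitivity of $\ballseq$ and $\ballsub$, and matching the limit-of-values definitions of $f$ under the two codes is a bookkeeping exercise parallel to the forward direction.

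The main obstacle is verifying (cf1), the only step that genuinely uses the Borel separation axiom \ref{itBorelTwo}. Suppose $\ball as \fcode{\Phi} \ball br$ and $\ball as \fcode{\Phi} \ball{b'}{r'}$ via inner witnesses $\ball{b_1}{r_1} \ballsub \ball br$ and $\ball{b_2}{r_2} \ballsub \ball{b'}{r'}$. The essential observation is that the rational center $a$, viewed as a point of $\spc X$ via its constant sequence, lies in $\ball as$ and hence in both $\Upsilon(\ball{b_1}{r_1})$ and $\Upsilon(\ball{b_2}{r_2})$. Separation then forces $\ball{b_1}{r_1} \cap \ball{b_2}{r_2} \neq \varnothing$, yielding $d(b_1, b_2) < r_1 + r_2$, and combining this with the two $\ballsub$-relations via the triangle inequality gives $d(b, b') < r + r'$, as required. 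A caveat worth noting is that the backward construction assumes the Borel code for each preimage is presented in the natural open-set form; a semantically open but arbitrarily coded Borel set cannot in general be converted to this form within $\rca$, so the equivalence is really at the level of these natural codings.
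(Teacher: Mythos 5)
Your proposal is correct and follows essentially the same translation as the paper's (whose proof is only a two-sentence sketch): the forward direction enumerates $\ball as$ into $\Upsilon(\ball br)$ exactly when $\ball as \fcode{\Phi} \ball{b'}{r'}$ for some $\ball{b'}{r'} \ballsub \ball br$, the backward direction reads the union-of-balls representation of each preimage back into a continuous code, and your saturation under $\ballseq$/$\ballsub$ together with the verification of ({\sc cf}~1) via the separation axiom supplies details the paper omits. One small slip worth fixing: the domain clause of Definition~\ref{def-continuous} does not let you request $\ball{a^*}{s^*} \fcode{\Phi} \ball{b^*}{r^*}$ for a target ball chosen in advance---it only yields some $\ball as \fcode{\Phi} \ball{b''}{s''}$ with $d(x,a)<s$ and $s''$ as small as you like; but since then $d(f(x),b'')\leq s''$, choosing $s''<\tfrac12\bigl(r-d(f(x),b)\bigr)$ forces $\ball{b''}{s''}\ballsub\ball br$, which is all the reverse containment requires.
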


\begin{proof}[Proof sketch.]
If $\Phi$ is a code for a continuous function $f$ and $\ball bs$ is any open ball in $\spc Y$,
we can enumerate $f^{-1}[\ball bs] $
by enumerating $\ball ar$ if $\ball ar \fcode \Phi \ball {b'}{s'} $,
where $d_\spc Y(b,b') + s'<s$ (the latter is needed since $\ball ar \fcode \Phi \ball{b'}{s'} $ only guarantees $\ball ar \subseteq  f^{-1}[\overline{{\ball bs}}] $).
Conversely, if $\Psi$ is a Borel code for $f$, we note that for $\ball bs\subseteq \spc Y$, $f^{-1}[\ball bs] $ is represented in the form $ \bigcup_{i\in\mathbb N} \ball{a_i}{r_i}$.
We thus enumerate $\ball ar \fcode \Phi \ball bs $ if $a=a_i$ and $r=r_i$ for some $i$.
\end{proof}
Indeed, the proof of Lemma~\ref{lemmContBorel} is effective in the sense that there
exist (provably in $\rca$) $\Delta^0_1$ formulas which define Turing functionals for these conversions of codes, and thus any sequence of codes of functions in one way can be converted to the sequence of codes in the other way.

The results of~\cite{EkelandSelecta} were originally stated with respect to the following coding of lower semi-continuous functions, but as we will see, it is equivalent to our Baire representation for them.

\begin{Definition}[$\rca$; \cite{EkelandSelecta}*{Definition~4.1}]\label{def-lsc}
Let $\spc X$ be a complete separable metric space.  A \emph{lower semi-continuous partial function} $V\colon \spc X \to {\Rb}$ is coded by a set ${\Psi} \subseteq \Nb \times X \times \Qb^{>0} \times \Qb$ that satisfies the properties below.  Let $\ball{a}{r} \vcode \Psi q$ denote $\exists n(\la n, a, r, q \ra \in {\Psi})$.
Then $\Psi$ must satisfy that for all $a, a' \in X$, all $q, q' \in \Qb$, and all $r, r' \in \Qb^{>0}$, 
\begin{enumerate}[label=({\sc lsc} \arabic*)]

\item if $\ball{a}{r} \vcode \Psi q$ and $\ball{a'}{r'}  \ballsub \ball{a}{r}$, then $\ball{a'}{r'} \vcode \Psi q$, and

\item if $\ball{a}{r} \vcode \Psi q$ and $q' < q$, then $\ball{a}{r} \vcode \Psi q'$.

\end{enumerate}

A point $x \in \spc X$ is in the domain of the function $V$ coded by $\Psi$ if 
\begin{align*}
y = \sup \{q \in \Qb : (\exists \la a,r \ra \in X \times \Qb^{>0})(\ball{a}{r} \vcode \Psi q \andd d(x,a) < r)\}
\end{align*}
exists, in which case $V(x) = y$.
If $V$ has codomain $[0,\infty)$ (in the sense that $\ball{a}{r} \vcode \Psi 0$ for every $a,r$), we call $V$ a {\em potential.}
\end{Definition}

Let $\spc X$ be a complete separable metric space.  The idea behind Definition~\ref{def-lsc} is that $\Psi$ enumerates pairs $\la B_r(a), [q, \infty) \ra$ with the property that if $V$ is the function being coded by $\Psi$ and $x$ is in $B_r(a) \cap {
\rm dom}(V)$, then $V(x)$ is in $[q, \infty)$.

\begin{Lemma}\label{lemmLSCCompare}\
\begin{enumerate}
\item Over $\rca$, it is provable that a function $f$ has an lsc code as given by Definition~\ref{def-lsc} if and only if it is a pointwise increasing limit of continuous functions.

\item Over $\pica$, it is provable that for every potential $f$ and every $\alpha>0$, the $\alpha$-envelope of $f$ exists.
\end{enumerate}

\end{Lemma}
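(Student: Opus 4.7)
For part (1), I start with the direction \emph{(continuous limit implies lsc code)}. Given $V = \lim_n V_n$ with each $V_n$ continuous, say coded by $\Phi_n$ as in Definition~\ref{def-continuous}, I build $\Psi$ by enumerating $\sel n, a, r, q \ser \in \Psi$ whenever there exist $b, s$ with $\ball{a}{r} \fcode{\Phi_n} \ball{b}{s}$ and $b - s > q$. Both axioms (lsc 1) and (lsc 2) of Definition~\ref{def-lsc} are immediate, and the coded value at $x$ recovers $\sup_n V_n(x) = V(x)$, because each $V_n(x)$ is approximated arbitrarily well from below by values $b - s$ produced by $\Phi_n$ at points $(a,r)$ with $d(x,a)<r$.

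For the converse direction \emph{(lsc code implies continuous limit)}, I enumerate $\Psi$ as a sequence $\sel (a_i, r_i, q_i) \ser_{i \in \Nb}$ and introduce the piecewise-linear bumps
\[ \phi_{i,n}(x) = q_i \cdot \max\bigl(0,\, \min(1,\, n(r_i - d(x, a_i)))\bigr), \]
setting $V_n = \max_{i \leq n} \phi_{i,n}$. Each $V_n$ has an explicit continuous code in $\rca$ (it is a finite max of piecewise-linear functions of the distances $d(\cdot, a_i)$), and $V_n \leq V_{n+1}$ because both the enumeration range and the sharpness grow with $n$. The bound $V_n \leq V$ holds since $\phi_{i,n}$ vanishes outside $B_{r_i}(a_i)$ and is bounded by $q_i$ inside, with $V \geq q_i$ on $B_{r_i}(a_i)$ by (lsc). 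For pointwise convergence, given $\varepsilon > 0$, pick $(a_i, r_i, q_i) \in \Psi$ with $d(x, a_i) < r_i$ and $q_i > V(x) - \varepsilon$; for $n \geq \max(i,\, 1/(r_i - d(x, a_i)))$ the bump $\phi_{i,n}(x)$ equals $q_i$, so $V_n(x) > V(x) - \varepsilon$.

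For part (2), I work in $\pica$ with a potential $V = \lim_n V_n$ and $\alpha > 0$, and define $V_{(\alpha)}(x) = \inf_{y \in X}(V(y) + \alpha d(x,y))$. Since $y \mapsto V(y) + \alpha d(x,y)$ is lsc (the sum of an lsc and a continuous function) and $X$ is dense in $\spc X$, its open superlevel sets meet $X$ when nonempty, so $\inf_{y \in X}$ equals $\inf_{y \in \spc X}$ and our definition matches the envelope. In $\pica$, the real $V(y) = \sup_n V_n(y)$ exists for each $y \in X$ (as a bounded monotone sequence), so $\sel V(y_i) + \alpha d(x, y_i) \ser_i$ is a definable sequence of reals and its infimum (bounded below by $0$) is again a real. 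A direct estimate gives $|V_{(\alpha)}(x) - V_{(\alpha)}(x')| \leq \alpha\,d(x,x')$, and this Lipschitz bound together with the pointwise values on $X$ packages $V_{(\alpha)}$ as a continuous function code.

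The main obstacle I anticipate lies in the second direction of part (1): simultaneously arranging continuity, pointwise monotonicity in $n$, and pointwise convergence to $V$ forces the specific double-indexed bump construction, where both the enumeration range and the sharpness grow with $n$, and each of the three properties rests on this precise choice. For part (2), the remaining technical work after the definition of $V_{(\alpha)}$ is the routine translation of the pointwise values on $X$ and the Lipschitz estimate into a proper continuous code.
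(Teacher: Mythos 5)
Your part (1) is correct and is essentially the paper's argument in both directions: the paper extracts the code $\Psi$ from the continuous codes of the $V_n$ exactly as you do, and for the converse it uses the same diagonal construction $\max_{i<n} q_i\chi^n_{B_i}$, where $\chi^n_{B_i}$ is a sharpening ramp supported on $B_i$ --- your $\phi_{i,n}$ with the sharpness and the enumeration range both growing in $n$.

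Part (2), however, has a genuine gap. You claim that $\inf_{y \in X}\bigl(V(y)+\alpha d(x,y)\bigr) = \inf_{y \in \spc X}\bigl(V(y)+\alpha d(x,y)\bigr)$ because $g(y)=V(y)+\alpha d(x,y)$ is lower semi-continuous and $X$ is dense. This has the semicontinuity backwards: lsc makes the \emph{superlevel} sets $\{g>c\}$ open, which lets you compute $\sup g$ from a dense set, not $\inf g$. To pass the infimum to a dense set you would need the \emph{sublevel} sets to be open, i.e.\ upper semicontinuity. Concretely, let $y_0 \in \spc X \setminus X$ and let $V$ be the indicator function of $\spc X\setminus\{y_0\}$, which is a non-negative lsc potential with a perfectly good code in the sense of Definition~\ref{def-lsc}. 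Then $V_{(\alpha)}(y_0)=0$ (witnessed by $y=y_0$), while $\inf_{y\in X}\bigl(V(y)+\alpha d(y_0,y)\bigr)=1$. So your construction produces the wrong function. This is not a repairable technicality: the whole reason the lemma is stated over $\pica$ rather than $\aca$ is that $\{(a,q): q< V_{(\alpha)}(a)\}$ is defined by a genuinely $\Pi^1_1$ condition quantifying over all points $y\in\spc X$. Indeed, the paper's Section~6 introduces the ``$\Sigma^Z_\prec$ relativizations'' $V'_n(x)=\inf_{y\in\spc X'}\bigl(V(y)+nd(x,y)\bigr)$, with the infimum over a restricted class of points, precisely because such a restricted infimum is in general a \emph{different} function from the true envelope (Lemma~\ref{lemmPrimeSame} only recovers agreement on the restricted class itself). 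The paper does not prove part (2) in situ but defers to the Ekeland paper \cite{EkelandSelecta}; in any case a correct proof must handle the second-order quantification over $\spc X$ via $\Pi^1_1$ comprehension rather than reduce it to a countable infimum.
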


\begin{proof}[Proof sketch.]
The second claim is proven in~\cite{EkelandSelecta}, so we focus on the first.
First we approximate the indicator function of an open ball, $\chi_{\ball ar}$; for $\varepsilon \in (0,1)$, define $\chi_\varepsilon (x)$ to be $1$ if $x\in \ball a{r(1-\varepsilon)}$, $0$ if $x\notin \ball ar$, and otherwise $\chi_\varepsilon(x) = \nicefrac 1\varepsilon - \nicefrac{d(a,x)}{ \varepsilon r}$.
Then, if we define $\chi^n_{\ball ar}:= \chi_{2^{-n-1}}$, it is clear that $\chi^n_{\ball ar} \to \chi_{\ball ar}$ as $n\to\infty$.
If $\sel \sel B_i,q_i\ser : i<m\ser $ is a tuple of pairs consisting of an open ball and a positive rational, we may similarly approximate $\max_{i<m} q_i \chi_{B_i}$ by $\max_{i<m} q_i \chi^n_{B_i}$.
If we enumerate an lsc code $\Psi$ as $\{\sel B_i,q_i\ser: i\in \Nb\}$, we may then approximate the function $V$ coded by $\Psi$ by diagonally approximating $V$ as  $\max_{i<n} q_i \chi^n_{B_i}$.

Conversely, if $V = \lim_{n\to \infty}V_n$ where $\sel V_n:n\in\Nb\ser$ is pointwise increasing, we may define a code for $V$ by putting $B \vcode\Psi q$ if there is $n$ such that $V_n(x) > q$ for all $x\in B$, which may be extracted from the continuous code for $V_n$.
\end{proof}

\begin{Lemma}[$\atr$]\label{lemBaireisBorel}
  Every Baire function is Borel.
\end{Lemma}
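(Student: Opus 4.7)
The plan is to translate the well-founded tree $\Xi$ coding $f_\Xi$ into a system of Borel codes $\{\Upsilon_{\sigma,\sel b,r\ser} : \sigma\in\Xi,\ b\in Y,\ r\in\Qb^{>0}\}$ by recursion on $\Xi$, arranged so that $U_{\Upsilon_{\sigma,\sel b,r\ser}}(\sel\ser) = f_{\Xi,\sigma}^{-1}(\ball br)$. The desired Borel function code is then $\Upsilon_{\sel b,r\ser} := \Upsilon_{\sel\ser,\sel b,r\ser}$.

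At each leaf $\sigma$ of $\Xi$ labelled by a continuous $g$, the preimage $g^{-1}(\ball br)$ is open, so Lemma~\ref{lemmContBorel} provides a Borel code for it uniformly in $b,r$. At a non-leaf $\sigma$, I would use the identity
\[
  f_{\Xi,\sigma}^{-1}(\ball br) \;=\; \bigcup_{\substack{s\in\Qb^{>0}\\ s<r}}\ \bigcup_{N\in\Nb}\ \bigcap_{n\geq N}\ \bigcap_{m\in\Nb} f_{\Xi,\sigma^\frown\sel n\ser}^{-1}\bigl(\ball b{s+2^{-m}}\bigr),
\]
which follows from $f_{\Xi,\sigma}(x)=\lim_n f_{\Xi,\sigma^\frown\sel n\ser}(x)$ together with the observation that $d(y,b)<r$ iff there is a rational $s<r$ with $y\in\overline{\ball bs}=\bigcap_m \ball b{s+2^{-m}}$. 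Given the codes $\Upsilon_{\sigma^\frown\sel n\ser,\sel b,s+2^{-m}\ser}$ from the recursive hypothesis, I graft them underneath a short stack of union/intersection nodes, with the required parity bookkeeping (odd labels for union, even for intersection), to obtain $\Upsilon_{\sigma,\sel b,r\ser}$. The resulting tree is well-founded because $\Xi$ is, which can be confirmed via the Kleene--Brouwer order (provably well-founded in $\aca$).

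I would then verify that $\Upsilon:=\Upsilon_{\sel\ser}$ satisfies Definition~\ref{defBorel}: the monotonicity clause \ref{itBorelOne} is preserved at every step of the recursion since it holds at continuous leaves and is stable under the union/intersection operations; the disjointness clause \ref{itBorelTwo} follows from totality of $f_\Xi$, since a unique image cannot lie in two disjoint balls. Finally, I would prove by arithmetical transfinite induction along the Kleene--Brouwer linearization of $\Xi$---available in $\atr$---that for every $\sigma\in\Xi$ and $x\in\spc X$, $x\in U_{\Upsilon_{\sigma,\sel b,r\ser}}(\sel\ser)$ iff $f_{\Xi,\sigma}(x)\in\ball br$; the base case is the continuous-leaf case, and the inductive step is precisely the displayed identity.

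The main obstacle is a foundational one rather than a combinatorial one. The tree $\Upsilon$ itself is assembled by essentially syntactic means from $\Xi$, but the two notions that need to be compared---membership in a Borel code and the value of a Baire function---are both defined by well-founded recursion along unbounded trees, and so only become meaningful statements of second-order arithmetic once $\atr$ is available. The entire verification therefore hinges on being allowed to perform arithmetical transfinite induction along $\Xi$, which is exactly the strength supplied by $\atr$.
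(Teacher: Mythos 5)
Your proposal is correct and follows essentially the same route as the paper: a recursion on $\Xi$ that handles continuous leaves via Lemma~\ref{lemmContBorel} and limit nodes via a countable union/intersection identity for the preimage of a ball under a pointwise limit, justified by transfinite recursion in $\atr$. The paper's identity is the slightly leaner $f^{-1}[B_r(a)]=\bigcup_{r'<r}\bigcup_n\bigcap_{m\geq n}f_m^{-1}[B_{r'}(a)]$, which dispenses with your extra intersection over $m$ by observing that eventual membership in $B_{r'}(a)$ with $r'<r$ already forces the limit into $B_r(a)$.
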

\begin{proof}
  Let $f=f_\Xi$ be a Baire function coded by $\Xi$.  It suffices to show that for all $B_r(a)$, the inverse image $f^{-1}[B_r(a)]$ is a Borel set in a uniform way.  We show this by recursion on $\Xi$.
  The case where $f$ is continuous is given by Lemma \ref{lemmContBorel}, so we assume otherwise.

  Suppose we have $f=\lim_{n\rightarrow \infty}f_n$ and each $f_n$ is given as a Borel function.  Then we may set
\[f^{-1}[B_r(a)]=\bigcup_{r'<r}\bigcup_n\bigcap_{m\geq n}f_m^{-1}[B_{r'}(a)].\]
\end{proof}

In general it is not true that every Borel function is Baire, but for many specific spaces this does hold.
However, it does hold for {\em zero-dimensional spaces,} are spaces that have a basis consisting of clopen balls.

\begin{Lemma}[$\atr$]\label{thm:N_borel_is_baire}
  If $\mathcal{X}$ is zero-dimensional, then every Borel function from $\mathcal{X}$ to $\mathbb{N}$ is Baire.
\end{Lemma}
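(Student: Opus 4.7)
The plan is to build a Baire code for $f$ by recursion on the Borel codes of its fibers $f^{-1}(\{n\})$.

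First, I would verify a few closure properties for the class of Baire functions, by induction on the Baire tree structure. Every continuous function is Baire by definition, being its own leaf. Sums, products, maxima, and minima of Baire functions are Baire: recurse on the Baire tree of one of the two arguments, and when reaching a limit node $f = \lim_n f_n$, use $f + g = \lim_n(f_n + g)$ (and analogously for the other operations). A pointwise limit $\lim_n f_n$ of Baire functions is visibly Baire: the root is a limit node whose $n$th child is the Baire code for $f_n$.

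Next, using that in a zero-dimensional space the basic open balls are clopen, I would show by $\Delta^0_1$-transfinite recursion along the well-founded tree of a Borel code $S$ that the indicator $\chi_S$ of the coded Borel set is Baire. At a leaf labeled $\ball ar$, $\chi_S$ is continuous. At a union node where $U_S = \bigcup_k U_{S_k}$, take $\chi_S = \lim_n \max_{k\le n}\chi_{S_k}$, combining the closure properties above. Intersection nodes use $\min$ symmetrically.

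Finally, let $\Upsilon$ code $f\colon\spc X\to\Nb$. Since $B_{1/2}(n) = \{n\}$ in $\Nb$, each $\Upsilon_{\sel n,1/2\ser}$ is a Borel code $S_n$ for $f^{-1}(\{n\})$, and the Baire indicators $\chi_n$ from the previous step partition unity pointwise. Define
\[
f_k(x) \;=\; \sum_{n=0}^{k} n\,\chi_n(x) \;+\; (k+1)\left(1 - \sum_{n=0}^{k}\chi_n(x)\right),
\]
which is Baire by the closure properties. For each $x$, taking $k\ge f(x)$ yields $f_k(x) = f(x)$, so $f = \lim_k f_k$ pointwise and the limit closure produces a Baire code for $f$.

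The main obstacle will be formalizing the transfinite recursion producing genuinely well-founded Baire trees inside $\atr$: each recursive step must replace a Borel-code node by a finite-depth Baire-tree gadget above the Baire trees constructed for its children, and well-foundedness must transfer across this construction. Once this is set up, the closure-under-operations argument, although recursive along the Baire trees themselves, is then straightforward.
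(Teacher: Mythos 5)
Your proof is correct in outline but follows a genuinely different route from the paper's. The paper never passes through indicator functions: it first normalizes the fiber codes to ``clean'' Borel codes (alternating unions and intersections, leaves in the algebra $\mathcal{A}$ generated by the clopen balls), and then builds a single well-founded tree of total $\Nb$-valued functions $f_\sigma$, each step peeling one union--intersection layer off the codes of \emph{all} fibers simultaneously and using a least-index trick to keep the approximating fibers disjoint and exhaustive; well-foundedness is certified by explicit maps $\pi^n_\sigma$ into the Kleene--Brouwer order of the combined code tree $\Upsilon$. Your decomposition---closure of Baire functions under finite sums, $\max$, $\min$ and pointwise limits; indicators of coded Borel sets are Baire, by recursion on the code, with zero-dimensionality entering only at the leaves; reassembly of $f$ from the fiber indicators---is the classical descriptive-set-theoretic argument and is more modular, with reusable closure lemmas and a transparent final step. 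What the paper's version buys is that the rank bookkeeping happens once, into the fixed well-order $\mathrm{KB}(\Upsilon)$, whereas your nested $\max$/$\min$ constructions add ranks, so certifying well-foundedness of the Baire trees you produce needs a somewhat larger well-order (on the order of $\omega^{\mathrm{KB}(\Upsilon)}$, which is still unproblematic since $\aca$ already proves it well-founded). Two points to tighten: the two-argument closure properties need a nested recursion (continuous-plus-Baire first, by recursion on the second tree, and only then Baire-plus-Baire by recursion on the first), rather than a recursion on one argument alone; and you should note that every function appearing in your trees is $\Nb$-valued---automatic here, since each limit you take is an eventually constant sequence of integers and each leaf is a finite integer combination of indicators of clopen sets---so that the object produced is a Baire function into $\Nb$ rather than into $\Rb$.
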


Before beginning the proof, it will be helpful to refine our definitions a bit.  Let $\mathcal{A}$ be the algebra generated by the basic clopen sets of $\mathcal{X}$---that is, the collection of sets generated from basic clopen sets by complements, finite unions, and finite intersections.  We can choose some encoding of the elements of $\mathcal{A}$ by natural numbers.

We next modify our notion of a Borel code slightly.  First, we restrict ourselves to codes where the levels alternate between unions and intersections---that is, which have the form $\bigcup_n\bigcap_m\bigcup_p\cdots$.  We can easily obtain a code with this property from one without by compressing runs of numbers with the same parity.  For example, when $\sigma{}^\frown\sel 2n\ser\in S$ is not a leaf, we can replace this node with nodes of the form
\[\sigma{}^\frown\sel 2 \sel 2n,2m_1,\ldots,2m_k \ser \ser\]
such that $\sigma{}^\frown\sel 2n,2m_1,\ldots,2m_k\ser\in S$, and similarly converting runs of odd numbers to a single odd number.

Second, we require that the top level be a union---that is, unless $S$ contains only a single leaf, we require that the unique $n$ so that $\sel n\ser\in S$ be odd.  This is easily arranged, because if $S$ does not have this property then we replace it with $\{\sel 1\ser^\frown \sigma: \sigma\in S\}$.

 Finally, we allow leaves to be labeled by elements of $\mathcal{A}$; this does not change what sets are Borel, though it can slightly reduce the complexity.  For purposes of the proof, we call these \emph{clean Borel codes}.  We note that, because leaves are labeled by elements of $\mathcal{A}$, a clean Borel code is not quite a Borel code.  Nonetheless, the basic properties of Borel codes hold for clean Borel codes with no changes to the proofs.

The advantage to clean Borel codes is that we can easily take finite unions and intersections without changing the complexity, and can take the complement while increasing the complexity by at most one (because we need to add an extra, trivial, union step at the root).

\begin{proof}[Proof of Lemma \ref{thm:N_borel_is_baire}]
We assume $f$ is given as a clean Borel function---that is, the code for $f$ gives us, for each $n$, a clean Borel code for $f^{-1}(n)$.

We will define a well-founded tree $U$ and, for each node $\sigma\in U$, a clean Borel function $f_\sigma$.  Additionally, we need to give some sort of ordinal bound on the Borel codes we use, which will ensure that they are getting simpler as we progress to larger nodes in $U$; we take an approach which is inefficient but relatively simple to describe.  Take the tree combining all the Borel codes for $f$---that is, the tree $\Upsilon$ with branches indexed by $\mathbb{N}$, and above each $n$, the Borel code $S^{n}$ for $f^{-1}(n)$.  Then $\Upsilon$ is a well-founded tree.

For each $\sigma$ and each $n$, we will have an assignment $\pi_{\sigma}^n$ from the tree $S_{\sigma}^n$ encoding $f^{-1}_\sigma(n)$ to $\Upsilon$ so that if $\tau\sqsubset\tau'$ then $\pi_{\sigma}^n(\tau')<_{{\text{\sc{kb}}}(\Upsilon)}\pi_{\sigma}^n(\tau)$.  We have $\sel\ser\in U$ and $f_{\sel\ser}=f$.  The map $\pi_{\sel\ser}^n$ is the inclusion of $S^{n}$ in $\Upsilon$.

If, for every $n$, $f_\sigma^{-1}(n)$ is presented as an element of $\mathcal{A}$ (that is, the clean Borel code for $f_\sigma^{-1}(n)$ is simply a single leaf labeled by an element of $\mathcal{A}$) then $\sigma$ is a leaf of $U$.

Otherwise, we will define a sequence of functions $f_{\sigma{}^\frown\sel s\ser}$ so that $\lim_{s\rightarrow\infty}f_{\sigma^\frown\sel s\ser}=f_\sigma$.
If any $f_\sigma^{-1}(n)=B\in\mathcal{A}$, we may replace it with $f_\sigma^{-1}(n)=\bigcup_{i=1}^1 B$, so we assume that, for all $n$, $f_\sigma^{-1}(n)=\bigcup_i S^n_i$. 
Suppose that either $\{\pi_{\sigma}^n(\sel\ser): n\in\Nb\}$ doesn't have a largest element or $S^n_i\in\mathcal{A}$ for any $i,n\in\Nb$. 
Then we define 
\[f_{\sigma^\frown\sel s\ser}^{-1}(n)=\bigcup_{i\leq s}S^n_i\]
for $n<s$, 
\[f_{\sigma^\frown\sel s\ser}^{-1}(s)=\bigcap_{n<s}\bigcap_{i\leq s}\overline{S^n_i}\]
and $f_{\sigma^\frown\sel s\ser}^{-1}(n)=\varnothing$ for $n>s$.  Then it is immediate that $\lim_{s\rightarrow\infty}f_{\sigma^\frown\sel s\ser}=f_\sigma$.  The functions $\pi_{\sigma^\frown\sel s\ser}^n$ can be defined in the obvious way by copying over the definitions of the $\pi_\sigma^n$ where possible and, in $\pi_{\sigma^\frown\sel s\ser}^n$, mapping the extra levels to successors.  In particular, $\sup_n \pi_{\sigma^\frown\sel s\ser}^n(\sel\ser)<\sup_n\pi_{\sigma}^n(\sel\ser)$.

Otherwise, $\{\pi_{\sigma}^n(\sel\ser): n\in\Nb\}$ has a largest element and at least one $f_\sigma^{-1}(n)$ has the form $\bigcup_i\bigcap_j S_{i,j}^n$; then, again, we may assume that every $f_\sigma^{-1}(n)$ has this form (by replacing $\bigcup_i B_i$ with $\bigcup_i \bigcap_{j=1}^1 B_i$).  For each $s$ and $n$, we set
\[f_{\sigma^\frown\sel s\ser}^{-1}(n)=\bigcup_{t_1}\left[\bigcap_{m\leq s}S^n_{t_1,m}\cap\bigcap_{\sel u_0,u_1 \ser < \sel n,t_1 \ser}\bigcup_{m\leq s}\overline{S^{u_0}_{u_1,m}}\right].\]
That is, $f_{\sigma^\frown\sel s\ser}(x)$ is chosen by finding the smallest pair $\sel n,t_1 \ser$ such that $x\in\bigcap_{m\leq s}S^n_{t_1,m}$.  Such a pair $\sel n,t_1 \ser$ clearly exists and is unique, so $f_{\sigma^\frown\sel s\ser}$ also defines a unique Borel function.  Since, for each $x$, there is an $n$ so that, for some $t_1$, $x\in\bigcap_m S^n_{t_1,m}$, there is some large enough finite $s$ so that $s\not\in\bigcup_{\sel u_0,u_1 \ser < \sel n,t_1 \ser}\bigcap_{m\leq s}S^{u_0}_{u_1,m}$, and therefore $f_{\sigma^\frown\sel s\ser}(x)=f_\sigma(x)$; in particular, the functions $f_{\sigma^\frown\sel s\ser}$ converge pointwise to $f_\sigma$.

We need to define the functions $\pi_{\sigma^\frown\sel s\ser}^n$, and we have to do this a bit carefully to make sure that $\sup_n\pi_{\sigma^\frown\sel s\ser}^n(\sel\ser)<\sup_n\pi_\sigma^n(\sel\ser)$.  For simplicity, assume that each $S^n_{t_1,m}=\bigcup_j S^n_{t_1,m,j}$.  (In the cases where this fails, $S^n_{t_1,m}\in\mathcal{A}$, and the coding is similar but simpler.)  Then we have, for each $t_1$,
\[\bigcap_{m\leq s}S^n_{t_1,m}\cap\bigcap_{\sel u_0,u_1 \ser < \sel n,t_1 \ser}\bigcup_{m\leq s}\overline{S^{u_0}_{u_1,m}}=\bigcup_{j_1,\ldots,j_s}\left[\bigcap_{m\leq s}S^n_{t_1,m,j_m}\cap\bigcap_{\sel u_0,u_1 \ser < \sel n,t_1 \ser}\bigcup_{m\leq s}\overline{S^{u_0}_{u_1,m}}\right],\]
and using the closure of clean Borel codes under finite unions and intersections, the interior of the union is a single clean Borel code.  Note that $\pi_\sigma^n$ maps the node corresponding to each $S^{u_0}_{u_1,m}$ (or $S^n_{t_1,m}$) to a level at least three nodes below $\sup_n \pi_{\sigma^\frown\sel s\ser}^n(\sel\ser)$ (there must be a node above for $\bigcap_j$, then one for $\bigcup_i$, then one for the root of the Borel code).  By mapping to level-wise maxima, we may arrange for $\pi_{\sigma^\frown\sel s\ser}^n$ to map the root of this intersection to a level at least three below $\sup_n \pi_{\sigma^\frown\sel s\ser}^n(\sel\ser)$ as well.  Then we can map the union over $t_1,j_1,\ldots,j_s$ to the level immediately above that, and the root to the level above that.  In particular, $\pi^n_{\sigma^\frown\sel s\ser}(\sel\ser)+1\leq\sup_n\pi^n_\sigma(\sel\ser)$, and therefore $\sup_n\pi^n_{\sigma^\frown\sel s\ser}(\sel\ser)<\sup_n\pi^n_\sigma(\sel\ser)$.

Since ${\text{\sc{kb}}}(\Upsilon)$ is well-ordered, the map from $U$ to $\Upsilon$ given by $\sigma\mapsto\sup_n\pi^n_{\sigma}(\sel\ser)$ shows that $U$ is well-founded.  The leaves $\sigma$ of $U$ are functions where each $f^{-1}_\sigma(n)\in \mathcal{A}$, and since each open ball is clopen, each element of $\mathcal{A}$ is open, so $f^{-1}_\sigma$ is continuous.  We may replace each leaf with a code for $f_\sigma$ as a continuous function, and we have therefore obtained a representation of $f$ as a Baire function.
\end{proof}

\begin{Lemma}[$\atr$]\label{thm:borel_is_baire}
  Every Borel function from $\Nb^\Nb$ to itself is Baire.
\end{Lemma}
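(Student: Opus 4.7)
The plan is to reduce to Lemma \ref{thm:N_borel_is_baire} coordinate-wise and then recombine. Given a Borel function $f \colon \Nb^\Nb \to \Nb^\Nb$, for each $n$ consider the coordinate $f_n(x) := f(x)(n)$. Since the cylinder $\{y \in \Nb^\Nb : y(n) = k\}$ is clopen, the preimage $f_n^{-1}(\{k\}) = f^{-1}(\{y : y(n) = k\})$ is Borel with a code obtainable uniformly in $n$ and $k$ from the code of $f$; hence each $f_n \colon \Nb^\Nb \to \Nb$ is Borel, uniformly in $n$. The construction in the proof of Lemma \ref{thm:N_borel_is_baire} depends only on the Borel codes for preimages of singletons, so it can be carried out uniformly to produce a sequence $\langle \Xi_n \rangle_{n \in \Nb}$ of Baire codes for the $f_n$.

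To combine the $\Xi_n$ into a single Baire code $\Xi$ for $f$, introduce the approximations $F_m \colon \Nb^\Nb \to \Nb^\Nb$ defined by $F_m(x) = \langle f_0(x), \ldots, f_{m-1}(x), 0, 0, \ldots \rangle$. Once $m \geq k$ we have $F_m(x) \upharpoonright k = f(x) \upharpoonright k$, so $F_m \to f$ in the Baire-space metric. It therefore suffices to produce a uniform sequence of Baire codes $\langle \Pi_m \rangle_{m \in \Nb}$ with $\Pi_m$ coding $F_m$, and then to let $\Xi$ have a fresh root whose $m$-th child is $\Pi_m$.

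I will build $\Pi_m$ by transfinite recursion along the Kleene--Brouwer linearization of the joined tree $\bigoplus_{n < m} \Xi_n$, which is a well-order provably in $\aca$. The base case is when every $\Xi_n$ for $n < m$ is a leaf: then each $f_n$ is continuous, so $F_m$ is continuous and one takes the trivial Baire code. Otherwise, $f_n = \lim_s f_n^{(s)}$ where $f_n^{(s)}$ is coded by the $s$-th immediate subtree of $\Xi_n$ (taken as $f_n$ itself if $\Xi_n$ is already a leaf). Setting $F_m^{(s)}(x) = \langle f_0^{(s)}(x), \ldots, f_{m-1}^{(s)}(x), 0, 0, \ldots \rangle$, one has $F_m^{(s)} \to F_m$ pointwise, and recursively each $F_m^{(s)}$ has a Baire code obtained by applying the same construction to $\langle \Xi_n^{(s)} \rangle_{n < m}$, which becomes the $s$-th child of the root of $\Pi_m$.

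The hard part will be formalizing this recursion inside $\atr$: one must verify that producing $\Pi_m$ uniformly in $m$ and then assembling $\Xi$ is a legitimate transfinite recursion rather than merely an informal meta-induction. But this is exactly what $\atr$ supplies: the joined trees are well-founded by $\aca$, their Kleene--Brouwer orders are well-orders, and $\atr$ permits iterating an arithmetical recursion along each of them uniformly in $m$. The convergences $F_m^{(s)} \to F_m$ and $F_m \to f$ then guarantee that $\Xi$ codes $f$ as a Baire function.
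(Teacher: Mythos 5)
Your proof is correct, and it uses the same key ingredient as the paper (Lemma \ref{thm:N_borel_is_baire}), but the recombination step is genuinely different. The paper sidesteps your coordinate-by-coordinate decomposition: it forms the single function $f'\colon \Nb^\Nb\times\Nb\to\Nb$, $f'(\la\Lambda,n\ra)=f(\Lambda)(n)$, on the zero-dimensional space $\Nb^\Nb\times\Nb$ (distinct fibers at distance $2$), applies Lemma \ref{thm:N_borel_is_baire} \emph{once}, and observes that each continuous leaf $f'_\sigma$ of the resulting single Baire tree already handles all coordinates simultaneously, so $f_\sigma(\Lambda)=\la f'_\sigma(\la\Lambda,n\ra)\ra_{n\in\Nb}$ is continuous and the very same tree is a Baire code for $f$. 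That buys automatic synchronization: one well-founded tree serves every coordinate, so no interleaving is needed. Your route produces a separate tree $\Xi_n$ per coordinate and must synchronize them by hand, which is exactly your transfinite recursion building $\Pi_m$ over the simultaneous-descent positions of $\la\Xi_n\ra_{n<m}$. That recursion is sound---an infinite descent in the position tree would force an infinite path in some $\Xi_n$, and the limits $F_m^{(s)}\to F_m$ and $F_m\to f$ hold because convergence in $\Nb^\Nb$ only requires eventual agreement on each finite prefix, a finite conjunction of coordinate convergences---but it is the part you rightly flag as the hard formalization work, and it is precisely what the product-space trick makes unnecessary; you also need the uniform version of Lemma \ref{thm:N_borel_is_baire} (available because its proof is an explicit recursion), whereas the paper needs only a single instance.
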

\begin{proof}
First, given a Borel function $f$ coded by $\Upsilon$, let $f': \Nb^\Nb \times \Nb \to \Nb$ be the function which maps $\sel \Lambda,n\ser$ to the $n$-th position of $f(\Lambda)$.  We view $\Nb^\Nb \times \Nb$ as a metric space in which $d(\sel \Lambda,n\ser ,\sel \Lambda',n'\ser )=2$ iff $n\neq n'$ and $d(\sel \Lambda,n \ser ,\sel \Lambda',n\ser )=d(\Lambda,\Lambda')\leq 1$.
Then $f'$ is also Borel: the inverse image of $a$ is the union over all sets $U_{\Upsilon_{ \sel \sigma,2^{-|\sigma| }\ser }}\times\{n\}$, where $n\in\Nb$ and $\sigma\in\Nb^{<\Nb}$ such that $\sigma(n) = a$.

By the previous lemma, $f'$ is also Baire.  Taking the (continuous) leaves $f'_\sigma : \Nb^\Nb \times \Nb \to \Nb$, we can define $f_\sigma : \Nb^\Nb \to \Nb^\Nb$ by $f_\sigma(\Lambda)=\sel f'_\sigma(\sel \Lambda,n\ser) : n \in \Nb \ser$.  This function is still continuous and respects the limits of the $f'_\sigma$ sequence, so the same tree gives a representation of $f$ as a Baire function.
\end{proof}

\section*{Funding}
Fernández-Duque was supported by the FWO-FWF Lead Agency grant G030620N (FWO)\slash I4513N (FWF).
Shafer was supported by the \emph{Fonds voor Wetenschappelijk Onderzoek -- Vlaanderen} Pegasus program, the John Templeton Foundation grant ID 60842 \emph{A new dawn of intuitionism: mathematical and philosophical advances} and by EPSRC grant EP/T031476/1 \emph{Reverse mathematics of general topology}.  The opinions expressed in this work are those of the authors and do not necessarily reflect the views of the John Templeton Foundation.
Towsner was partially supported by NSF grant DMS-2054379.
Yokoyama was partially supported by  JSPS KAKENHI grant number 19K03601 and 21KK0045.

\section*{Acknowledgments}

David Fernández-Duque would like to thank Carlos Bosch Giral, César Luis García García, Claudia Gómez Wulschner, Rigoberto Vera Mendoza, and the other attendees of the ITAM Analysis seminar, for introducing him to Caristi's theorem and for many fruitful discussions.

\begin{bibdiv}
\begin{biblist}

\bib{Caristi}{article}{
      author={Caristi, J.},
       title={Fixed point theorems for mappings satisfying inwardness
  conditions},
        date={1976},
     journal={Transactions of the American Mathematical Society},
      volume={215},
       pages={241\ndash 251},
}

\bib{DFSW}{article}{
      author={Dzhafarov, D.},
      author={Flood, S.},
      author={Solomon, R.},
      author={Westrick, L.},
       title={Effectiveness for the dual {R}amsey theorem},
        date={2021},
        ISSN={0029-4527},
     journal={Notre Dame J. Form. Log.},
      volume={62},
      number={3},
       pages={455\ndash 490},
         url={https://doi.org/10.1215/00294527-2021-0024},
      review={\MR{4323042}},
}

\bib{ekeland1974}{article}{
      author={Ekeland, I.},
       title={On the variational principle},
        date={1974},
     journal={Journal of Mathematical Analysis and Applications},
      volume={47},
      number={2},
       pages={324\ndash 353},
}

\bib{EkelandSelecta}{article}{
      author={Fernández-Duque, D.},
      author={Shafer, P.},
      author={Yokoyama, K.},
       title={Ekeland's variational principle in weak and strong systems of
  arithmetic},
        date={2020},
     journal={Selecta Mathematica},
      volume={26},
      number={68},
}

\bib{Freund2020}{article}{
      author={Freund, A.},
       title={What is effective transfinite recursion in reverse mathematics?},
        date={2020},
        ISSN={0942-5616},
     journal={MLQ Math. Log. Q.},
      volume={66},
      number={4},
       pages={479\ndash 483},
         url={https://doi.org/10.1002/malq.202000042},
      review={\MR{4267051}},
}

\bib{CaristiApps}{article}{
      author={Khojasteh, F.},
      author={Karapinar, E.},
      author={Khandani, H.},
       title={Some applications of Caristi’s fixed point theorem in metric
  spaces},
        date={2016},
     journal={Fixed Point Theory and Applications},
      volume={16},
      number={1},
}

\bib{Kirk2003}{article}{
      author={Kirk, W.~A.},
       title={Transfinite methods in metric fixed-point theory},
        date={2003},
     journal={Abstract and Applied Analysis},
      volume={2003},
      number={5},
       pages={311\ndash 324},
}

\bib{MR1428011}{incollection}{
      author={Marcone, A.},
       title={On the logical strength of {N}ash-{W}illiams' theorem on
  transfinite sequences},
        date={1996},
   booktitle={Logic: from foundations to applications ({S}taffordshire, 1993)},
      series={Oxford Sci. Publ.},
   publisher={Oxford Univ. Press, New York},
       pages={327\ndash 351},
      review={\MR{1428011}},
}

\bib{Peng2017}{article}{
      author={Peng, W.},
      author={Yamazaki, T.},
       title={Two kinds of fixed point theorems and reverse mathematics},
        date={2017},
     journal={Mathematical Logic Quarterly},
      volume={63},
      number={5},
       pages={454\ndash 461},
}

\bib{PriessCrampe2000}{article}{
      author={Priess-Crampe, S.},
      author={Ribenboim, P.},
       title={Ultrametric spaces and logic programming},
        date={2000},
        ISSN={0743-1066},
     journal={The Journal of Logic Programming},
      volume={42},
      number={2},
       pages={59\ndash 70},
}

\bib{priess-crampe2011}{article}{
      author={Priess-Crampe, S.},
      author={Ribenboim, P.},
       title={Ultrametric dynamics},
        date={2011},
     journal={Illinois J. Math.},
      volume={55},
      number={1},
       pages={287\ndash 303},
         url={https://doi.org/10.1215/ijm/1355927037},
}

\bib{SavateevS21}{article}{
      author={Savateev, Y.},
      author={Shamkanov, D.~S.},
       title={Non-well-founded proofs for the Grzegorczyk modal logic},
        date={2021},
     journal={Rev. Symb. Log.},
      volume={14},
      number={1},
       pages={22\ndash 50},
         url={https://doi.org/10.1017/S1755020319000510},
}

\bib{SimpsonSOSOA}{book}{
      author={Simpson, S.~G.},
       title={{Subsystems of Second Order Arithmetic}},
     edition={Second},
      series={Perspectives in Logic},
   publisher={Cambridge University Press, Cambridge; Association for Symbolic
  Logic, Poughkeepsie, NY},
        date={2009},
        ISBN={978-0-521-88439-6},
         url={http://dx.doi.org/10.1017/CBO9780511581007},
      review={\MR{2517689 (2010e:03073)}},
}

\bib{Towsner2013}{article}{
      author={Towsner, H.},
       title={Partial impredicativity in reverse mathematics},
        date={2013},
        ISSN={0022-4812},
     journal={J. Symbolic Logic},
      volume={78},
      number={2},
       pages={459\ndash 488},
         url={http://projecteuclid.org/euclid.jsl/1368627060},
      review={\MR{3145191}},
}

\bib{MR1197207}{article}{
      author={Yu, X.},
       title={Riesz representation theorem, {B}orel measures and subsystems of
  second-order arithmetic},
        date={1993},
        ISSN={0168-0072},
     journal={Ann. Pure Appl. Logic},
      volume={59},
      number={1},
       pages={65\ndash 78},
         url={http://dx.doi.org/10.1016/0168-0072(93)90232-3},
      review={\MR{1197207}},
}

\end{biblist}
\end{bibdiv}
 
\end{document}